\documentclass[11pt]{article}

\voffset=-2cm \hoffset=-1in \textwidth=17cm \textheight=227mm

\usepackage{amsmath, amssymb, amsthm, calc}  %, russian}

\title{Intermediate Diophantine exponents \\ and \\ parametric geometry of numbers.
                              \thanks{ This research was  supported by
                              RFBR (grant $\textup N^{\circ}$ 09--01--00371a) and
                              by the grant of the President of Russian Federation
                              $\textup N^\circ$ MK--1226.2010.1.
                             }}
\author{Oleg\,N.\,German}
\date{}

\theoremstyle{definition}
\newtheorem{definition}{Definition}

\theoremstyle{remark}
\newtheorem{remark}{Remark}

\theoremstyle{plain}
\newtheorem{theorem}{Theorem}
\newtheorem{lemma}{Lemma}

\newtheorem{proposition}{Proposition}
\newtheorem{corollary}{Corollary}

\newtheorem{classic}{Theorem}
\newtheorem{classicprime}[classic]{Theorem}

\DeclareMathOperator{\spanned}{span}

\renewcommand{\vec}[1]{\mathbf{#1}}
\renewcommand{\geq}{\geqslant}
\renewcommand{\leq}{\leqslant}
\renewcommand{\phi}{\varphi}

\newcommand{\R}{\mathbb{R}}
\newcommand{\Z}{\mathbb{Z}}

\newcommand{\La}{\Lambda}

\newcommand{\bpsi}{\underline{\psi}}
\newcommand{\apsi}{\overline{\psi}}
\newcommand{\bPsi}{\underline{\Psi}}
\newcommand{\aPsi}{\overline{\Psi}}

\newcommand{\cL}{\mathcal{L}}

\newcommand{\cB}{\mathcal{B}}

\newcommand{\cJ}{\mathcal{J}}
\newcommand{\cP}{\mathcal{P}}
\newcommand{\cQ}{\mathcal{Q}}

\newcommand{\gT}{\mathfrak{T}}
\newcommand{\ga}{\mathfrak{a}}
\newcommand{\gb}{\mathfrak{b}}

\newcommand{\tr}[1]{{#1}^\intercal}

\begin{document}

  \maketitle

  \begin{abstract}
    In this paper we develop some of the ideas belonging to W.~Schmidt and L.~Summerer to define intermediate Diophantine exponents and split several transference inequalities into a chain of inequalities for intermediate exponents.
  \end{abstract}

  \section{Introduction}

  Given a matrix
  \[ \Theta=
     \begin{pmatrix}
       \theta_{11} & \cdots & \theta_{1m} \\
       \vdots & \ddots & \vdots \\
       \theta_{n1} & \cdots & \theta_{nm}
     \end{pmatrix},\qquad
     \theta_{ij}\in\R,\quad n+m\geq3, \]
  consider the system of linear equations
  \begin{equation} \label{eq:the_system}
    \Theta\vec x=\vec y
  \end{equation}
  with variables $\vec x\in\R^m$, $\vec y\in\R^n$.
  The classical measure of how well the space of solutions to this system can be approximated by integer points is defined as follows. Let $|\cdot|$ denote the sup-norm in the corresponding space.

  \begin{definition} \label{def:belpha_1}
    The supremum of the real numbers $\gamma$, such that there are arbitrarily large values of $t$ for which (resp. such that for every $t$ large enough) the system of inequalities
    \begin{equation} \label{eq:belpha_1_definition}
      |\vec x|\leq t,\qquad|\Theta\vec x-\vec y|\leq t^{-\gamma}
    \end{equation}
    has a nonzero solution in $(\vec x,\vec y)\in\Z^m\oplus\Z^n$, is called the \emph{regular} (resp. \emph{uniform}) \emph{Diophantine exponent} of $\Theta$ and is denoted by $\beta_1$ (resp. $\alpha_1$).
  \end{definition}

  This paper is a result of the attempt to generalize this concept to the case of the problem of approximating the space of solutions to \eqref{eq:the_system} by $p$-dimensional rational subspaces of $\R^{m+n}$. A large work in this direction was made by W.~Schmidt in \cite{schmidt_annals}. Later, in \cite{laurent_up_down}, \cite{bugeaud_laurent_up_down}, a corresponding definition was given by M.~Laurent and Y.~Bugeaud in the case when $m=1$. With their definition they were able to split the classical Khintchine transference principle into a chain of inequalities for intermediate exponents. However, the way we defined $\alpha_1$ and $\beta_1$ naturally proposes a generalization, which appears to be different from Laurent's:

  \begin{definition} \label{def:belpha_p}
    The supremum of the real numbers $\gamma$, such that there are arbitrarily large values of $t$ for which (resp. such that for every $t$ large enough) the system of inequalities
    \begin{equation} \label{eq:belpha_p_definition}
      |\vec x|\leq t,\qquad|\Theta\vec x-\vec y|\leq t^{-\gamma}
    \end{equation}
    has $p$ solutions $\vec z_i=(\vec x_i,\vec y_i)\in\Z^m\oplus\Z^n$, $i=1,\ldots,p$, linearly independent over $\Z$, is called the \emph{$p$-th regular (resp. uniform) Diophantine exponent of the first type} of $\Theta$ and is denoted by $\beta_p$ (resp. $\alpha_p$).
  \end{definition}

  In Section \ref{sec:laurexp} we propose a definition of intermediate exponents of the second type, which is consistent with Laurent's. In Sections \ref{sec:known}, \ref{sec:results} we formulate our main results for these quantities. Sections \ref{sec:schmex}, \ref{sec:multi_schmex} are devoted to the exponents naturally emerging in parametric geometry of numbers developed by W.~Schmidt and L.~Summerer in \cite{schmidt_summerer}. Those exponents are closely connected to the Diophantine exponents, and in Sections \ref{sec:ex_via_schmex}, \ref{sec:transposed} we describe this connection. It allows to reformulate our main results in terms of Schmidt--Summerer's exponents, which is accomplished in Section \ref{sec:results_schmidted}. Finally, in Section \ref{sec:proofs}, we use this point of view to prove Theorems given in Section \ref{sec:results}.

  It should be noticed that all our ``splitting'' results are obtained for the exponents of the second type. It is an interesting question whether anything of this kind can be done with the exponents of the first type.

  \section{Laurent's exponents and their generalization} \label{sec:laurexp}

  Set $d=m+n$. Let us denote by $\pmb\ell_1,\ldots,\pmb\ell_d$ the columns of the matrix
  \[ \begin{pmatrix}
       E_m & -\tr\Theta\\
       \Theta & E_n
     \end{pmatrix}, \]
  where $E_m$ and $E_n$ are the corresponding unity matrices and $\tr\Theta$ is the transpose of $\Theta$. Clearly, $\cL=\spanned_\R(\pmb\ell_1,\ldots,\pmb\ell_m)$ is the space of solutions to the system \eqref{eq:the_system}, and $\cL^\bot=\spanned_\R(\pmb\ell_{m+1},\ldots,\pmb\ell_d)$. Denote also by $\vec e_1,\ldots,\vec e_d$ the columns of the $d\times d$ unity matrix $E_d$.

  The following Definition is a slightly modified Laurent's one.

  \begin{definition} \label{def:ba_for_m_equal_to_1}
    Let $m=1$. The supremum of the real numbers $\gamma$, such that there are arbitrarily large values of $t$ for which (resp. such that for every $t$ large enough) the system of inequalities
    \begin{equation} \label{eq:ba_for_m_equal_to_1}
      |\vec Z|\leq t,\qquad|\pmb\ell_1\wedge\vec Z|\leq t^{-\gamma}
    \end{equation}
    has a nonzero solution in $\vec Z\in\wedge^p(\Z^d)$ is called the \emph{$p$-th regular (resp. uniform) Diophantine exponent of the second type} of $\Theta$ and is denoted by $\gb_p$ (resp. $\ga_p$).
  \end{definition}

  Here $\vec Z\in\wedge^p(\R^d)$, $\pmb\ell_1\wedge\vec Z\in\wedge^{p+1}(\R^d)$ and for each $q$ we consider $\wedge^q(\R^d)$ as a $\binom dq$-dimensional Euclidean space with the orthonormal basis consisting of the multivectors
  \[ \vec e_{i_1}\wedge\ldots\wedge\vec e_{i_q},\qquad 1\leq i_1<\ldots<i_q\leq d, \]
  and denote by $|\cdot|$ the sup-norm with respect to this basis.

  Laurent denoted the exponents $\gb_p$, $\ga_p$ as $\omega_{p-1}$, $\hat\omega_{p-1}$, respectively, and showed that for $p=1$ they coincide with $\beta_1$, $\alpha_1$. He also noticed that one does not have to require $\vec Z$ to be decomposable in Definition \ref{def:ba_for_m_equal_to_1}, which essentially simplifies working in $\wedge^p(\R^d)$.

  In order to generalize Definition \ref{def:ba_for_m_equal_to_1} let us set for each $\sigma=\{i_1,\ldots,i_k\}$, $1\leq i_1<\ldots<i_k\leq d$,
  \begin{equation} \label{eq:L_sigma}
    \vec L_\sigma=\pmb\ell_{i_1}\wedge\ldots\wedge\pmb\ell_{i_k},
  \end{equation}
  denote by $\cJ_k$ the set of all the $k$-element subsets of $\{1,\ldots,m\}$, $k=0,\ldots,m$, and set $\vec L_\varnothing=1$.

  Let us also set $k_0=\max(0,m-p)$.

  \begin{definition} \label{def:ba}
    The supremum of the real numbers $\gamma$, such that there are arbitrarily large values of $t$ for which (resp. such that for every $t$ large enough) the system of inequalities
    \begin{equation} \label{eq:ba}
      \max_{\sigma\in\cJ_k}|\vec L_\sigma\wedge\vec Z|\leq t^{1-(k-k_0)(1+\gamma)},\qquad k=0,\ldots,m,
    \end{equation}
    has a nonzero solution in $\vec Z\in\wedge^p(\Z^d)$ is called the \emph{$p$-th regular (resp. uniform) Diophantine exponent of the second type} of $\Theta$ and is denoted by $\gb_p$ (resp. $\ga_p$).
  \end{definition}

  We tended to make Definition \ref{def:ba} look as simple as possible. However, it will be more convenient to work with in the multilinear algebra setting after it is slightly reformulated. To give the desired reformulation let us set for each $\sigma=\{i_1,\ldots,i_k\}$, $1\leq i_1<\ldots<i_k\leq d$,
  \begin{equation} \label{eq:E_sigma}
    \vec E_\sigma=\vec e_{i_1}\wedge\ldots\wedge\vec e_{i_k},
  \end{equation}
  denote by $\cJ'_k$ the set of all the $k$-element subsets of $\{m+1,\ldots,d\}$, $k=0,\ldots,n$, and set $\vec E_\varnothing=1$.

  Set also $k_1=\min(m,d-p)$.

  \begin{proposition} \label{prop:ba_substitution}
    The inequalities \eqref{eq:ba} can be substituted by
    \begin{equation} \label{eq:ba_modified}
      \max_{\begin{subarray}{c} \sigma\in\cJ_k \\ \sigma'\in\cJ'_{d-p-k} \end{subarray}}
      |\vec L_\sigma\wedge\vec E_{\sigma'}\wedge\vec Z|\leq t^{1-(k-k_0)(1+\gamma)},\qquad k=k_0,\ldots,k_1.
    \end{equation}
  \end{proposition}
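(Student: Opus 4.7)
The plan is to show that \eqref{eq:ba} and \eqref{eq:ba_modified} are equivalent up to a multiplicative constant depending only on $\Theta$, $m$, $n$, $p$; since $\gb_p$ and $\ga_p$ are defined as suprema of asymptotic conditions on $t$, such constants do not affect them.

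First I reinterpret \eqref{eq:ba_modified} as a bound on coefficients. Writing $\vec L_\sigma \wedge \vec Z = \sum_{|\tau|=k+p} c_{\sigma,\tau}\,\vec E_\tau$ in the standard basis, the top-degree wedge $\vec L_\sigma \wedge \vec E_{\sigma'} \wedge \vec Z$ equals $\pm\, c_{\sigma,\sigma'^c}\,\vec e_1\wedge\cdots\wedge\vec e_d$ for $\sigma' \in \cJ'_{d-p-k}$, where $\sigma'^c$ automatically contains $\{1,\ldots,m\}$ because $\sigma' \subset \{m+1,\ldots,d\}$. Thus \eqref{eq:ba_modified} controls precisely those coefficients $c_{\sigma,\tau}$ of $\vec L_\sigma \wedge \vec Z$ whose index set $\tau$ contains $\{1,\ldots,m\}$, and each such coefficient is dominated by $|\vec L_\sigma \wedge \vec Z|$. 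This gives the implication \eqref{eq:ba} $\Rightarrow$ \eqref{eq:ba_modified} for free.

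For the converse I bound the remaining coefficients (those with $\tau \not\supset \{1,\ldots,m\}$) using the identity $\vec e_i = \pmb\ell_i - \sum_{j=1}^n \theta_{ji}\,\vec e_{m+j}$ for $i\leq m$. Splitting $\tau^c = \tau^c_1 \sqcup \tau^c_2$ with $\tau^c_1 \subset \{1,\ldots,m\}$ and $\tau^c_2 \subset \{m+1,\ldots,d\}$, I expand
\[ \vec E_{\tau^c_1} = \sum_{S\subset\tau^c_1} \pm\,\vec L_S \wedge \vec W_{\tau^c_1\setminus S}, \]
where each $\vec W_T$ is a multivector in $\spanned(\vec e_{m+1},\ldots,\vec e_d)$ whose sup-norm is bounded in terms of $\Theta$. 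Using $\vec L_\sigma \wedge \vec L_S = 0$ whenever $S\cap\sigma\neq\emptyset$, this expresses $\pm c_{\sigma,\tau}\,\vec e_1\wedge\cdots\wedge\vec e_d = \vec L_\sigma\wedge\vec Z\wedge\vec E_{\tau^c}$ as a bounded linear combination of top forms $\vec L_{\sigma\cup S}\wedge\vec E_{\sigma'}\wedge\vec Z$ with $k':=k+|S|$ and $\sigma'\in\cJ'_{d-p-k'}$. The factor $\vec W_{\tau^c_1\setminus S}\wedge\vec E_{\tau^c_2}$ lives in a wedge power of an $n$-dimensional space, so it vanishes unless $0\leq d-p-k'\leq n$, i.e.\ $k'\in[k_0,k_1]$; every surviving summand is therefore controlled by \eqref{eq:ba_modified}.

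For $\gamma>-1$ the exponent $1-(k+|S|-k_0)(1+\gamma)$ is decreasing in $|S|$, so the dominant term in the resulting sum corresponds to the smallest admissible $|S|=\max(0,k_0-k)$. This yields $|c_{\sigma,\tau}|\leq C\,t^{1-\max(k-k_0,0)(1+\gamma)}\leq C\,t^{1-(k-k_0)(1+\gamma)}$; the last inequality is equality for $k\geq k_0$ and trivial for $k<k_0$, since then the right-hand side is $\geq t$ while the left-hand side is $\leq Ct$. Taking the maximum over $\tau$ recovers \eqref{eq:ba} up to the constant $C$, which can be absorbed by an arbitrarily small perturbation of $\gamma$ for $t$ large; the levels $k>k_1$ of \eqref{eq:ba} are vacuous since $\vec L_\sigma\wedge\vec Z$ is then of degree exceeding $d$ and so vanishes. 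I expect the main technical step to be the dimensional bookkeeping showing that only $k'\in[k_0,k_1]$ contribute nontrivially, which follows cleanly from the above vanishing property of $\vec W_T\wedge\vec E_{\tau^c_2}$.
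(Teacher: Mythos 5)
Your proof is correct, but it is organized differently from the paper's. The paper passes to the sup-norm $|\cdot|_\Theta$ adapted to the basis $\pmb\ell_1,\ldots,\pmb\ell_m,\vec e_{m+1},\ldots,\vec e_d$, expands $\vec Z$ itself in the induced basis $\{\vec L_\rho\wedge\vec E_{\rho'}\}$ of $\wedge^p(\R^d)$, and observes that both \eqref{eq:ba} and \eqref{eq:ba_modified} then become explicit bounds on the coordinates $Z_{\rho,\rho'}$, so that the equivalence reduces to discarding the vacuous levels $k>k_1$ and, for each coordinate, keeping only the binding inequality (the one with the largest admissible $k$). You instead stay in the standard basis throughout: reading \eqref{eq:ba_modified} as control of the standard coefficients $c_{\sigma,\tau}$ of $\vec L_\sigma\wedge\vec Z$ with $\tau\supset\{1,\ldots,m\}$ makes the implication \eqref{eq:ba}$\,\Rightarrow$\eqref{eq:ba_modified} immediate, and you recover the remaining coefficients by expanding the complementary test multivectors $\vec E_{\tau^c}$ via $\vec e_i=\pmb\ell_i-\sum_j\theta_{ji}\vec e_{m+j}$ --- a dual use of the same change of basis. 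Your dimension count restricting the surviving levels to $k'\in[k_0,k_1]$ and the monotonicity of the exponent in $|S|$ for $\gamma>-1$ play exactly the roles of the paper's two pruning steps, so the mathematical content is the same; what the paper's version buys is brevity (one norm-equivalence remark replaces your explicit constants), while yours keeps the original norm and makes one direction visibly trivial. Two small points of hygiene: at the level $k=k_0$ the exponent is identically $1$, so the constant $C$ cannot be absorbed by perturbing $\gamma$ alone --- you should also rescale $t$ to $Ct$ (harmless for the suprema); and the restriction to $\gamma>-1$ deserves the paper's one-line justification that both systems are solvable for all $\gamma\leq-1$ and all large $t$ by Minkowski's theorem, so neither supremum is affected.
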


  \begin{proof}
    Since $\pmb\ell_1,\ldots,\pmb\ell_m,\vec e_{m+1},\ldots,\vec e_d$ form a basis of $\R^d$, for each $q=1,\dots,d$ the multivectors
    \[ \vec L_\rho\wedge\vec E_{\rho'},\qquad\rho\in\cJ_j,\ \rho'\in\cJ'_{q-j},\ \max(0,q-n)\leq j\leq\min(q,m), \]
    form a basis of $\wedge^q(\R^d)$. Let us denote by $|\cdot|_\Theta$ the sup-norm in each $\wedge^q(\R^d)$ with respect to such a basis. Since any two norms in a Euclidean space are equivalent, and since in Definition \ref{def:ba} we are concerned only about exponents, we can substitute \eqref{eq:ba} by
    \begin{equation} \label{eq:ba_Thetanized}
      \max_{\sigma\in\cJ_k}|\vec L_\sigma\wedge\vec Z|_\Theta\leq t^{1-(k-k_0)(1+\gamma)},\qquad k=0,\ldots,m,
    \end{equation}
    and \eqref{eq:ba_modified} by
    \begin{equation} \label{eq:ba_modified_Thetanized}
      \max_{\begin{subarray}{c} \sigma\in\cJ_k \\ \sigma'\in\cJ'_{d-p-k} \end{subarray}}
      |\vec L_\sigma\wedge\vec E_{\sigma'}\wedge\vec Z|_\Theta\leq t^{1-(k-k_0)(1+\gamma)},\qquad k=k_0,\ldots,k_1.
    \end{equation}
    Writing
    \[ \vec Z=\sum_{j=\max(0,p-n)}^{\min(p,m)}\sum_{\begin{subarray}{c} \rho\in\cJ_j \\ \rho'\in\cJ'_{p-j} \end{subarray}}
       Z_{\rho,\rho'}\vec L_\rho\wedge\vec E_{\rho'}, \]
    we see that \eqref{eq:ba_Thetanized} for each $k$ means exactly that
    \[ \ Z_{\rho,\rho'}=0,\qquad\qquad\qquad\quad\ \text{ if }\rho\in\cJ_j,\ j>m-k, \]
    \begin{equation} \label{eq:coordinates_filtered}
      |Z_{\rho,\rho'}|\leq t^{1-(k-k_0)(1+\gamma)},\qquad\text{ if }\rho\in\cJ_j,\ j\leq m-k.
    \end{equation}
    Hence we see that all the inequalities in \eqref{eq:ba_Thetanized} with $k>k_1$ are trivial. Next, since we are concerned about large values of $t$, by Minkowski's first convex body theorem we may confine ourselves to considering only positive values of $1+\gamma$. Then the function $t^{1-(k-k_0)(1+\gamma)}$ is non-increasing with respect to $k$, so for each $\rho\in\cJ_j$ of all the inequalities \eqref{eq:coordinates_filtered} we may keep the ones with the largest $k$, i.e. with the one equal to $m-j$. Thus, \eqref{eq:ba_Thetanized} becomes equivalent to
    \begin{equation} \label{eq:coordinates_graduated}
      |Z_{\rho,\rho'}|\leq t^{1-(k-k_0)(1+\gamma)},\qquad\text{ if }\rho\in\cJ_{m-k},\ k_0\leq k\leq k_1.\phantom{,\ \rho'\in\cJ'_{p-m+k}}
    \end{equation}
    On the other hand, \eqref{eq:ba_modified_Thetanized} means that
    \begin{equation} \label{eq:coordinates_sandwiched}
      |Z_{\rho,\rho'}|\leq t^{1-(k-k_0)(1+\gamma)},\qquad\text{ if }\rho\in\cJ_{m-k},\ \rho'\in\cJ'_{p-m+k},\ k_0\leq k\leq k_1,
    \end{equation}
    which is obviously equivalent to \eqref{eq:coordinates_graduated}.
  \end{proof}

  For $p=1$ Definition \ref{def:ba} coincides with Definition \ref{def:belpha_1}, i.e. $\beta_1=\gb_1$ and $\alpha_1=\ga_1$. It is seen from the following

  \begin{proposition}
    The quantity $\beta_1$ (resp. $\alpha_1$) equals the supremum of the real numbers $\gamma$, such that there are arbitrarily large values of $t$ for which (resp. such that for every $t$ large enough)
%    there is a one-dimensional rational subspace $X$ of $\R^{m+n}$ satisfying the inequalities
    the system of inequalities
    \begin{equation} \label{eq:belpha_1_prop}
      |\vec z|\leq t,\qquad|\vec L\wedge\vec z|\leq t^{-\gamma},
    \end{equation}
    where $\vec L=\pmb\ell_1\wedge\ldots\wedge\pmb\ell_m$,
    has a nonzero solution in $\vec z\in\Z^d$.
  \end{proposition}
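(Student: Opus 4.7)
The plan is to show that for any $\vec z=(\vec x,\vec y)\in\Z^d$ the system \eqref{eq:belpha_1_prop} and the system \eqref{eq:belpha_1_definition} are equivalent up to multiplicative constants that depend only on $\Theta$. Since $\beta_1$ and $\alpha_1$ are defined as suprema of admissible exponents $\gamma$ as $t\to\infty$, such constants affect neither quantity, so this equivalence suffices.

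First I would compare $|\vec z|$ with $|\vec x|$. Trivially $|\vec x|\leq|\vec z|$. Conversely, writing $\vec y=\Theta\vec x-(\Theta\vec x-\vec y)$ and estimating componentwise shows that $|\vec x|\leq t$ together with $|\Theta\vec x-\vec y|\leq t^{-\gamma}$ implies $|\vec y|\leq C_1(\Theta)\,t+t^{-\gamma}\leq C_2(\Theta)\,t$ for $t$ large (we can restrict to $\gamma\geq 0$, since the suprema are at least $m/n>-1$ by Minkowski). So the first inequality in each system is equivalent to the first inequality in the other, up to a constant depending on $\Theta$.

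Next I would relate $|\vec L\wedge\vec z|$ to $|\Theta\vec x-\vec y|$. Since $\pmb\ell_1,\ldots,\pmb\ell_d$ form a basis of $\R^d$, expand $\vec z=\sum_{i=1}^{d}c_i\pmb\ell_i$. Then $\vec L\wedge\pmb\ell_i=0$ for $i\leq m$, hence
\[
  \vec L\wedge\vec z\;=\;\sum_{j=1}^{n}c_{m+j}\,\pmb\ell_1\wedge\cdots\wedge\pmb\ell_m\wedge\pmb\ell_{m+j}.
\]
The $n$ multivectors on the right are $(m+1)$-blades built from $m+1$ vectors of the basis $\pmb\ell_1,\ldots,\pmb\ell_d$, so they are linearly independent in $\wedge^{m+1}(\R^d)$. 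Consequently $|\vec L\wedge\vec z|\asymp\max_{1\leq j\leq n}|c_{m+j}|$, with constants depending only on $\Theta$.

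It then remains to express the $c_{m+j}$ through $\Theta\vec x-\vec y$. Reading off the first $m$ and the last $n$ coordinates in $\vec z=\sum c_i\pmb\ell_i$ using the block structure of the matrix whose columns are the $\pmb\ell_i$, one gets $\vec x=\vec c_1-\tr\Theta\vec c_2$ and $\vec y=\Theta\vec c_1+\vec c_2$, where $\vec c_1=(c_1,\ldots,c_m)$ and $\vec c_2=(c_{m+1},\ldots,c_{m+n})$. Eliminating $\vec c_1$ yields $\vec c_2=-(E_n+\Theta\tr\Theta)^{-1}(\Theta\vec x-\vec y)$, and since the inverted matrix depends only on $\Theta$, $\max_j|c_{m+j}|\asymp|\Theta\vec x-\vec y|$. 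Combining the last two steps gives $|\vec L\wedge\vec z|\asymp|\Theta\vec x-\vec y|$; together with the comparison of $|\vec z|$ and $|\vec x|$ this establishes the claimed equivalence of the two systems, and hence the equality of the exponents. The only non-routine step is the multilinear identification of $|\vec L\wedge\vec z|$ with $\max_j|c_{m+j}|$; the remainder is elementary linear algebra.
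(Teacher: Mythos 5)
Your argument is correct and follows essentially the same route as the paper: both proofs show that the parallelepipeds in $\R^d$ defined by \eqref{eq:belpha_1_definition} and by \eqref{eq:belpha_1_prop} are comparable up to a multiplicative constant depending only on $\Theta$ (for $\gamma\geq0$ and $t$ large), so the suprema of admissible exponents coincide. The only cosmetic difference is in how the key estimate $|\vec L\wedge\vec z|\asymp|\Theta\vec x-\vec y|$ is obtained: the paper uses the orthogonality of $\pmb\ell_{m+1},\ldots,\pmb\ell_d$ to $\cL$ together with the volume interpretation of the wedge norm, whereas you expand $\vec z$ in the basis $\pmb\ell_1,\ldots,\pmb\ell_d$ and invert $E_n+\Theta\tr\Theta$ explicitly.
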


  \begin{proof}
    The parallelepiped in $\R^d$ defined by \eqref{eq:belpha_1_definition} can be written as
    \begin{equation*}
      M_\gamma(t)=\Big\{ \vec z\in\R^d \,\Big|\,
      \max_{1\leq j\leq m}|\langle\vec e_j,\vec z\rangle|\leq t,\
      \max_{1\leq i\leq n}|\langle\pmb\ell_{m+i},\vec z\rangle|\leq t^{-\gamma}
      \Big\},
    \end{equation*}
    where $\langle\,\cdot\,,\cdot\,\rangle$ denotes the inner product in $\R^d$.

    The vectors $\pmb\ell_{m+1},\ldots,\pmb\ell_d$ form a basis of the orthogonal complement of $\cL$. Therefore, since the Euclidean norm of $\vec L\wedge\vec z$ equals the $(m+1)$-dimensional volume of the parallelepiped spanned by $\pmb\ell_1,\ldots,\pmb\ell_m,\vec z$, we have
    \[ |\vec L\wedge\vec z|\asymp\max_{1\leq i\leq n}|\langle\pmb\ell_{m+i},\vec z\rangle|, \]
    with the implied constant depending only on $\Theta$. Besides that,
    \[ |\vec z|\asymp\max\Big(\max_{1\leq j\leq m}|\langle\vec e_j,\vec z\rangle|,\,\max_{1\leq i\leq n}|\langle\pmb\ell_{m+i},\vec z\rangle|\ \Big) \]
    where the implied constant again depends only on $\Theta$.

    Hence there is a positive constant $c$ depending only on $\Theta$, such that the set $M'_\gamma(t)$ defined by \eqref{eq:belpha_1_prop} satisfies the relation
    \[ c^{-1}M_\gamma(t)\subseteq M'_\gamma(t)\subseteq cM_\gamma(t), \]
    at least for $t\geq1$, $\gamma\geq0$. Which immediately implies the desired result.
  \end{proof}

  \section{Known transference inequalities} \label{sec:known}

  The \emph{transference principle} connects the problem of approximating the space of solutions to \eqref{eq:the_system} to the analogous problem for the system
  \begin{equation} \label{eq:transposed_system}
    \tr\Theta\vec y=\vec x,
  \end{equation}
  where $\tr\Theta$ denotes the transpose of $\Theta$. Let us denote the intermediate Diophantine exponents corresponding to $\tr\Theta$ by $\beta_p^\ast$, $\alpha_p^\ast$, $\gb_p^\ast$, $\ga_p^\ast$.

  The classical transference inequalities estimating $\gb_1$ in terms of $\gb_1^\ast$, and $\ga_1$ in terms of $\ga_1^\ast$ belong to A.~Ya.~Khintchine, V.~Jarn\'{\i}k, F.~Dyson, and A.~Apfelbeck. We remind that, as we showed in the end of the previous Section, $\beta_1=\gb_1$, $\alpha_1=\ga_1$, $\beta_1^\ast=\gb_1^\ast$, $\alpha_1^\ast=\ga_1^\ast$.

  \subsection{Regular exponents}

  In \cite{khintchine_palermo} A.~Ya.~Khintchine proved for $m=1$ his famous transference inequalities
  \begin{equation} \label{eq:khintchine_transference}
    \gb_1^\ast\geq n\gb_1+n-1,\qquad
    \gb_1\geq\frac{\gb_1^\ast}{(n-1)\gb_1^\ast+n}\,,
  \end{equation}
  which were generalized later by F.~Dyson \cite{dyson}, who proved that for arbitrary $n$, $m$
  \begin{equation} \label{eq:dyson_transference}
    \gb_1^\ast\geq\frac{n\gb_1+n-1}{(m-1)\gb_1+m}\,.
  \end{equation}
  While \eqref{eq:khintchine_transference} cannot be improved (see \cite{jarnik_1936_1}, \cite{jarnik_1936_2}) if only $\gb_1$ and $\gb_1^\ast$ are considered, stronger inequalities can be obtained if $\ga_1$ and $\ga_1^\ast$ are also taken into account. The corresponding result for $m=1$ belongs to M.~Laurent and Y.~Bugeaud (see \cite{laurent_2dim}, \cite{bugeaud_laurent_up_down}). They proved that if the system \eqref{eq:the_system} has no non-zero integer solutions, then
  \begin{equation} \label{eq:bugeaud_laurent}
    \frac{(\ga_1^\ast-1)\gb_1^\ast}{((n-2)\ga_1^\ast+1)\gb_1^\ast+(n-1)\ga_1^\ast}\leq
    \gb_1\leq
    \frac{(1-\ga_1)\gb_1^\ast-n+2-\ga_1}{n-1}\,.
  \end{equation}
  The inequalities \eqref{eq:bugeaud_laurent} were generalized to the case of arbitrary $n$, $m$ by the author in \cite{german_JNT}, where it was proved for arbitrary $n$, $m$ that if the space of integer solutions of \eqref{eq:the_system} is not a one-dimensional lattice, then along with \eqref{eq:dyson_transference} we have
  \begin{align}
    & \gb_1^\ast\geq
    \frac{(n-1)(1+\gb_1)-(1-\ga_1)}{(m-1)(1+\gb_1)+(1-\ga_1)}\,, \label{eq:loranoyadenie_2} \\
    & \gb_1^\ast\geq
    \frac{(n-1)(1+\gb_1^{-1})-(\ga_1^{-1}-1)}{(m-1)(1+\gb_1^{-1})+(\ga_1^{-1}-1)}
    \,, \label{eq:loranoyadenie_3}
  \end{align}
  with \eqref{eq:loranoyadenie_2} stronger than \eqref{eq:loranoyadenie_3} if and only if $\ga_1<1$.

  \subsection{Uniform exponents}

  V.~Jarn\'{\i}k and A.~Apfelbeck proved literal analogues of \eqref{eq:khintchine_transference} and \eqref{eq:dyson_transference} for the uniform exponents, i.e. with $\gb_1$, $\gb_1^\ast$ replaced by $\ga_1$, $\ga_1^\ast$, respectively (see \cite{jarnik_tiflis}, \cite{apfelbeck}). They also obtained some stronger inequalities of a more cumbersome appearance. Among them, lonely in its elegance, stands the \emph{equality}
  \begin{equation} \label{eq:jarnik_equality}
    \ga_1^{-1}+\ga_1^\ast=1
  \end{equation}
  proved by Jarn\'{\i}k for $n=1$, $m=2$. The results of Jarn\'{\i}k and Apfelbeck were improved by the author in \cite{german_JNT}, where it was shown that for arbitrary $n$, $m$ we have
  \begin{equation} \label{eq:my_inequalities_cases}
    \ga_1^\ast\geq
    \begin{cases}
      \dfrac{n-1}{m-\ga_1},\quad\ \ \text{ if }\ \ga_1\leq1, \\
      \dfrac{n-\ga_1^{-1\vphantom{\big|}}}{m-1},\quad\ \ \text{if }\ \ga_1\geq1.
    \end{cases}
  \end{equation}

  \subsection{Khintchine's inequalities split}

  Laurent and Bugeaud used the exponents $\gb_p$ to split \eqref{eq:khintchine_transference} into a chain of inequalities relating  $\gb_p$ to $\gb_{p+1}$. Namely, they proved that for $m=1$ we have  $\gb_1^\ast=\gb_n$ and
  \begin{equation} \label{eq:khintchine_transference_split}
    \gb_{p+1}\geq\frac{(n-p+1)\gb_{p}+1}{n-p}\,,\qquad
    \gb_{p}\geq\frac{p\gb_{p+1}}{\gb_{p+1}+p+1}\,,\qquad p=1,\ldots,n-1.
  \end{equation}
  Besides that, they proved for $m=1$ that if the system \eqref{eq:the_system} has no non-zero integer solutions, then we have  $\ga_1^\ast=\ga_n$ and
  \begin{equation} \label{eq:laurent_inter_mixed}
    \gb_2\geq\frac{\gb_1+\ga_1}{1-\ga_1}\,,\qquad
    \gb_{n-1}\geq\frac{1-\ga_n^{-1}}{\gb_n^{-1}+\ga_n^{-1}}\,,
  \end{equation}
  which, combined with \eqref{eq:khintchine_transference_split}, gave them \eqref{eq:bugeaud_laurent}.

  \section{Main results for intermediate Diophantine exponents} \label{sec:results}

  In this paper we generalize \eqref{eq:khintchine_transference_split} and its analogue for the uniform exponents to the case of arbitrary $n$, $m$. We show (see Proposition \ref{prop:starred_equals_nonstarred} in Section \ref{sec:transposed}) that
  \begin{equation} \label{eq:ab_via_ab_transposed}
    \gb_p^\ast=\gb_{d-p},\qquad\ga_p^\ast=\ga_{d-p},\qquad p=1,\ldots,d-1,
  \end{equation}
  and prove

  \begin{theorem} \label{t:inter_dyfel}
    For each $p=1,\ldots,d-2$ the following statements hold.

    If $p\geq m$, then
    \begin{equation} \label{eq:inter_dyson_p_geq}
      (d-p-1)(1+\gb_{p+1})\geq(d-p)(1+\gb_p),
    \end{equation}
    \begin{equation} \label{eq:inter_apfel_p_geq}
      (d-p-1)(1+\ga_{p+1})\geq(d-p)(1+\ga_p).
    \end{equation}

    If $p\leq m-1$, then
    \begin{equation} \label{eq:inter_dyson_p_leq}
      (d-p-1)(1+\gb_p)^{-1}\geq(d-p)(1+\gb_{p+1})^{-1}-n,
    \end{equation}
    \begin{equation} \label{eq:inter_apfel_p_leq}
      (d-p-1)(1+\ga_p)^{-1}\geq(d-p)(1+\ga_{p+1})^{-1}-n.
    \end{equation}
  \end{theorem}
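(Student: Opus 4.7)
The plan is to translate the problem into the parametric geometry of numbers language of Sections \ref{sec:schmex}--\ref{sec:ex_via_schmex}, where $\gb_p$ and $\ga_p$ are expressed as suitable $\liminf$ and $\limsup$ of the first successive minimum (relative to $\wedge^p(\Z^d)$) of a one-parameter family of convex bodies in $\wedge^p(\R^d)$. By Proposition \ref{prop:ba_substitution}, this body is a coordinate parallelepiped in the basis $\{\vec L_\rho \wedge \vec E_{\rho'}\}$, so the task reduces to comparing these parallelepipeds at consecutive indices $p$ and $p+1$ and transferring good integer approximations from one to the other.

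For the case $p \geq m$, the index $k_0 = \max(0, m-p)$ vanishes both for $p$ and for $p+1$, and the active range $k = k_0, \ldots, k_1$ shifts by at most one term. Given a nonzero $\vec Z \in \wedge^p(\Z^d)$ witnessing an exponent close to $\gb_p$ (respectively $\ga_p$) at parameter $t$, I would construct an element of $\wedge^{p+1}(\Z^d)$ by wedging $\vec Z$ with a suitably chosen basis vector $\vec e_j$, $j \in \{m+1, \ldots, d\}$, picked so that $\vec e_j \wedge \vec Z$ is nonzero. A direct computation using the bounds on $\vec Z$ then shows that $\vec e_j \wedge \vec Z$ satisfies the analogue of \eqref{eq:ba_modified} for index $p+1$ at parameter $t' = t^{(d-p-1)/(d-p)}$ with the same exponent $\gamma$. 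The ratio $(d-p)/(d-p-1)$ is dictated by the requirement that the product of right-hand-side bounds be preserved in the volume sense; passing to the appropriate $\limsup$ or $\liminf$ then yields \eqref{eq:inter_dyson_p_geq} and \eqref{eq:inter_apfel_p_geq}.

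The case $p \leq m-1$ is handled by a parallel construction in which $\vec Z$ is instead wedged with some $\pmb\ell_j$, $j \in \{1, \ldots, m\}$. Here $k_0 = m - p > 0$ shifts down to $m - p - 1$ as $p$ moves to $p+1$, so that a brand-new constraint (the one at the new smallest index, with right-hand side $t$) enters the system and does not rescale with $\gamma$. The accumulation of these ``unscaled'' contributions, spread over the $n$ inequalities indexed by $k > k_0$ through the shift $t \mapsto t'$, is exactly what produces the additive term $-n$ on the right-hand side of \eqref{eq:inter_dyson_p_leq} and \eqref{eq:inter_apfel_p_leq}; the remaining algebra is analogous to the first case.

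The main obstacle is the wedging step itself: given a nonzero $\vec Z \in \wedge^p(\Z^d)$, one must ensure that some wedge $\vec e_j \wedge \vec Z$ (or $\pmb\ell_j \wedge \vec Z$) is both nonzero and inherits controlled bounds on its coefficients in the basis $\{\vec L_\rho \wedge \vec E_{\rho'}\}$. A linear-algebraic argument, locating a nonzero coefficient $Z_{\rho, \rho'}$ of $\vec Z$ and choosing the wedge factor outside the appropriate index set, should resolve existence, and the exponent bookkeeping under $p \to p+1$ is then routine.
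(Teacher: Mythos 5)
The construction at the heart of your argument --- wedging a good $\vec Z\in\wedge^p(\Z^d)$ with a fixed vector $\vec e_j$ or $\pmb\ell_j$ --- does not yield the stated bounds, and this is a genuine gap, not routine bookkeeping. Via Mahler's compound-body relation \eqref{eq:first_minimum_vs_product} and Proposition \ref{prop:ba_via_Psis}, inequality \eqref{eq:inter_dyson_p_geq} is equivalent to
\[
\prod_{i=1}^{p+1}\lambda_i(\cB(s))\ \leq\ C\Bigl(\prod_{i=1}^{p}\lambda_i(\cB(s))\Bigr)^{\frac{d-p-1}{d-p}},
\]
i.e.\ to Corollary \ref{cor:precise_inter_dyfel}, and the only way to bound $\lambda_{p+1}$ from above in terms of $\lambda_1,\dots,\lambda_p$ is Minkowski's \emph{second} theorem combined with $\lambda_{p+1}\le(\lambda_{p+1}\cdots\lambda_d)^{1/(d-p)}$. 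Your proposal never invokes Minkowski's second theorem: the remark that the ratio $(d-p)/(d-p-1)$ is ``dictated by preserving the product of the bounds in the volume sense'' only guarantees, via Minkowski's \emph{first} theorem, that the target parallelepiped contains \emph{some} nonzero lattice multivector, which is the trivial bound $\gb_{p+1}\ge\ga_{p+1}\ge\gamma_0$ of Remark \ref{rem:initial_inequalities}; it says nothing about $\vec e_j\wedge\vec Z$ landing in it. Concretely, the minimal $p$-vector is essentially $\vec v_1\wedge\dots\wedge\vec v_p$ with $\vec v_i$ realizing $\lambda_i$, so the compound norm of $\vec e_j\wedge\vec Z$ is of order $\|\vec e_j\|\cdot\lambda_1\cdots\lambda_p$, where $\|\vec e_j\|$ is the gauge of $\vec e_j$ with respect to the parallelepiped --- a fixed positive power of the parameter ($e^{ms/n}$ in the normalization of $\cB(s)$), bearing no relation to the quantity $\lambda_{p+1}$ that must stand in its place. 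This multiplicative loss by a fixed power of $t$ cannot be absorbed by a reparametrization $t\mapsto t'=t^{u}$, because the defining bounds $t^{1-(k-k_0)(1+\gamma)}$ of $\widehat P_\gamma(t)$ carry exponents of both signs, so no choice of $u$ shrinks all coordinates at once; writing out $1-k(1+\gamma)\le u\bigl(1-k(1+\gamma')\bigr)$ with $1+\gamma'=\tfrac{d-p}{d-p-1}(1+\gamma)$ shows the required inequalities fail at the largest $k$.

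Two further problems: for $p\le m-1$ the vectors $\pmb\ell_j$, $j\le m$, have irrational coordinates, so $\pmb\ell_j\wedge\vec Z\notin\wedge^{p+1}(\Z^d)$ and cannot serve as the new integer multivector; and the heuristic attribution of the $-n$ term to ``accumulated unscaled contributions'' is not an argument --- in the paper that term emerges mechanically from the change of normalization $\varkappa_p\mapsto\varkappa_{p+1}$ in Proposition \ref{prop:ba_via_Psis} when $\varkappa_p=p$. The paper's proof is short and entirely on the parametric side: Minkowski's second theorem gives $\Psi_d(s)=O(s^{-1})$ (Proposition \ref{prop:mink}), monotonicity of the $\psi_i$ then yields $\tfrac{p+1}{p}\Psi_p(s)\le\Psi_{p+1}(s)\le\tfrac{d-p-1}{d-p}\Psi_p(s)$, and Proposition \ref{prop:ba_via_Psis} converts the resulting Corollary \ref{cor:Psi_inter_dyson} into the four inequalities, the case split $p\ge m$ versus $p\le m-1$ corresponding to which term realizes the minimum in $\varkappa_p$. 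If you insist on a direct transference construction, the wedge factor must be a lattice point realizing $\lambda_{p+1}$, and you then need both the decomposability statement behind \eqref{eq:first_minimum_vs_product} and Minkowski's second theorem --- at which point you have reproduced the paper's machinery.
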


  The second result of the current paper generalizes \eqref{eq:laurent_inter_mixed}. We prove

  \begin{theorem} \label{t:inter_loranoyadenie}
    Suppose that the space of integer solutions of \eqref{eq:the_system} is not a one-dimensional lattice. Then for $m=1$ we have
    \begin{equation} \label{eq:inter_loranoyadenie_m_is_1}
      \gb_2\geq\frac{\gb_1+\ga_1}{1-\ga_1}\,,
%      \gb_2\geq\frac{2+\gb_1^{-1}-\ga_1^{-1}}{\ga_1^{-1}-1}\,.
    \end{equation}
    and for $m\geq2$ we have
    \begin{equation} \label{eq:inter_loranoyadenie_m_geq_2}
      \gb_2\geq
      \begin{cases}
        \dfrac{\ga_1-1}{2+\gb_1-\ga_1}\,,\quad\text{ if }\ \ga_1\neq\infty, \\
        \dfrac{1-\ga_1^{-1} \vphantom{\frac{\big|}{}} }{\gb_1^{-1}+\ga_1^{-1}}\,.
      \end{cases}
    \end{equation}
  \end{theorem}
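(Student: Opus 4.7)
The plan is to work inside the Schmidt--Summerer parametric geometry of numbers picture developed in Sections \ref{sec:schmex}--\ref{sec:transposed}, in direct analogy with the way Theorem \ref{t:inter_dyfel} is handled in Section \ref{sec:proofs}. One introduces a one-parameter family of convex bodies $\cB(t) \subset \R^d$ whose logarithmic successive minima $\psi_1(\tau) \leq \cdots \leq \psi_d(\tau)$, $\tau = \log t$, encode the intermediate Diophantine exponents: $\gb_1$ and $\ga_1$ correspond to the liminf and limsup of a suitable rescaling of $-\psi_1(\tau)$, while $\gb_2$ is controlled by the liminf of the linear combination of $\psi_1$ and $\psi_2$ dictated by Definition \ref{def:ba} with $p=2$ via Proposition \ref{prop:ba_substitution}. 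Rewriting \eqref{eq:inter_loranoyadenie_m_is_1}--\eqref{eq:inter_loranoyadenie_m_geq_2} in this language turns the theorem into an analytic statement about two real-valued functions $\psi_1, \psi_2$ constrained by Minkowski's second theorem, $\psi_1(\tau)+\cdots+\psi_d(\tau) = O(1/\tau)$, and by the piecewise-linear monotonicity inherited from the shape of $\cB(t)$.

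Next, I choose a sequence $\tau_k \to \infty$ along which the regular exponent $\gb_1$ is realised, so that $\psi_1(\tau_k)$ is as negative as its liminf allows; at the same $\tau_k$ the uniform exponent $\ga_1$ continues to enforce the ``flat'' lower bound on $\psi_1(\tau)$ for all $\tau$ in a right-neighbourhood of $\tau_k$. The hypothesis that the space of integer solutions of \eqref{eq:the_system} is not a one-dimensional lattice enters at exactly this point: it guarantees that the integer vectors realising $\psi_1$ and $\psi_2$ can be taken linearly independent over $\Q$, so that $\psi_2$ is genuinely distinct from $\psi_1$ and may be estimated from below by combining Minkowski's second theorem with the uniform bound. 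Without the hypothesis, a single rational line could carry both minima and no such separation would hold.

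The bound on $\gb_2$ then follows by translating the resulting estimate on $\psi_1(\tau_k) + \psi_2(\tau_k)$ (or the appropriate linear combination when $m \geq 2$) back through Proposition \ref{prop:ba_substitution}. For $m=1$ a single scale and a single index suffice and one recovers \eqref{eq:inter_loranoyadenie_m_is_1} in one stroke. For $m \geq 2$ the system \eqref{eq:ba_modified} involves two genuinely distinct non-trivial indices, $k = m-1$ and $k = m$; analysing them separately produces the two expressions displayed in \eqref{eq:inter_loranoyadenie_m_geq_2}, with the second branch becoming the effective one precisely when $\ga_1 = \infty$ causes the first expression to degenerate. The asymmetry between the $m=1$ and $m \geq 2$ cases reflects the fact that for $m \geq 2$ the index $k = m-1$ in \eqref{eq:ba_modified} captures an intermediate wedge product $\vec L_\sigma \wedge \vec E_{\sigma'} \wedge \vec Z$ with $|\sigma| = m-1$, which simply has no analogue when $m=1$.

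The principal obstacle will be the middle step: converting a single tight estimate for $\psi_1$ at the epoch $\tau_k$ into a lower bound for $\psi_2$ valid on an interval to the right of $\tau_k$, without losing the sharpness that distinguishes \eqref{eq:inter_loranoyadenie_m_is_1}--\eqref{eq:inter_loranoyadenie_m_geq_2} from the weaker Khintchine-type bounds \eqref{eq:khintchine_transference_split}. This is exactly the Schmidt--Summerer ``drop-and-recover'' analysis, and it is here that the non-one-dimensional hypothesis is indispensable. The remaining multilinear-algebra bookkeeping required to identify the correct linear combinations of $\psi_i(\tau)$ with the inequalities of Definition \ref{def:ba} via Proposition \ref{prop:ba_substitution} is routine and closely parallels the computations already performed in the proof of that proposition.
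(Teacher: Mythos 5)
Your overall framing---translate everything into Schmidt--Summerer exponents via Propositions \ref{prop:belpha_via_psis} and \ref{prop:ba_via_Psis} and then prove an inequality relating $\bPsi_2$ to $\bPsi_1$ and $\aPsi_1$---is indeed the paper's route (Theorem \ref{t:inter_loranoyadenie} is exactly Theorem \ref{t:inter_loranoyadenie_Psied} after translation). But the proposal stops short of the one step that actually constitutes the proof. The paper's mechanism is this: at a scale $s$ where $\psi_1$ has a local minimum (so that the first minimum is attained simultaneously on both coordinate blocks, $\mu_s(\vec v_s)=\nu_s(\vec v_s)=\lambda_1(\cB(s))$), one inflates the parallelepiped $\lambda_1(\cB(s))\cB(s)$ in one block of coordinates only, until its interior first captures a second, non-collinear lattice point; the inflated body is again of the form $\lambda'\cB(s')$ for an explicitly computable auxiliary scale $s'$ with $\lambda_1(\cB(s'))=\lambda_2(\cB(s'))$, and Lemma \ref{l:main} then converts the inclusion $\lambda_1(\cB(s))\cB(s)\subseteq\lambda_1(\cB(s'))\cB(s')$ into the bound \eqref{eq:main} on $\psi_2(s)$ in terms of $\psi_1(s)$ and $\psi_1(s')$. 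You acknowledge this ``middle step'' as the principal obstacle and label it ``drop-and-recover,'' but you do not carry it out, so the proposal as written does not prove the theorem. Relatedly, your account of the two branches in \eqref{eq:inter_loranoyadenie_m_geq_2} is off: they do not come from the two indices $k=m-1,m$ in \eqref{eq:ba_modified}; they come from whether the auxiliary scale satisfies $s'\leq s$ or $s'\geq s$, i.e.\ which coordinate block is inflated, and both branches are present for every $m$ in the unified statement \eqref{eq:inter_loranoyadenie_Psied}. The $m=1$ versus $m\geq2$ dichotomy in the final form is only an artifact of translating $\bPsi_2$ back through $\varkappa_2=\min(2,\tfrac mn(d-2))$.

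Your explanation of where the hypothesis enters is also not right. The vectors realising $\lambda_1$ and $\lambda_2$ are linearly independent by the very definition of successive minima, so there is nothing to ``guarantee'' there. The hypothesis is used for a case split: if \eqref{eq:the_system} has a nonzero integer solution, then (because the solution space is not a one-dimensional lattice) it has two independent ones, forcing $\aPsi_1=\bPsi_1=-1$ and $\bPsi_2=-2$, so the conclusion is trivially true---whereas if the solution space \emph{were} a one-dimensional lattice the claimed inequality could genuinely fail. In the complementary case (no nonzero integer solutions) the hypothesis yields \eqref{eq:tending_to_infty}, $s(1+\psi_1(s))\to\infty$, which is what ensures that the auxiliary scales $s',s''$ tend to infinity with $s$ and that $\psi_1(s')\neq-1$ eventually, so that the limsup of the quotient in \eqref{eq:for_inter_loranoyadenie_liminfsup} can legitimately be bounded by $\frac{\aPsi_1-\bPsi_1}{n+n\aPsi_1}$. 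Without supplying the inflation construction, Lemma \ref{l:main}, and this corrected use of the hypothesis, the argument has a genuine gap at its core.
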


  The inequality of \eqref{eq:inter_loranoyadenie_m_is_1} is exactly the first inequality of \eqref{eq:laurent_inter_mixed}. The second inequality of \eqref{eq:inter_loranoyadenie_m_geq_2} in view of \eqref{eq:ab_via_ab_transposed} gives the second inequality of \eqref{eq:laurent_inter_mixed}.

  It follows from Theorem \ref{t:inter_dyfel} that for $m\geq2$
  \begin{equation}
    (d-2)(1+\gb_{d-1})^{-1}\leq(1+\gb_2)^{-1}+m-2.
  \end{equation}
  Combining this inequality with \eqref{eq:inter_loranoyadenie_m_geq_2} we get \eqref{eq:loranoyadenie_2} and \eqref{eq:loranoyadenie_3}, in case $m\geq2$.

  The third result of this paper splits the inequalities \eqref{eq:my_inequalities_cases}. It is the following

  \begin{theorem} \label{t:inter_my_inequalities}
    For $m=1$ we have
    \begin{equation} \label{eq:inter_my_inequalities_m_is_1}
      \ga_2\geq(1-\ga_1)^{-1}-\frac{n-2}{n-1}\,.
    \end{equation}
    For $m\geq2$ we have
    \begin{equation} \label{eq:inter_my_inequalities_m_geq_2}
      \ga_2\geq
      \begin{cases}
        \dfrac{n-1}{-n-(d-2)(1-\ga_1)^{-1}}\,,\quad\text{ if }\ \ga_1\leq1, \\
        \dfrac{m-1\vphantom{\frac{|}{}}}{\phantom{-}n+(d-2)(\ga_1-1)^{-1}}\,,\quad\text{ if }\ \ga_1\geq1.
      \end{cases}
    \end{equation}
  \end{theorem}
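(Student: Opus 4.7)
The plan is to execute the proof in the Schmidt--Summerer parametric geometry of numbers framework built in Sections \ref{sec:schmex}--\ref{sec:results_schmidted}. In that setup the intermediate uniform exponents $\ga_p$ translate into quantities read off from the log-successive-minima $\psi_p(\tau)$ of a one-parameter family of lattices attached to $\Theta$, and the inequalities \eqref{eq:inter_my_inequalities_m_is_1}, \eqref{eq:inter_my_inequalities_m_geq_2} become geometric relations among these $\psi_p$ that can be analysed by means of the Schmidt--Summerer diagram.

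The driving observation is the duality $\ga_{d-1}=\ga_1^\ast$ supplied by Proposition \ref{prop:starred_equals_nonstarred} (equation \eqref{eq:ab_via_ab_transposed}): this turns the Apfelbeck--Jarn\'ik--German inequality \eqref{eq:my_inequalities_cases} into a base lower bound for $\ga_{d-1}$ in terms of $\ga_1$, split according to whether $\ga_1\le 1$ or $\ga_1\ge 1$, which already foreshadows the two cases of \eqref{eq:inter_my_inequalities_m_geq_2}. My plan is then to push this base bound from $\psi_{d-1}$ down to $\psi_2$ by exploiting the Minkowski identity $\sum_i\psi_i(\tau)=0$, the ordering $\psi_1\le\ldots\le\psi_d$, and the Mahler compound-body identities which also underpin Theorem \ref{t:inter_dyfel} and the duality itself. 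An auxiliary mechanism available here is a ``backward chain'' obtained by applying Theorem \ref{t:inter_dyfel} to $\tr\Theta$ and invoking $\ga_q^\ast=\ga_{d-q}$: the forward relations for $\tr\Theta$ become the ``backward'' estimates $(q-1)(1+\ga_{q-1})\ge q(1+\ga_q)$ for $q\le m$ and $q(1+\ga_{q-1})^{-1}\le (q-1)(1+\ga_q)^{-1}+m$ for $q\ge m+1$, which can be telescoped from $q=d-1$ down to $q=3$. The case $m=1$ of \eqref{eq:inter_my_inequalities_m_is_1} is then the direct specialisation (only the additive family of relations is non-empty), and for $m\ge 2$ the two subcases of \eqref{eq:my_inequalities_cases} propagate to the two subcases of \eqref{eq:inter_my_inequalities_m_geq_2}.

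The main obstacle I expect is the geometric step of converting an estimate on $\psi_{d-1}$ into one on $\psi_2$: one must identify the extremal pattern of the Schmidt--Summerer diagram that actually saturates the inequality, because a crude telescoping of the pairwise relations of Theorem \ref{t:inter_dyfel} does not land exactly on the denominators $-n-(d-2)(1-\ga_1)^{-1}$ and $n+(d-2)(\ga_1-1)^{-1}$ written in \eqref{eq:inter_my_inequalities_m_geq_2}. Once the correct extremal configuration is isolated, the remaining algebra to match the claimed denominators is routine, but some care is required in the regime $\ga_1\le 1$, where the base bound is typically non-positive and the propagation along the chain can amplify sign issues; the condition that the space of integer solutions of \eqref{eq:the_system} be more than a one-dimensional lattice (implicit in the appeal to \eqref{eq:my_inequalities_cases}) is what keeps the argument honest in degenerate corners.
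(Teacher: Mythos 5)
Your plan runs the paper's logic in reverse, and — as you half-suspect when you concede that the telescoping ``does not land exactly on the denominators'' — it provably cannot reach the stated constants. In the paper, Theorem \ref{t:inter_my_inequalities} is the \emph{stronger} statement: \eqref{eq:my_inequalities_cases} is recovered \emph{from} it by composing with \eqref{eq:apfel_shampur_m_is_1}, \eqref{eq:apfel_shampur_m_geq_2}. Going the other way, your only transport mechanism from $\ga_{d-1}=\ga_1^\ast$ down to $\ga_2$ is the chain; your ``backward'' estimates $q(1+\ga_{q-1})^{-1}\leq(q-1)(1+\ga_q)^{-1}+m$ are, after rewriting via Proposition \ref{prop:ba_via_Psis}, exactly the left half $\tfrac{p+1}{p}\aPsi_p\leq\aPsi_{p+1}$ of the sandwich \eqref{eq:Psi_inter_dyson}, which telescopes to $\aPsi_2\leq\tfrac{2}{d-1}\aPsi_{d-1}$. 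Feeding in \eqref{eq:my_inequalities_cases} (translated by Propositions \ref{prop:ba_via_Psis}, \ref{prop:starred_ba_via_starred_Psis} and Corollary \ref{cor:starred_equals_nonstarred}) gives, in the case $\ga_1\leq1$, only $\aPsi_2\leq\tfrac{2}{d-1}\cdot\tfrac{\aPsi_1}{(n-1)+n\aPsi_1}$, whereas Theorem \ref{t:inter_my_inequalities_Psied} asserts $\aPsi_2\leq\tfrac{(d-2)\aPsi_1}{(n-1)+n\aPsi_1}$. Since the common factor $\tfrac{\aPsi_1}{(n-1)+n\aPsi_1}$ is nonpositive and $d-2>\tfrac{2}{d-1}$ for $d\geq4$, the theorem is strictly stronger than anything your route can deliver (the two coincide only for $d=3$). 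No choice of ``extremal configuration'' within Theorem \ref{t:inter_dyfel} plus \eqref{eq:my_inequalities_cases} closes this gap; the loss is intrinsic to the route.

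The missing idea is the two-parameter comparison of Lemma \ref{l:main} together with Corollaries \ref{cor:for_inter_loranoyadenie} and \ref{cor:for_inter_my_inequalities}: for each $s$ one inflates the critical box $\lambda_1(\cB(s))\cB(s)$ in either the first $m$ or the last $n$ coordinates until it traps a second independent lattice point, producing an $s'$ with $\lambda_1(\cB(s'))=\lambda_2(\cB(s'))$ and $s(1+\psi_1(s))\leq s'\leq s(1-(n/m)\psi_1(s))$; comparing $\psi_2(s)$ with $\psi_1(s)$ and $\psi_1(s')$ and then eliminating $\psi_1(s)$ via \eqref{eq:psi_sandwiched_by_Psi} is what generates the denominators $(n-1)+n\aPsi_1$, $(m-1)-n\aPsi_1$ and the threshold $\tfrac{m-n}{2n}$. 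Two further slips: Theorem \ref{t:inter_my_inequalities}, unlike Theorem \ref{t:inter_loranoyadenie}, carries no hypothesis on the integer solution space (nor does \eqref{eq:my_inequalities_cases}), so the caveat you invoke is out of place — the degenerate case $\aPsi_1=-1$ is handled directly by \eqref{eq:Psi_inter_dyson}; and for $m=1$ the family $q\leq m$ of your backward chain is empty, so \eqref{eq:inter_my_inequalities_m_is_1} is not a ``direct specialisation'' of anything you have actually set up.
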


  Let us show that Theorem \ref{t:inter_my_inequalities} splits \eqref{eq:my_inequalities_cases} the very same way Theorem \ref{t:inter_loranoyadenie} splits \eqref{eq:loranoyadenie_2} and \eqref{eq:loranoyadenie_3}. It follows from Theorem \ref{t:inter_dyfel} that for $m=1$
  \begin{equation} \label{eq:apfel_shampur_m_is_1}
    1+\ga_n\geq(n-1)(1+\ga_2)
  \end{equation}
  and that for $m\geq2$
  \begin{equation} \label{eq:apfel_shampur_m_geq_2}
    (d-2)(1+\ga_{d-1})^{-1}\leq(1+\ga_2)^{-1}+m-2.
  \end{equation}
  Combining \eqref{eq:apfel_shampur_m_geq_2} with \eqref{eq:inter_my_inequalities_m_geq_2}, we get \eqref{eq:my_inequalities_cases} for  $m\geq2$. As for $m=1$, we always have $\ga_1\leq1$ in this case, so \eqref{eq:apfel_shampur_m_is_1} and \eqref{eq:inter_my_inequalities_m_is_1} indeed gives \eqref{eq:my_inequalities_cases} with $m=1$.

  \section{Schmidt--Summerer's exponents} \label{sec:schmex}

  Let $\La$ be a unimodular $d$-dimensional lattice in $\R^d$. Denote by $\cB_\infty^d$ the unit ball in sup-norm, i.e. the cube with vertices at the points $(\pm1,\ldots,\pm1)$. For each $d$-tuple $\pmb\tau=(\tau_1,\ldots,\tau_d)\in\R^d$ denote by $D_{\pmb\tau}$ the diagonal $d\times d$ matrix with $e^{\tau_1},\ldots,e^{\tau_d}$ on the main diagonal.
%  and set $\cB(\vec t)=D_{\pmb\tau}\cB_\infty^d$.
  Let us also denote by $\lambda_p(M)$ the $p$-th successive minimum of a compact symmetric convex body $M\subset\R^d$ (centered at the origin) with respect to the lattice $\La$.

%  For each $\pmb\tau\in\R^d$, such that $\tau_1+\ldots+\tau_d=0$, set
%  \[ L_i(\pmb\tau)=\ln\lambda_i(D_{\pmb\tau}\cB_\infty^d). \]
%  In other words, $L_i(\pmb\tau)$ is the minimal $\tau\in\R$, such that the parallelepiped $e^\tau D_{\pmb\tau}\cB_\infty^d$ contains $i$ linearly independent points of $\La$. The functions $L_i(\pmb\tau)$ are defined in the plane
%  \[ \pi=\big\{ \pmb\tau\in\R^d \,\big|\ \tau_1+\ldots+\tau_d=0 \big\}, \]
%  and, as we shall show later, are all continuous and piecewise linear in $\pmb\tau$.
%    \[ \bpsi_i(\La,\gT)=\liminf_{s\to+\infty}\frac{L_i(\vec t(s))}{s}\,,\qquad
%       \apsi_i(\La,\gT)=\limsup_{s\to+\infty}\frac{L_i(\vec t(s))}{s} \]

  Suppose we have a path $\gT$ in $\R^d$ defined as $\pmb\tau=\pmb\tau(s)$, $s\in\R_+$, such that
%  $|\pmb\tau(s)|\to\infty$ as $s\to\infty$ and
  \begin{equation} \label{eq:sum_of_taus_is_zero}
    \tau_1(s)+\ldots+\tau_d(s)=0,\quad\text{ for all }s.
  \end{equation}
  In our further applications to Diophantine approximation we shall confine ourselves to a path that is a ray with the endpoint at the origin and all the functions $\tau_1(s),\ldots,\tau_d(s)$ being linear. However, in this Section, as well as in the next one, all the definitions and statements are given for arbitrary paths and lattices.

  Set $\cB(s)=D_{\pmb\tau(s)}\cB_\infty^d$. For each $p=1,\ldots,d$\, let us consider the functions
  \[ \psi_p(\La,\gT,s)=\frac{\ln(\lambda_p(\cB(s)))}{s}\,,\qquad \Psi_p(\La,\gT,s)=\sum_{i=1}^p\psi_i(\La,\gT,s). \]

  \begin{definition} \label{def:schmidt_psi}
    We call the quantities
    \[ \bpsi_p(\La,\gT)=\liminf_{s\to+\infty}\psi_p(\La,\gT,s),\qquad
       \apsi_p(\La,\gT)=\limsup_{s\to+\infty}\psi_p(\La,\gT,s) \]
    \emph{the $p$-th lower} and \emph{upper Schmidt--Summerer's exponents of the first type}, respectively.
  \end{definition}

  \begin{definition} \label{def:schmidt_Psi}
    We call the quantities
    \[ \bPsi_p(\La,\gT)=\liminf_{s\to+\infty}\Psi_p(\La,\gT,s)\,,\qquad
       \aPsi_p(\La,\gT)=\limsup_{s\to+\infty}\Psi_p(\La,\gT,s) \]
    \emph{the $p$-th lower} and \emph{upper Schmidt--Summerer's exponents of the second type}, respectively.
  \end{definition}

  Sometimes, when it is clear from the context what lattice and what path are under consideration, we shall write simply $\psi_p(s)$, $\Psi_p(s)$, $\bpsi_p$, $\apsi_p$, $\bPsi_p$, and $\aPsi_p$.

  The following Proposition and its Corollaries generalize some of the observations made in the papers \cite{schmidt_summerer} and \cite{bugeaud_laurent_up_down}.

  \begin{proposition} \label{prop:mink}
    For any $\La$ and $\gT$ we have
    \begin{equation} \label{eq:mink}
      0\leq-\Psi_d(s)=O(s^{-1}).
    \end{equation}
    Particularly,
    \begin{equation} \label{eq:mink_0}
      \bPsi_d=\aPsi_d=0.
    \end{equation}
  \end{proposition}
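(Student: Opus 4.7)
The plan is to apply Minkowski's second theorem to the body $\cB(s)$ with respect to the lattice $\La$, and exploit the trace-zero condition \eqref{eq:sum_of_taus_is_zero} to control the volume of $\cB(s)$.

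First I would observe that since $\cB(s) = D_{\pmb\tau(s)}\cB_\infty^d$ with $D_{\pmb\tau(s)}$ a diagonal matrix, the Jacobian is
\[ \det D_{\pmb\tau(s)} = \prod_{i=1}^d e^{\tau_i(s)} = \exp\Bigl(\sum_{i=1}^d\tau_i(s)\Bigr) = 1 \]
by \eqref{eq:sum_of_taus_is_zero}. Since $\vol(\cB_\infty^d) = 2^d$, this gives $\vol(\cB(s)) = 2^d$ for every $s$. Because $\La$ is unimodular, $\covol(\La) = 1$.

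Next I would invoke Minkowski's second theorem applied to the symmetric convex body $\cB(s)$ and the lattice $\La$:
\[ \frac{2^d}{d!}\covol(\La) \leq \vol(\cB(s))\prod_{i=1}^d\lambda_i(\cB(s)) \leq 2^d\covol(\La). \]
Substituting the two computations above collapses this to
\[ \frac{1}{d!} \leq \prod_{i=1}^d\lambda_i(\cB(s)) \leq 1. \]
Taking logarithms and recalling that $s\Psi_d(s) = \sum_{i=1}^d\ln\lambda_i(\cB(s))$, I obtain
\[ -\ln(d!) \leq s\Psi_d(s) \leq 0, \]
so that, dividing by $s > 0$,
\[ 0 \leq -\Psi_d(s) \leq \frac{\ln(d!)}{s}. \]
This is exactly \eqref{eq:mink}, and passing to $\liminf$ and $\limsup$ as $s \to +\infty$ yields \eqref{eq:mink_0}.

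There is no genuine obstacle here: the statement is just the reinterpretation of Minkowski's second theorem in the logarithmic/parametric scale, with the trace-zero condition being precisely what is needed to keep $\vol(\cB(s))$ constant so that both the upper and lower Minkowski bounds become asymptotically sharp constants independent of $s$. The only thing to take care of is that $s$ is large enough for the division by $s$ to produce an $O(s^{-1})$ estimate — which is immediate since the bound $\ln(d!)$ is a constant depending only on $d$.
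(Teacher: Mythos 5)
Your proof is correct and follows exactly the same route as the paper's: the trace-zero condition gives $\vol(\cB(s))=2^d$, Minkowski's second theorem yields $\frac{1}{d!}\leq\prod_{i=1}^d\lambda_i(\cB(s))\leq1$, and taking logarithms and dividing by $s$ gives \eqref{eq:mink} and hence \eqref{eq:mink_0}. No differences worth noting.
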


  \begin{proof}
    Due to \eqref{eq:sum_of_taus_is_zero} the volumes of all the parallelepipeds $\cB(s)$ are equal to $2^d$, so by Minkowski's second theorem we have
    \[ \frac{1}{d!}\leq\prod_{i=1}^d\lambda_i(\cB(s))\leq1. \]
    Hence
    \[ -\frac{\ln(d!)}{s}\leq\sum_{i=1}^d\psi_i(s)\leq0, \]
    which immediately implies \eqref{eq:mink}.
  \end{proof}

  \begin{corollary} \label{cor:precise_inter_dyfel}
    For every $p$ within the range $1\leq p\leq d-2$ and every $s>0$ we have
    \begin{equation} \label{eq:Psi_sandwich}
      \frac{p+1}{p}\Psi_p(s)\leq\Psi_{p+1}(s)\leq\frac{d-p-1}{d-p}\Psi_p(s).
    \end{equation}
  \end{corollary}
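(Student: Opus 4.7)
The plan is to exploit just two ingredients about the functions $\psi_p(s)$: the monotonicity $\psi_1(s)\le\psi_2(s)\le\cdots\le\psi_d(s)$, inherited from the fact that $\lambda_1(\cB(s))\le\cdots\le\lambda_d(\cB(s))$, and Proposition \ref{prop:mink}, which guarantees $\Psi_d(s)\le 0$ for every $s>0$. Both halves of \eqref{eq:Psi_sandwich} will then emerge from rearranging a telescoping comparison, so it is cleaner to establish the unnormalized forms $(p+1)\Psi_p(s)\le p\,\Psi_{p+1}(s)$ and $(d-p)\Psi_{p+1}(s)\le(d-p-1)\Psi_p(s)$ rather than dividing by $p$ or $d-p$ at the start (this also avoids having to worry about the sign of $\Psi_p(s)$, which may well be negative).

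For the left inequality I would bound each summand of $\Psi_p(s)$ above by $\psi_{p+1}(s)$. Monotonicity gives $\psi_i(s)\le\psi_{p+1}(s)$ for $i=1,\ldots,p$, so summing yields $\Psi_p(s)\le p\,\psi_{p+1}(s)$. Adding $p\,\Psi_p(s)$ to both sides and using $\Psi_p(s)+\psi_{p+1}(s)=\Psi_{p+1}(s)$ converts this into $(p+1)\Psi_p(s)\le p\,\Psi_{p+1}(s)$, which is the desired lower bound on $\Psi_{p+1}(s)$. Notice that Proposition \ref{prop:mink} is not needed here; monotonicity alone suffices.

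For the right inequality I would instead bound $\psi_{p+1}(s)$ from above by each of the larger $\psi_j(s)$ with $j\ge p+1$. This time monotonicity yields $(d-p)\psi_{p+1}(s)\le\sum_{j=p+1}^{d}\psi_j(s)=\Psi_d(s)-\Psi_p(s)$, and here is where Proposition \ref{prop:mink} enters: replacing $\Psi_d(s)$ by $0$ gives $(d-p)\psi_{p+1}(s)\le -\Psi_p(s)$, i.e. $\Psi_p(s)+(d-p)\psi_{p+1}(s)\le0$. Rearranging with $\Psi_{p+1}(s)=\Psi_p(s)+\psi_{p+1}(s)$ produces $(d-p)\Psi_{p+1}(s)\le(d-p-1)\Psi_p(s)$, as required. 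There is no real obstacle; the only substantive input beyond monotonicity is $\Psi_d(s)\le0$, and it enters exactly where one would expect — in the upper bound that has to ``account for'' the minima $\psi_{p+2}(s),\ldots,\psi_d(s)$ that are absent from both sides of \eqref{eq:Psi_sandwich}.
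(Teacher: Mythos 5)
Your argument is correct and is essentially the paper's own proof: both halves rest on the monotonicity $\psi_1(s)\le\cdots\le\psi_d(s)$ together with $\Psi_d(s)\le0$ from Proposition \ref{prop:mink}, the paper phrasing this as the sandwich $\frac1p\Psi_p(s)\le\psi_{p+1}(s)\le\frac{-1}{d-p}\Psi_p(s)$ while you write the same bounds in unnormalized form. No substantive difference.
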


  \begin{proof}
    In view of \eqref{eq:mink}, it follows from the inequalities $\psi_i(s)\leq\psi_{i+1}(s)$, $i=1,\ldots,d-1$, that
    \[ \frac 1p\sum_{i=1}^p\psi_i(s)\leq\psi_{p+1}(s)\leq\frac{-1}{d-p}\sum_{i=1}^p\psi_i(s), \]
    which immediately implies \eqref{eq:Psi_sandwich}.
  \end{proof}

  Taking the $\liminf$ and the $\limsup$ of all the sides of \eqref{eq:Psi_sandwich}, we get

  \begin{corollary} \label{cor:Psi_inter_dyson}
    For any $\La$ and $\gT$ and any $p$ within the range $1\leq p\leq d-2$ we have
    \begin{equation} \label{eq:Psi_inter_dyson}
      \frac{p+1}{p}\bPsi_p\leq\bPsi_{p+1}\leq\frac{d-p-1}{d-p}\bPsi_p
      \qquad\text{ and }\qquad
      \frac{p+1}{p}\aPsi_p\leq\aPsi_{p+1}\leq\frac{d-p-1}{d-p}\aPsi_p.
    \end{equation}
  \end{corollary}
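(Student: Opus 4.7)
The plan is to apply $\liminf_{s\to+\infty}$ and $\limsup_{s\to+\infty}$ separately to each leg of the pointwise chain of inequalities \eqref{eq:Psi_sandwich} furnished by Corollary \ref{cor:precise_inter_dyfel}. The only facts needed beyond monotonicity of $\liminf$ and $\limsup$ (that is, $f\leq g$ pointwise implies $\liminf f\leq\liminf g$ and $\limsup f\leq\limsup g$) are that positive scalars pull outside: $\liminf(cf)=c\liminf f$ and $\limsup(cf)=c\limsup f$ whenever $c>0$. For $1\leq p\leq d-2$ both coefficients $\frac{p+1}{p}$ and $\frac{d-p-1}{d-p}$ are strictly positive, so this applies.

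First I would take $\liminf_{s\to+\infty}$ of the left inequality $\frac{p+1}{p}\Psi_p(s)\leq\Psi_{p+1}(s)$, pulling the positive constant past the $\liminf$, to obtain $\frac{p+1}{p}\bPsi_p\leq\bPsi_{p+1}$. Then I would do the same to the right inequality $\Psi_{p+1}(s)\leq\frac{d-p-1}{d-p}\Psi_p(s)$ to obtain $\bPsi_{p+1}\leq\frac{d-p-1}{d-p}\bPsi_p$. Replacing $\liminf$ by $\limsup$ throughout and rerunning the same two-step argument gives the analogous pair of inequalities for $\aPsi_p$ and $\aPsi_{p+1}$. Assembling these four inequalities yields \eqref{eq:Psi_inter_dyson}.

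There is no serious obstacle; the statement is a formal consequence of Corollary \ref{cor:precise_inter_dyfel}, and indeed the author's phrase \emph{``Taking the $\liminf$ and the $\limsup$ of all the sides of \eqref{eq:Psi_sandwich}''} already summarises the whole argument. The one small pitfall to avoid is mixing $\liminf$ and $\limsup$ across a single chain of inequalities: from $A(s)\leq B(s)\leq C(s)$ one only legitimately derives $\liminf A\leq\liminf B\leq\liminf C$ and $\limsup A\leq\limsup B\leq\limsup C$, so each of the four desired bounds must be read off the appropriate one of these two chains with the consistent choice of limit operation.
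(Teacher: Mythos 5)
Your proposal is correct and is exactly the paper's argument: the paper proves this corollary by the single remark ``Taking the $\liminf$ and the $\limsup$ of all the sides of \eqref{eq:Psi_sandwich}, we get,'' which is precisely the two-chain, constant-pulling procedure you spell out. Your caution about not mixing $\liminf$ with $\limsup$ across one chain is the right (and only) point of care.
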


  Applying consequently \eqref{eq:Psi_inter_dyson} we get

  \begin{corollary} \label{cor:Psi_dyson}
    For any $\La$ and $\gT$ we have
    \begin{equation} \label{eq:Psi_dyson}
      (d-1)\bPsi_1\leq\bPsi_{d-1}\leq\frac{\bPsi_1}{d-1}
      \qquad\text{ and }\qquad
      (d-1)\aPsi_1\leq\aPsi_{d-1}\leq\frac{\aPsi_1}{d-1}.
    \end{equation}
  \end{corollary}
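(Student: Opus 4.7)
The plan is to apply Corollary~\ref{cor:Psi_inter_dyson} iteratively along $p = 1, 2, \ldots, d-1$. Concretely, I would establish by induction on $p$ the two-sided bound
\[ p\,\bPsi_1 \;\leq\; \bPsi_p \;\leq\; \frac{d-p}{d-1}\,\bPsi_1, \qquad 1 \leq p \leq d-1, \]
and the analogous statement with $\bPsi$ replaced by $\aPsi$. Taking $p = d-1$ in both chains yields $(d-1)\bPsi_1 \leq \bPsi_{d-1} \leq \bPsi_1/(d-1)$, which is precisely \eqref{eq:Psi_dyson}, and likewise for the upper exponents.

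The base case $p=1$ is trivial. For the inductive step, I would multiply the two halves of the hypothesis $p\,\bPsi_1 \leq \bPsi_p \leq \tfrac{d-p}{d-1}\bPsi_1$ by the non-negative constants $\tfrac{p+1}{p}$ and $\tfrac{d-p-1}{d-p}$ respectively, and then combine with the already-established inequalities
\[ \tfrac{p+1}{p}\bPsi_p \;\leq\; \bPsi_{p+1} \;\leq\; \tfrac{d-p-1}{d-p}\bPsi_p \]
from \eqref{eq:Psi_inter_dyson}. The key bookkeeping is the collapsing identity $\tfrac{d-p-1}{d-p}\cdot\tfrac{d-p}{d-1} = \tfrac{d-p-1}{d-1}$ on the right and $\tfrac{p+1}{p}\cdot p = p+1$ on the left, which is exactly the telescoping that produces the form of the hypothesis at index $p+1$.

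There is no real obstacle here: because the multiplicative factors $\tfrac{p+1}{p}$ and $\tfrac{d-p-1}{d-p}$ are non-negative, the induction preserves the direction of inequalities irrespective of the sign of $\bPsi_1$ (which in general can be positive, negative, or zero, as already noted in the discussion around Proposition~\ref{prop:mink}). The argument for $\aPsi_p$ is word-for-word identical, since Corollary~\ref{cor:Psi_inter_dyson} supplies the same inequalities for the upper exponents. The only thing to verify is the telescoping computation, which is routine.
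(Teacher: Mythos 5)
Your proposal is correct and is essentially the paper's own argument: the paper proves Corollary~\ref{cor:Psi_dyson} precisely by applying \eqref{eq:Psi_inter_dyson} consecutively for $p=1,\ldots,d-2$, which is exactly your induction with the telescoping products $\tfrac{p+1}{p}\cdot p=p+1$ and $\tfrac{d-p-1}{d-p}\cdot\tfrac{d-p}{d-1}=\tfrac{d-p-1}{d-1}$. You have merely written out explicitly the intermediate bound $p\,\bPsi_1\leq\bPsi_p\leq\tfrac{d-p}{d-1}\bPsi_1$ that the paper leaves implicit.
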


  Another simple corollary to Proposition \ref{prop:mink} is the following statement.

  \begin{corollary} \label{cor:Psi_and_psi}
    For any $\La$ and $\gT$ we have
    \begin{equation} \label{eq:Psi_and_psi}
      \bPsi_{d-1}=-\apsi_d\qquad\text{ and }\qquad\aPsi_{d-1}=-\bpsi_d.
    \end{equation}
  \end{corollary}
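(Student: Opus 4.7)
The plan is to reduce everything to the identity $\Psi_{d-1}(s) = \Psi_d(s) - \psi_d(s)$, which follows directly from the definition $\Psi_p(s) = \sum_{i=1}^p \psi_i(s)$. Combined with Proposition \ref{prop:mink}, this expresses $\Psi_{d-1}(s)$ as $-\psi_d(s)$ plus a term that tends to $0$ as $s \to +\infty$, after which the corollary is a one-line manipulation of $\liminf$ and $\limsup$.

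More precisely, I would first write
\[
   \Psi_{d-1}(s) = -\psi_d(s) + \Psi_d(s),
\]
and then invoke \eqref{eq:mink} from Proposition \ref{prop:mink} to conclude that $\Psi_d(s) = O(s^{-1})$, so $\Psi_d(s) \to 0$. Consequently $\Psi_{d-1}(s) + \psi_d(s) \to 0$.

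Taking $\liminf$ on both sides of $\Psi_{d-1}(s) = -\psi_d(s) + o(1)$ and using the elementary identity $\liminf_{s \to +\infty}(-f(s)) = -\limsup_{s \to +\infty} f(s)$, I get
\[
   \bPsi_{d-1} = \liminf_{s\to+\infty}(-\psi_d(s)) = -\limsup_{s\to+\infty}\psi_d(s) = -\apsi_d.
\]
Symmetrically, taking $\limsup$ yields $\aPsi_{d-1} = -\bpsi_d$.

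There is essentially no obstacle: the whole argument rests on the fact that $\Psi_d$ vanishes in the limit, which is exactly the content of Proposition \ref{prop:mink}. The only thing to be mildly careful about is the sign flip when passing from $\limsup$ to $\liminf$ through the factor $-1$, but this is standard.
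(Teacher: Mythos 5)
Your proof is correct and is exactly the argument the paper intends: the corollary is stated as an immediate consequence of Proposition \ref{prop:mink} via $\Psi_{d-1}(s)=\Psi_d(s)-\psi_d(s)=-\psi_d(s)+o(1)$, followed by the standard $\liminf$/$\limsup$ sign flip. Nothing to add.
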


  As we shall see later, the first inequalities of \eqref{eq:Psi_dyson} generalize Khintchine's and Dyson's transference inequalities.

  \section{Schmidt--Summerer's exponents of the second type from the point of view of multilinear algebra} \label{sec:multi_schmex}

  As before, let us consider the space $\wedge^p(\R^d)$ as the $\binom dp$-dimensional Euclidean space with the orthonormal basis consisting of the multivectors
  \[ \vec e_{i_1}\wedge\ldots\wedge\vec e_{i_p},\qquad 1\leq i_1<\ldots<i_p\leq d. \]
  Let us order the set of the $p$-element subsets of $\{1,\ldots,d\}$ lexicographically and denote the $j$-th subset by $\sigma_j$. To each $d$-tuple $\pmb\tau=(\tau_1,\ldots,\tau_d)$ let us associate the $r$-tuple
  \begin{equation} \label{eq:T_hat}
    \widehat{\pmb\tau}=\Big(\widehat\tau_1,\ldots,\widehat\tau_r\Big),\qquad\widehat\tau_j=\sum_{i\in\sigma_j}\tau_i,\qquad r={\binom dp}.
  \end{equation}
  Thus, a path $\gT:s\to\pmb\tau(s)$ leads us by \eqref{eq:T_hat} to the path $\widehat\gT:s\to\widehat{\pmb\tau}(s)$ also satisfying the condition
  \[ \widehat\tau_1(s)+\ldots+\widehat\tau_r(s)=0,\quad\text{ for all }s. \]
  Finally, given a lattice $\La\subset\R^d$, let us associate to it the lattice $\widehat\La=\wedge^p(\La)$.
%  consisting of all the linear combinations with integer coefficients of all the multivectors $\vec v_1\wedge\ldots\wedge\vec v_p$ such that $\vec v_1,\ldots,\vec v_p\in\La$.

  \begin{proposition} \label{prop:Psi_p_is_Psi_1}
    For any $\La$ and $\gT$ we have
    \[ \bPsi_p(\La,\gT)=\bPsi_1(\widehat\La,\widehat\gT)=\bpsi_1(\widehat\La,\widehat\gT)
    \quad\text{ and }\quad
    \aPsi_p(\La,\gT)=\aPsi_1(\widehat\La,\widehat\gT)=\apsi_1(\widehat\La,\widehat\gT). \]
  \end{proposition}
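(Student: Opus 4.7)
The second equality in each of the two displays, namely $\bPsi_1(\widehat\La,\widehat\gT)=\bpsi_1(\widehat\La,\widehat\gT)$ and $\aPsi_1(\widehat\La,\widehat\gT)=\apsi_1(\widehat\La,\widehat\gT)$, is immediate from Definition \ref{def:schmidt_Psi} applied with $p=1$, since then $\Psi_1=\psi_1$. The substantive content of the proposition is therefore the identification of $\bPsi_p(\La,\gT)$ with $\bpsi_1(\widehat\La,\widehat\gT)$, and of $\aPsi_p(\La,\gT)$ with $\apsi_1(\widehat\La,\widehat\gT)$.

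My plan is to establish the pointwise estimate
\[
\psi_1(\widehat\La,\widehat\gT,s)=\Psi_p(\La,\gT,s)+O(s^{-1}),
\]
with an implied constant depending only on $d$ and $p$; the conclusion then follows by taking $\liminf$ and $\limsup$ as $s\to+\infty$. Writing $\cB(s)=D_{\pmb\tau(s)}\cB_\infty^d$ and $\widehat\cB(s)=D_{\widehat{\pmb\tau}(s)}\cB_\infty^r\subset\wedge^p(\R^d)$, the estimate is equivalent (after multiplying both sides by $s$) to
\[
\ln\lambda_1(\widehat\cB(s))=\sum_{i=1}^p\ln\lambda_i(\cB(s))+O(1),
\]
where the minimum on the left is taken with respect to $\widehat\La=\wedge^p(\La)$. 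To obtain this I will combine two ingredients: Mahler's classical theorem on compound convex bodies, and a direct geometric comparison between $\widehat\cB(s)$ and the $p$-th compound body of $\cB(s)$.

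Introduce the $p$-th compound body $K_p\subset\wedge^p(\R^d)$, that is, the convex hull of all decomposable multivectors $\vec v_1\wedge\ldots\wedge\vec v_p$ with $\vec v_i\in\cB(s)$. Mahler's theorem asserts that $\lambda_1(K_p)\asymp\lambda_1(\cB(s))\cdots\lambda_p(\cB(s))$ with respect to $\widehat\La$, with implied constants depending only on $d$ and $p$. To compare $K_p$ with $\widehat\cB(s)$, note that the extremal points of $\cB(s)$ are the $2^d$ vectors $(\pm e^{\tau_1(s)},\ldots,\pm e^{\tau_d(s)})$, so the $\vec E_\sigma$-coordinate of any $p$-fold wedge of them is a $p\times p$ minor of a $\pm1$-matrix times $\prod_{i\in\sigma}e^{\tau_i(s)}=\exp\bigl(\sum_{i\in\sigma}\tau_i(s)\bigr)$; such minors are bounded by $p!$, whence $K_p\subseteq p!\,\widehat\cB(s)$. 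Conversely, for every $\sigma$ and every choice of signs one has $\pm\bigwedge_{i\in\sigma}(e^{\tau_i(s)}\vec e_i)=\pm\exp\bigl(\sum_{i\in\sigma}\tau_i(s)\bigr)\vec E_\sigma\in K_p$, so $K_p$ contains the cross-polytope on these $2r$ vertices, which in turn contains $r^{-1}\widehat\cB(s)$. These sandwich inclusions alter $\lambda_1$ only by constants and, together with Mahler's estimate, deliver the required identity. The only non-elementary ingredient is the compound body theorem; the rest is a direct unwinding of the definitions, so this is where I expect the real work to lie.
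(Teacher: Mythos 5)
Your proof is correct and follows essentially the same route as the paper: both reduce the statement to the comparison $\lambda_1\bigl(D_{\widehat{\pmb\tau}(s)}\cB_\infty^r\bigr)\asymp\prod_{i=1}^p\lambda_i\bigl(D_{\pmb\tau(s)}\cB_\infty^d\bigr)$ via Mahler's theorem on the first minimum of the $p$-th compound body, then take logarithms, divide by $s$, and pass to $\liminf$ and $\limsup$. The only difference is that you verify the sandwich between the compound body and the pseudo-compound parallelepiped $D_{\widehat{\pmb\tau}(s)}\cB_\infty^r$ by an explicit minor bound and cross-polytope inclusion, whereas the paper simply cites this comparison from Mahler's theory.
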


  \begin{proof}
    Let us denote by $\lambda_i(M)$ the $i$-th successive minimum of a body $M$ with respect to $\La$ if $M\subset\R^d$ and with respect to $\widehat\La$ if $M\in\wedge^p(\R^d)$.

    The matrix $D_{\widehat{\pmb\tau}}$ is the $p$-th compound of $D_{\pmb\tau}$:
    \[ D_{\widehat{\pmb\tau}}=D_{\pmb\tau}^{(p)}. \]
    This means that $D_{\widehat{\pmb\tau}}\cB_\infty^r$ is comparable to Mahler's $p$-th compound convex body of $D_{\pmb\tau}\cB_\infty^d$ (see \cite{mahler_compound_I}), i.e. there is a positive constant $c$ depending only on $d$, such that
    \begin{equation} \label{eq:comparable_to_pseudo_compound}
      c^{-1}D_{\widehat{\pmb\tau}}\cB_\infty^r\subset[D_{\pmb\tau}\cB_\infty^d]^{(p)}\subset cD_{\widehat{\pmb\tau}}\cB_\infty^r.
    \end{equation}
    In \cite{schmidt_DA} the set $D_{\widehat{\pmb\tau}}\cB_\infty^r$ is called the $p$-th pseudo-compound parallelepiped for $D_{\pmb\tau}\cB_\infty^d$.

    It follows from Mahler's theory of compound bodies that
    \begin{equation} \label{eq:first_minimum_vs_product}
      \lambda_1\left([D_{\pmb\tau}\cB_\infty^d]^{(p)}\right)\asymp\prod_{i=1}^p\lambda_i\left(D_{\pmb\tau}\cB_\infty^d\right)
    \end{equation}
    with the implied constants depending only on $d$. Combining \eqref{eq:comparable_to_pseudo_compound} and \eqref{eq:first_minimum_vs_product} we get
    \[ \ln\left(\lambda_1\left(D_{\widehat{\pmb\tau}(s)}\cB_\infty^r\right)\right)=
       \sum_{i=1}^p\ln\left(\lambda_i\left(D_{\pmb\tau(s)}\cB_\infty^d\right)\right)+O(1), \]
    whence
    \[ \psi_1(\widehat\La,\widehat\gT,s)=\sum_{i=1}^p\psi_i(\La,\gT,s)+o(1). \]
    It remains to take the $\liminf$ and the $\limsup$ of both sides as $s\to\infty$.
  \end{proof}

  \section{Diophantine exponents in terms of Schmidt--Summerer's exponents} \label{sec:ex_via_schmex}

  Let $\pmb\ell_1,\ldots,\pmb\ell_d$, $\vec e_1,\ldots,\vec e_d$ be as in Section \ref{sec:laurexp}. Set
  \[ T=
     \begin{pmatrix}
       E_m & 0 \\
       \Theta & E_n
     \end{pmatrix}. \]
  Then
  \[ \tr{(T^{-1})}=
     \begin{pmatrix}
       E_m & -\tr\Theta \\
       0 & E_n
     \end{pmatrix}, \]
  so the bases $\pmb\ell_1,\ldots,\pmb\ell_m,\vec e_{m+1},\ldots,\vec e_d$ and $\vec e_1,\ldots,\vec e_m,\pmb\ell_{m+1},\ldots,\pmb\ell_d$ are dual.

  Let us specify a lattice $\La$ and a path $\gT$ as follows. Set
  \begin{equation} \label{eq:La}
    \La=T^{-1}\Z^d=\Big\{ \tr{\Big(\langle\vec e_1,\vec z\rangle,\ldots,\langle\vec e_m,\vec z\rangle,\langle\pmb\ell_{m+1},\vec z\rangle,\ldots,\langle\pmb\ell_d,\vec z\rangle\Big)}\in\R^d \,\Big|\, \vec z\in\Z^d \Big\}
  \end{equation}
  and define $\gT:s\mapsto\pmb\tau(s)$ by
  \begin{equation} \label{eq:path}
    \tau_1(s)=\ldots=\tau_m(s)=s,\quad\tau_{m+1}(s)=\ldots=\tau_d(s)=-ms/n.
  \end{equation}
  Schmidt--Summerer's exponents $\bpsi_p$, $\apsi_p$ corresponding to such $\La$ and $\gT$ and the exponents $\beta_p$, $\alpha_p$ are but two different points of view at the same phenomenon. The same can be said about $\bPsi_p$, $\aPsi_p$ and $\gb_p$, $\ga_p$. It is exposed in the following two Propositions.

  \begin{proposition} \label{prop:belpha_via_psis}
    We have
    \begin{equation} \label{eq:belpha_via_psis}
      (1+\beta_p)(1+\bpsi_p)=(1+\alpha_p)(1+\apsi_p)=d/n.
    \end{equation}
  \end{proposition}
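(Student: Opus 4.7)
The plan is to translate $\beta_p$ and $\alpha_p$ directly into the language of successive minima of $\cB(s)$ with respect to $\La$, and then apply an explicit change of variables on a hyperbola. First, I would describe the body $\cB(s)=D_{\pmb\tau(s)}\cB_\infty^d$ explicitly: it is the parallelepiped $\{\vec w\in\R^d:|w_j|\leq e^s \text{ for } j\leq m,\ |w_j|\leq e^{-ms/n}\text{ for } j>m\}$. In view of the description \eqref{eq:La}, a point of $\La$ belongs to $c\cB(s)$ if and only if it is $T^{-1}\vec z$ for some $\vec z=(\vec x,\vec y)\in\Z^d$ satisfying $|\vec x|\leq ce^s$ and $|\Theta\vec x-\vec y|\leq ce^{-ms/n}$. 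Since $T^{-1}$ is invertible, linear independence of the $\vec z_i\in\Z^d$ transfers to their images in $\La$, so $\lambda_p(\cB(s))$ equals the infimum of $c>0$ for which $p$ linearly independent $\vec z_i\in\Z^d$ satisfy both bounds simultaneously.

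Next, I would perform the change of variables. For any $\eta>-1$, setting $\gamma=(m/n-\eta)/(1+\eta)$ (so that $(1+\gamma)(1+\eta)=d/n$) and $t=e^{(1+\eta)s}$ gives $e^{\eta s}\cdot e^s=t$ and $e^{\eta s}\cdot e^{-ms/n}=t^{-\gamma}$. Hence the statement ``$c=e^{\eta s}$ witnesses $p$ linearly independent lattice points in $c\cB(s)$'' is exactly the same as ``the system $|\vec x|\leq t$, $|\Theta\vec x-\vec y|\leq t^{-\gamma}$ has $p$ linearly independent integer solutions''. In short, the inequality $\psi_p(s)\leq\eta$ at some $s$ corresponds bijectively, via $t=e^{(1+\eta)s}$, to the approximation system at height $t$ succeeding with rate $\gamma$.

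Finally, I would pass to the relevant $\liminf$/$\limsup$. The map $\eta\mapsto\gamma=(m/n-\eta)/(1+\eta)$ is continuous and strictly decreasing on $(-1,\infty)$. Under it, ``arbitrarily large $s$ with $\psi_p(s)<\eta'$'' corresponds to ``arbitrarily large $t$ with $p$ linearly independent integer solutions at rate $(m/n-\eta')/(1+\eta')$'', and vice versa. Combined with the definitions of $\beta_p$ and $\bpsi_p$, this yields both $(1+\beta_p)(1+\bpsi_p)\geq d/n$ (by letting $\eta'\downarrow\bpsi_p$) and $(1+\beta_p)(1+\bpsi_p)\leq d/n$ (by letting $\gamma'\uparrow\beta_p$). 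The argument for $\alpha_p$ and $\apsi_p$ is identical, with ``arbitrarily large'' replaced throughout by ``for all sufficiently large''.

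The only point requiring a brief verification is that as $s\to\infty$ along the relevant subsequences, $t=e^{(1+\psi_p(s))s}$ also tends to infinity, so that the quantifiers over $t$ and over $s$ are genuinely in bijection. This follows from $\vol\cB(s)=2^d$ and Minkowski's second theorem, which keep $\psi_p(s)$ confined to a bounded interval; in particular $\psi_p(s)$ is bounded strictly above $-1$ whenever $\bpsi_p$ and $\apsi_p$ are finite (with the degenerate cases requiring only notational care). Beyond this bookkeeping, the proof is just algebra on the hyperbola $(1+\gamma)(1+\eta)=d/n$.
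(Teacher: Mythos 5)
Your proof is correct and follows essentially the same route as the paper: both translate the system \eqref{eq:belpha_1_definition} into the condition that a dilate of $\cB(s)$ (the paper's $P_\gamma(t)$) contains $p$ linearly independent points of $\La=T^{-1}\Z^d$, and then perform the same hyperbolic reparametrization $(1+\gamma)(1+\eta)=d/n$, $t=e^{(1+\eta)s}$, before passing to $\liminf$/$\limsup$. Your explicit two-sided squeeze and the remark on the quantifier bijection are just a more spelled-out version of the paper's computation with $t'=t^{\frac{n+n\gamma}{d}}$.
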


  \begin{proof}
    The parallelepiped in $\R^d$ defined by \eqref{eq:belpha_1_definition} can be written as
    \begin{equation*}
      M_\gamma(t)=\Big\{ \vec z\in\R^d \,\Big|\,
      \max_{1\leq j\leq m}|\langle\vec e_j,\vec z\rangle|\leq t,\
      \max_{1\leq i\leq n}|\langle\pmb\ell_{m+i},\vec z\rangle|\leq t^{-\gamma}
        \Big\},
    \end{equation*}
    where $\langle\,\cdot\,,\cdot\,\rangle$ is the inner product in $\R^d$.

    Therefore, $\beta_p$ (resp. $\alpha_p$) equals the supremum of the real numbers $\gamma$, such that there are arbitrarily large values of $t$ for which (resp. such that for every $t$ large enough) the parallelepiped $M_\gamma(t)$ contains $p$ linearly independent integer points.

    Hence, considering the parallelepipeds
    \begin{equation} \label{eq:P_gamma}
      P_\gamma(t)=T^{-1}M_\gamma(t)=\Big\{ \vec z\in\R^d \,\Big|\,
      \max_{1\leq j\leq m}|\langle\vec e_j,\vec z\rangle|\leq t,\
      \max_{1\leq i\leq n}|\langle\vec e_{m+i},\vec z\rangle|\leq t^{-\gamma}
      \Big\},
    \end{equation}
    we see that
    \begin{equation} \label{eq:belpha_p_via_parallelepipeds}
      \beta_p=\limsup_{t\to+\infty}\big\{ \gamma\in\R \,\big|\, \lambda_p(P_\gamma(t))=1 \big\}\,,\qquad\alpha_p=\liminf_{t\to+\infty}\big\{ \gamma\in\R \,\big|\, \lambda_p(P_\gamma(t))=1 \big\},
    \end{equation}
    where $\lambda_p(P_\gamma(t))$ is the $p$-th minimum of $P_\gamma(t)$ with respect to $\La$.

    But $P_{m/n}(t)=D_{\pmb\tau(\ln t)}\cB_\infty^d$, so
    \begin{equation} \label{eq:psis_via_parallelepipeds}
      \bpsi_p(\La,\gT)=\liminf_{t\to+\infty}\frac{\ln(\lambda_p(P_{m/n}(t)))}{\ln t}\,,\qquad\apsi_p(\La,\gT)=\limsup_{t\to+\infty}\frac{\ln(\lambda_p(P_{m/n}(t)))}{\ln t}\,.
    \end{equation}

    A simple calculation shows that
    \[ P_\gamma(t)=t^{\frac{m-n\gamma}{d}}P_{m/n}\big(t^{\frac{n+n\gamma}{d}}\big), \]
    i.e.
    \[ \lambda_p(P_\gamma(t))=(t')^{\frac{-m+n\gamma}{n+n\gamma}}\lambda_p\big(P_{m/n}(t')\big) \]
    with $t'=t^{\frac{n+n\gamma}{d}}$. Therefore, the equality
    \[ \lambda_p(P_\gamma(t))=1 \]
    holds if and only if
    \[ 1-\frac{d}{n+n\gamma}+\frac{\ln(\lambda_p(P_{m/n}(t')))}{\ln t'}=0. \]
    Hence, in view of \eqref{eq:belpha_p_via_parallelepipeds}, \eqref{eq:psis_via_parallelepipeds}, we get
    \[ \beta_p=\limsup_{t\to+\infty}\left\{ \frac dn\left( 1+\frac{\ln(\lambda_p(P_{m/n}(t)))}{\ln t} \right)^{-1}-1 \right\}=\frac dn\left(1+\bpsi_p\right)^{-1}-1 \]
    and
    \[ \alpha_p=\liminf_{t\to+\infty}\left\{ \frac dn\left( 1+\frac{\ln(\lambda_p(P_{m/n}(t)))}{\ln t} \right)^{-1}-1 \right\}=\frac dn\left(1+\apsi_p\right)^{-1}-1, \]
    which immediately implies \eqref{eq:belpha_via_psis}.
  \end{proof}

  \begin{proposition} \label{prop:ba_via_Psis}
    Set $\varkappa_p=\min(p,\frac mn(d-p))$. Then
    \begin{equation} \label{eq:ba_via_Psis}
      (1+\gb_p)(\varkappa_p+\bPsi_p)=(1+\ga_p)(\varkappa_p+\aPsi_p)=d/n.
    \end{equation}
  \end{proposition}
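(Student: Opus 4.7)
The plan is to adapt the argument of the proof of Proposition \ref{prop:belpha_via_psis} from $\R^d$ to $\wedge^p(\R^d)$. Proposition \ref{prop:Psi_p_is_Psi_1} already identifies $\bPsi_p(\La, \gT)$ and $\aPsi_p(\La, \gT)$ with the first-type Schmidt--Summerer exponents $\bpsi_1(\widehat\La, \widehat\gT)$ and $\apsi_1(\widehat\La, \widehat\gT)$ of the compound data on $\wedge^p(\R^d)$, while Proposition \ref{prop:ba_substitution}---or more conveniently the coordinate reformulation \eqref{eq:coordinates_sandwiched} established inside its proof---gives an intrinsic description of $\gb_p$ and $\ga_p$. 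The goal is to match these two viewpoints by means of a scaling identity analogous to $P_\gamma(t) = t^{(m - n\gamma)/d} P_{m/n}(t^{(n+n\gamma)/d})$ used in Proposition \ref{prop:belpha_via_psis}.

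First, let $Q_\gamma(t) \subset \wedge^p(\R^d)$ denote the parallelepiped cut out by the inequalities \eqref{eq:coordinates_sandwiched} (with the sup-norm attached to the basis $\vec L_\rho \wedge \vec E_{\rho'}$). By the proof of Proposition \ref{prop:ba_substitution}, the exponents $\gb_p$ and $\ga_p$ equal the $\limsup$ and $\liminf$, respectively, as $t\to\infty$ of those $\gamma$ for which $Q_\gamma(t)$ contains a nonzero element of $\wedge^p(\Z^d)$. Applying $\wedge^p(T^{-1})$ transports this to the lattice $\widehat\La = \wedge^p(T^{-1})\wedge^p(\Z^d)$; in the standard basis of $\wedge^p(\R^d)$ the body $\wedge^p(T^{-1})Q_\gamma(t)$ is then axis-aligned with side $t^{1-(m-j-k_0)(1+\gamma)}$ along the direction $\vec E_\sigma$, where $j = |\sigma\cap\{1,\ldots,m\}|$. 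I would then compare this with the Schmidt--Summerer box $\widehat\cB(s)$, whose side along $\vec E_\sigma$ is $e^{\widehat\tau_\sigma(s)} = e^{(jd/n - pm/n)s}$, and solve for the time $s$ and scaling factor $c$ that make these ratios independent of $\sigma$. Matching the $j$-linear part gives $s = n(1+\gamma)(\ln t)/d$, and matching the constant part gives $c = t^{1 - (1+\gamma)\varkappa_p n/d}$, where the identification $(m-k_0)d/n - pm/n = \varkappa_p$ is obtained by splitting into the cases $p \geq m$ (so $m - k_0 = m$ and $\varkappa_p = m(d-p)/n$) and $p \leq m-1$ (so $m-k_0 = p$ and $\varkappa_p = p$). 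The resulting identity $\wedge^p(T^{-1})Q_\gamma(t) = c\,\widehat\cB(s)$, or equivalently $\wedge^p(T^{-1})Q_{m/n}(t) = t^{1-\varkappa_p}\widehat\cB(\ln t)$, is the exterior-power analogue of the one used for $P_\gamma(t)$.

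Combining this scaling with Proposition \ref{prop:Psi_p_is_Psi_1}, the condition $\lambda_1(Q_\gamma(t), \wedge^p(\Z^d)) = 1$ becomes equivalent, up to bounded multiplicative factors, to $\Psi_p(\La, \gT, s) = d/(n(1+\gamma)) - \varkappa_p + o(1)$ as $t\to\infty$. Solving for $\gamma$ gives $1+\gamma = d/(n(\Psi_p(s) + \varkappa_p)) + o(1)$; since $\gamma \mapsto d/(n(1+\gamma))$ is decreasing, taking $\limsup$ on the left corresponds to taking $\liminf$ on the right, producing $1 + \gb_p = d/(n(\bPsi_p + \varkappa_p))$. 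A symmetric argument yields $1 + \ga_p = d/(n(\aPsi_p + \varkappa_p))$, which together give \eqref{eq:ba_via_Psis}. The main technical obstacle is the scaling step: checking that the ratios of side lengths across all coordinates $\sigma$ reduce to a single factor, and that this factor collapses, in both cases $p \geq m$ and $p \leq m-1$, to the unified expression $\varkappa_p = \min(p, m(d-p)/n)$.
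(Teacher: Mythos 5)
Your proposal is correct and follows essentially the same route as the paper's proof: transport by $\wedge^p(T^{-1})$ to make the bodies axis-aligned in $\wedge^p(\R^d)$, identify them with dilates $c\,D_{\widehat{\pmb\tau}(s)}\cB_\infty^r$ of the Schmidt--Summerer boxes (your case check $(m-k_0)d/n-pm/n=\varkappa_p$ is exactly the computation the paper performs for $\widehat\tau_j(s)$), and finish via Proposition \ref{prop:Psi_p_is_Psi_1} and the monotone change of variables between $\gamma$ and $\psi_1(\widehat\La,\widehat\gT,s)$. The only cosmetic difference is that you derive the scaling identity for all $\gamma$ at once, whereas the paper fixes the reference exponent $\gamma_0=d/(n\varkappa_p)-1$ (for which $c=1$) and then rescales.
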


  \begin{proof}
    Let $\vec L_\sigma$, $\vec E_\sigma$, $\cJ_k$, $\cJ'_k$ be as in Section \ref{sec:laurexp}.

    Since $T^{-1}\pmb\ell_i=\vec e_i$ and $T^{-1}\vec e_j=\vec e_j$, if $1\leq i\leq m$ and $m+1\leq j\leq d$, we have
    \begin{equation} \label{eq:T_deapplied_to_LE}
      (T^{-1})^{(k+k')}(\vec L_\sigma\wedge\vec E_{\sigma'})=\vec E_\sigma\wedge\vec E_{\sigma'},\qquad\text{ for each }
      \sigma\in\cJ_k,\ \sigma'\in\cJ'_{k'},
    \end{equation}
    where $(T^{-1})^{(k+k')}$ is the $(k+k')$-th compound of $T^{-1}$. Furthermore, since $\La=T^{-1}\Z^d$, we have
    \begin{equation} \label{eq:T_and_La}
      \widehat\La=\wedge^p(\La)=(T^{-1})^{(p)}(\wedge^p(\Z^d)).
    \end{equation}
    Hence for each $\vec Z\in\wedge^p(\Z^d)$ and each $\sigma\in\cJ_k$, $\sigma'\in\cJ'_{d-p-k}$ (with $k$ satisfying $k_0\leq k\leq k_1$) we get
    \begin{equation} \label{eq:T_deapplied_to_LEZ}
      |\vec L_\sigma\wedge\vec E_{\sigma'}\wedge\vec Z|=
      |(T^{-1})^{(d-p)}(\vec L_\sigma\wedge\vec E_{\sigma'})\wedge(T^{-1})^{(p)}\vec Z|=
      |\vec E_\sigma\wedge\vec E_{\sigma'}\wedge\vec Z'|,
    \end{equation}
    where $\vec Z'\in\widehat\La$. Here, besides \eqref{eq:T_deapplied_to_LE}, \eqref{eq:T_and_La}, we have made use of the fact that for every $\vec V\in\wedge^p(\R^d)$, $\vec W\in\wedge^{d-p}(\R^d)$ the wedge product $\vec V\wedge\vec W$ is a real number and
    \[ |\vec V\wedge\vec W|=|T^{(p)}\vec V\wedge T^{(d-p)}\vec W|, \]
    provided $\det T=1$.

    We conclude from \eqref{eq:T_deapplied_to_LEZ} and Proposition \ref{prop:ba_substitution} that $\gb_p$ (resp. $\ga_p$) equals the supremum of the real numbers $\gamma$, such that there are arbitrarily large values of $t$ for which (resp. such that for every $t$ large enough) the system of inequalities
    \begin{equation} \label{eq:ba_modified_with_T_applied}
      \max_{\begin{subarray}{c} \sigma\in\cJ_k \\ \sigma'\in\cJ'_{d-p-k} \end{subarray}}
      |\vec E_\sigma\wedge\vec E_{\sigma'}\wedge\vec Z|\leq t^{1-(k-k_0)(1+\gamma)},\qquad k=k_0,\ldots,k_1,
    \end{equation}
    has a nonzero solution in $\vec Z\in\widehat\La$.

    The inequalities \eqref{eq:ba_modified_with_T_applied} define the parallelepiped
    \begin{equation} \label{eq:P_gamma_hat}
      \widehat P_\gamma(t)=\Big\{ \vec Z\in\wedge^p(\R^d) \,\Big|\,
      \max_{\begin{subarray}{c} \sigma\in\cJ_{m-k} \\ \sigma'\in\cJ'_{p-m+k} \end{subarray}}
      |\langle\vec E_\sigma\wedge\vec E_{\sigma'},\vec Z\rangle|\leq t^{1-(k-k_0)(1+\gamma)},\
      k=k_0,\ldots,k_1 \Big\},
    \end{equation}
    where $\langle\,\cdot\,,\cdot\,\rangle$ is the inner product in $\wedge^p(\R^d)$.
    By analogy with \eqref{eq:belpha_p_via_parallelepipeds} we can write
    \begin{equation} \label{eq:ba_via_parallelepipeds}
      \gb_p=\limsup_{t\to+\infty}\Big\{ \gamma\in\R \ \Big|\, \lambda_1\big(\widehat P_\gamma(t)\big)=1 \Big\}\,,\qquad
      \ga_p=\liminf_{t\to+\infty}\Big\{ \gamma\in\R \ \Big|\, \lambda_1\big(\widehat P_\gamma(t)\big)=1 \Big\},
    \end{equation}
    where $\lambda_1\big(\widehat P_\gamma(t)\big)$ is the first minimum of $\widehat P_\gamma(t)$ with respect to $\widehat\La$.

    Consider the path $\widehat\gT$ defined by \eqref{eq:T_hat} for $\gT$. Then
    \[ \widehat\tau_j(s)=\sum_{i\in\sigma_j}\tau_i(s), \]
    and if $\sigma_j\cap\{1,\ldots,m\}\in\cJ_{m-k}$\,, we have
    \[ \widehat\tau_j(s)=(m-k)s-\frac{(p-(m-k))m}{n}s=\left(\frac dn(k_0-k)+\varkappa_p\right)s=(1-(k-k_0)(1+\gamma_0))\ln t, \]
    where
    \[ t=e^{\varkappa_p s},\qquad\gamma_0=\frac{d}{n\varkappa_p}-1. \]
    Hence
    \begin{equation} \label{eq:P_gamma_0_hat_via_unit_cube}
      \widehat P_{\gamma_0}(t)=D_{\widehat{\pmb\tau}(s)}\cB_\infty^r,
    \end{equation}
    where, as before, $r=\binom dp$.

    Thus, similar to \eqref{eq:psis_via_parallelepipeds}, we get
    \begin{equation} \label{eq:hat_psis_via_parallelepipeds}
      \bpsi_1(\widehat\La,\widehat\gT)=\liminf_{t\to+\infty}\frac{\varkappa_p\ln(\lambda_1(\widehat P_{\gamma_0}(t)))}{\ln t}\,,\qquad
      \apsi_1(\widehat\La,\widehat\gT)=\limsup_{t\to+\infty}\frac{\varkappa_p\ln(\lambda_1(\widehat P_{\gamma_0}(t)))}{\ln t}\,.
    \end{equation}

    The rest of the argument is very much the same as the corresponding part of the proof of Proposition \ref{prop:belpha_via_psis}. Let us observe that
    \[ \widehat P_\gamma(t)=t^{1-\frac{1+\gamma}{1+\gamma_0}}\widehat P_{\gamma_0}\big(t^{\frac{1+\gamma}{1+\gamma_0}}\big). \]
    This implies that
    \[ \lambda_1\big(\widehat P_\gamma(t)\big)=(t')^{1-\frac{1+\gamma_0}{1+\gamma}}\lambda_1\big(\widehat P_{\gamma_0}(t')\big) \]
    with $t'=t^{\frac{1+\gamma}{1+\gamma_0}}$. Therefore, the equality
    \[ \lambda_1\big(\widehat P_\gamma(t)\big)=1 \]
    holds if and only if
    \[ 1-\frac{1+\gamma_0}{1+\gamma}+\frac{\ln(\lambda_1(\widehat P_{\gamma_0}(t')))}{\ln t'}=0. \]
    Hence, in view of \eqref{eq:ba_via_parallelepipeds}, \eqref{eq:hat_psis_via_parallelepipeds}, we get
    \[ \gb_p=\limsup_{t\to+\infty}\left\{ (1+\gamma_0)\left( 1+\frac{\ln(\lambda_1(\widehat P_{\gamma_0}(t)))}{\ln t} \right)^{-1}-1 \right\}=
       (1+\gamma_0)\left(1+\varkappa_p^{-1}\bpsi_1(\widehat\La,\widehat\gT)\right)^{-1}-1 \]
    and
    \[ \ga_p=\liminf_{t\to+\infty}\left\{ (1+\gamma_0)\left( 1+\frac{\ln(\lambda_1(\widehat P_{\gamma_0}(t)))}{\ln t} \right)^{-1}-1 \right\}=
       (1+\gamma_0)\left(1+\varkappa_p^{-1}\apsi_1(\widehat\La,\widehat\gT)\right)^{-1}-1. \]
    Thus,
    \[ (1+\gb_p)(\varkappa_p+\bpsi_1(\widehat\La,\widehat\gT))=(1+\ga_p)(\varkappa_p+\apsi_1(\widehat\La,\widehat\gT))=d/n. \]
    It remains to apply Proposition \ref{prop:Psi_p_is_Psi_1}.
  \end{proof}

  \begin{remark} \label{rem:initial_inequalities}
    It follows from \eqref{eq:P_gamma_0_hat_via_unit_cube} that the volume of $\widehat P_{\gamma_0}(t)$ is equal to $2^r$. Hence, by Minkowski's convex body theorem, $\widehat P_{\gamma_0}(t)$ contains a non-zero point of $\widehat\La$. Thus, taking into account \eqref{eq:ba_via_parallelepipeds}, we get
    \[ \gb_p\geq\ga_p\geq\gamma_0=\frac{d}{n\varkappa_p}-1, \]
    or in terms of Schmidt--Summerer's exponents,
    \[ -\varkappa_p\leq\bPsi_p\leq\aPsi_p\leq0. \]
  \end{remark}

  \section{Transposed system} \label{sec:transposed}

  The subspace spanned by $\pmb\ell_{m+1},\ldots,\pmb\ell_d$ is the space of solutions to the system
  \[ -\tr\Theta\vec y=\vec x. \]
  As we noticed in Section \ref{sec:laurexp}, it coincides with the orthogonal complement $\cL^\bot$ for $\cL$. Denote by $\beta_p^\ast$, $\alpha_p^\ast$, $\gb_p^\ast$, $\ga_p^\ast$ the corresponding $p$-th regular and uniform Diophantine exponents of the first and of the second types for the matrix $\tr\Theta$. Obviously, they coincide with the ones corresponding to $-\tr\Theta$. The lattice constructed for $-\tr\Theta$ the very same way $\La$ was constructed for $\Theta$, would be
  \[ \begin{pmatrix}
       E_n & 0 \\
       \tr\Theta & E_m
     \end{pmatrix}\Z^d. \]
  But transposing the first $n$ and the last $m$ coordinates turns this lattice into
  \[ \begin{pmatrix}
       E_m & \tr\Theta \\
       0 & E_n
     \end{pmatrix}\Z^d=\tr T\Z^d=\La^\ast, \]
  which is the lattice, dual for $\La$. For this reason with $\tr\Theta$ we shall associate $\La^\ast$. Now, the most natural way to specify the path determining Schmidt--Summerer's exponents associated to $\tr\Theta$ is to take into account the coordinates permutation just mentioned and consider the path $\gT^\ast:s\to\pmb\tau^\ast(s)$ defined by
  \begin{equation} \label{eq:path_ast}
    \tau^\ast_1(s)=\ldots=\tau^\ast_m(s)=-ns/m,\quad\tau^\ast_{m+1}(s)=\ldots=\tau^\ast_d(s)=s.
  \end{equation}
  Denoting
  \[ \bpsi_p^\ast=\bpsi_p(\La^\ast,\gT^\ast),\quad \apsi_p^\ast=\apsi_p(\La^\ast,\gT^\ast), \]
  \[ \bPsi_p^\ast=\bPsi_p(\La^\ast,\gT^\ast),\quad \aPsi_p^\ast=\aPsi_p(\La^\ast,\gT^\ast), \]
  we see that any statement proved for an arbitrary $\Theta$ concerning the quantities $\beta_p$, $\alpha_p$, $\bpsi_p$, $\apsi_p$, $\bPsi_p$, $\aPsi_p$ remains valid if $\Theta$ is substituted by $\tr\Theta$, and the quantities $n$, $m$, $\beta_p$, $\alpha_p$, $\bpsi_p$, $\apsi_p$ are substituted by $m$, $n$, $\beta_p^\ast$, $\alpha_p^\ast$, $\bpsi_p^\ast$, $\apsi_p^\ast$, $\bPsi_p^\ast$, $\aPsi_p^\ast$, respectively. Particularly, the analogues of Propositions \ref{prop:belpha_via_psis}, \ref{prop:ba_via_Psis} hold:

  \begin{proposition} \label{prop:starred_belpha_via_starred_psis}
    We have
    \begin{equation} \label{eq:starred_belpha_via_starred_psis}
      (1+\beta_p^\ast)(1+\bpsi_p^\ast)=(1+\alpha_p^\ast)(1+\apsi_p^\ast)=d/m.
    \end{equation}
  \end{proposition}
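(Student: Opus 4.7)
The plan is to obtain Proposition \ref{prop:starred_belpha_via_starred_psis} as a direct transposition of Proposition \ref{prop:belpha_via_psis}, by running the earlier argument verbatim with $\Theta$ replaced by $\tr\Theta$ and then identifying the resulting lattice and path with $\La^\ast$ and $\gT^\ast$. The key conceptual point is already pointed out in the paragraph following \eqref{eq:path_ast}: the exponents $\beta_p$, $\alpha_p$, $\bpsi_p$, $\apsi_p$ were defined for an arbitrary matrix, so applying Proposition \ref{prop:belpha_via_psis} to the $m\times n$ matrix $-\tr\Theta$ yields immediately the identity
\[
(1+\beta_p^{-\tr\Theta})(1+\bpsi_p^{-\tr\Theta})=(1+\alpha_p^{-\tr\Theta})(1+\apsi_p^{-\tr\Theta})=d/m,
\]
the right-hand side being $d/m$ rather than $d/n$ because the roles of $n$ and $m$ are swapped for the transposed matrix.

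Next I would check that the two sides of this identity agree with the starred quantities. The equality $\beta_p^{-\tr\Theta}=\beta_p^\ast$ and $\alpha_p^{-\tr\Theta}=\alpha_p^\ast$ is immediate from Definition \ref{def:belpha_p}, since the sup-norm is invariant under the sign change $\tr\Theta\mapsto-\tr\Theta$. For the Schmidt--Summerer side, running the construction of Section \ref{sec:ex_via_schmex} with $-\tr\Theta$ produces the lattice
\[
\begin{pmatrix} E_n & 0 \\ \tr\Theta & E_m \end{pmatrix}\Z^d
\]
and the path with the first $n$ coordinates equal to $s$ and the last $m$ equal to $-ns/m$. Permuting the first $n$ coordinates with the last $m$ turns this lattice into $\La^\ast$ (as noted in Section \ref{sec:transposed}) and the path into $\gT^\ast$ from \eqref{eq:path_ast}. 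Since this permutation is an isometry of $\R^d$ (with respect to the sup-norm) and preserves the lattice structure, the successive minima are unchanged, so $\bpsi_p^{-\tr\Theta}=\bpsi_p^\ast$ and $\apsi_p^{-\tr\Theta}=\apsi_p^\ast$.

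Combining the two identifications with the transposed form of Proposition \ref{prop:belpha_via_psis} yields \eqref{eq:starred_belpha_via_starred_psis}. The only genuine thing to verify is that the coordinate permutation does what is claimed for both $\La$ and $\gT$; this is essentially bookkeeping, but it is the step where one must be careful. Aside from that there is no real obstacle: the statement is exactly Proposition \ref{prop:belpha_via_psis} applied to $\tr\Theta$, reinterpreted through the dictionary established in Section \ref{sec:transposed}.
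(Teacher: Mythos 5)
Your proposal is correct and is essentially the paper's own argument: the paper proves this proposition by exactly the substitution principle you describe, observing that the lattice and path built for $-\tr\Theta$ become $\La^\ast$ and $\gT^\ast$ after permuting the first $n$ and last $m$ coordinates, and then invoking Proposition \ref{prop:belpha_via_psis} with $n$ and $m$ swapped. The identification details you flag as "bookkeeping" are precisely what the paper also treats as immediate.
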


  \begin{proposition} \label{prop:starred_ba_via_starred_Psis}
    Set $\varkappa_p^\ast=\min(p,\frac nm(d-p))$. Then
    \begin{equation} \label{eq:starred_ba_via_starred_Psis}
      (1+\gb_p^\ast)(\varkappa_p^\ast+\bPsi_p^\ast)=(1+\ga_p^\ast)(\varkappa_p^\ast+\aPsi_p^\ast)=d/m.
    \end{equation}
  \end{proposition}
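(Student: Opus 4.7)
The plan is to derive this Proposition directly from Proposition \ref{prop:ba_via_Psis} applied to the transposed matrix. The paragraph immediately preceding the statement already articulates the substitution principle: every assertion valid for $\Theta$ with data $(n,m,\La,\gT,\gb_p,\ga_p,\bPsi_p,\aPsi_p)$ remains valid for $\tr\Theta$ with data $(m,n,\La^\ast,\gT^\ast,\gb_p^\ast,\ga_p^\ast,\bPsi_p^\ast,\aPsi_p^\ast)$. Under this swap the quantity $\varkappa_p=\min(p,\tfrac{m}{n}(d-p))$ that appears in Proposition \ref{prop:ba_via_Psis} becomes $\min(p,\tfrac{n}{m}(d-p))=\varkappa_p^\ast$, while the right-hand side $d/n$ becomes $d/m$, and the identity \eqref{eq:starred_ba_via_starred_Psis} is obtained at once.

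The only thing to verify is that the lattice and path that emerge when one runs the construction of Section \ref{sec:ex_via_schmex} starting from $\tr\Theta$ (equivalently $-\tr\Theta$, since those produce the same Diophantine exponents) really coincide, after the natural coordinate transposition exchanging the first $n$ and the last $m$ components, with the $\La^\ast$ and $\gT^\ast$ fixed in Section \ref{sec:transposed}. For the lattice this is exactly the computation already carried out in that section, which identifies the ``raw'' lattice with $\tr T\Z^d=\La^\ast$; for the path it is the very motivation behind the definition \eqref{eq:path_ast}. Since the coordinate transposition acts as an isometry of the sup-norm and carries one lattice onto the other, successive minima are preserved, so the starred analogues of \eqref{eq:ba_via_parallelepipeds}, \eqref{eq:hat_psis_via_parallelepipeds}, and \eqref{eq:P_gamma_0_hat_via_unit_cube} hold verbatim.

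The main obstacle is therefore not analytic but purely notational: one has to check that the roles of $p$ and $d-p$, and of $n$ and $m$, flip correctly in the definitions of $\cJ_k$, $\cJ'_k$, $k_0$, $k_1$, and in the exponent $\tfrac{m-n\gamma}{d}$ that drives the rescaling identity at the end of the proof of Proposition \ref{prop:ba_via_Psis}. Once this bookkeeping is done, the chain of implications ending in the expressions for $\gb_p$ and $\ga_p$ in terms of $\bpsi_1(\widehat\La,\widehat\gT)$ and $\apsi_1(\widehat\La,\widehat\gT)$ goes through word for word, and a final appeal to Proposition \ref{prop:Psi_p_is_Psi_1} (applied now to $\La^\ast$ and $\gT^\ast$) replaces these first-type quantities by $\bPsi_p^\ast$ and $\aPsi_p^\ast$, completing the proof.
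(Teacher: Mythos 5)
Your proposal is correct and matches the paper exactly: the paper gives no separate proof of Proposition \ref{prop:starred_ba_via_starred_Psis}, deriving it purely from the substitution principle (swap $n\leftrightarrow m$, $\Theta\to\tr\Theta$, $\La\to\La^\ast$, $\gT\to\gT^\ast$) applied to Proposition \ref{prop:ba_via_Psis}, with the identification of the ``raw'' lattice for $\tr\Theta$ with $\tr T\Z^d=\La^\ast$ after the coordinate transposition justified just as you describe. Your additional remark that the transposition is a sup-norm isometry preserving successive minima is the right (implicit) justification for why the substitution is legitimate.
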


  Further, same as \eqref{eq:psis_via_parallelepipeds}, we get
  \begin{equation} \label{eq:starred_psis_via_parallelepipeds}
    \bpsi_p^\ast=\liminf_{t\to+\infty}\frac{\ln(\lambda_p^\ast(P_{m/n}(t^{-n/m})))}{\ln t}\,,\qquad\apsi_p^\ast=\limsup_{t\to+\infty}\frac{\ln(\lambda_p^\ast(P_{m/n}(t^{-n/m})))}{\ln t}\,,
  \end{equation}
  where $\lambda_p^\ast$ denotes the $p$-th minimum with respect to $\La^\ast$.

  Let us show that $\bpsi_p^\ast$, $\apsi_p^\ast$ are closely connected with $\bpsi_{d-p}$, $\apsi_{d-p}$ (which, as before, are related to $\La$ and the path $\gT$ defined by \eqref{eq:path}). It follows from the definition of $P_\gamma(t)$ that there is a positive constant $c$ depending only on $\Theta$, such that
  \[ c^{-1}P_\gamma(t^{-1})\subseteq P_\gamma(t)^\ast\subseteq cP_\gamma(t^{-1}), \]
  where $P_\gamma(t)^\ast$ is the polar reciprocal body for $P_\gamma(t)$. Furthermore, it follows from Mahler's theory that
  \[ \lambda_p^\ast(P_\gamma(t)^\ast)\lambda_{d+1-p}(P_\gamma(t))\asymp1 \]
  with the implied constants depending only on $d$. Hence
  \begin{equation} \label{eq:mahler_with_no_ast}
    \lambda_p^\ast(P_\gamma(t^{-1}))\lambda_{d+1-p}(P_\gamma(t))\asymp1
  \end{equation}
  Combining \eqref{eq:starred_psis_via_parallelepipeds}, \eqref{eq:mahler_with_no_ast} and \eqref{eq:psis_via_parallelepipeds} with $p$ substituted by $d+1-p$ we get

  \begin{proposition} \label{prop:starred_psis_via_psis}
    We have
    \[ \bpsi_p^\ast=-\dfrac nm\apsi_{d+1-p}\quad\text{ and }\quad\apsi_p^\ast=-\dfrac nm\bpsi_{d+1-p}\,. \]
  \end{proposition}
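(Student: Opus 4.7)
The plan is to simply combine the three displayed facts the paper has already established in the paragraph immediately preceding the proposition: the formula \eqref{eq:starred_psis_via_parallelepipeds} for $\bpsi_p^\ast$, $\apsi_p^\ast$ as $\liminf$ / $\limsup$ of $\ln(\lambda_p^\ast(P_{m/n}(t^{-n/m})))/\ln t$; the Mahler-type reciprocity \eqref{eq:mahler_with_no_ast} relating $\lambda_p^\ast(P_\gamma(t^{-1}))$ to $\lambda_{d+1-p}(P_\gamma(t))$; and the formula \eqref{eq:psis_via_parallelepipeds} for $\bpsi_q$, $\apsi_q$ in terms of $\lambda_q(P_{m/n}(t))$. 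The entire argument is a bookkeeping exercise in logarithms and a change of parameter.

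Concretely, I would first substitute $t \mapsto t^{n/m}$ into \eqref{eq:mahler_with_no_ast} to obtain
\[
 \lambda_p^\ast\bigl(P_{m/n}(t^{-n/m})\bigr)\cdot\lambda_{d+1-p}\bigl(P_{m/n}(t^{n/m})\bigr)\asymp 1,
\]
with constants depending only on $d$ and $\Theta$. Taking logarithms and dividing by $\ln t$ gives
\[
 \frac{\ln\lambda_p^\ast(P_{m/n}(t^{-n/m}))}{\ln t}
 = -\,\frac{n}{m}\cdot\frac{\ln\lambda_{d+1-p}(P_{m/n}(t'))}{\ln t'} + O(1/\ln t),
\]
where $t' = t^{n/m}\to\infty$ as $t\to\infty$.

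Now I would take $\liminf$ and $\limsup$ of both sides as $t\to\infty$. Since $\liminf(-f)=-\limsup f$ and conversely, combining with \eqref{eq:starred_psis_via_parallelepipeds} on the left and \eqref{eq:psis_via_parallelepipeds} (with $p$ replaced by $d+1-p$) on the right yields
\[
 \bpsi_p^\ast = -\frac{n}{m}\apsi_{d+1-p},\qquad \apsi_p^\ast = -\frac{n}{m}\bpsi_{d+1-p},
\]
as desired.

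The only potentially delicate point is the sign swap between $\liminf$ and $\limsup$ caused by the minus sign in front of $n/m$, but this is standard; the $O(1/\ln t)$ error term vanishes in the limit and does not affect either extremum. Since \eqref{eq:mahler_with_no_ast} already encapsulates the nontrivial input (Mahler's reciprocity between a convex body and its polar applied to the lattices $\La$ and $\La^\ast$), no further geometric work is required and there is no real obstacle to overcome.
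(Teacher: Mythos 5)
Your proposal is correct and is essentially identical to the paper's own argument: the paper likewise obtains the proposition by combining \eqref{eq:starred_psis_via_parallelepipeds}, \eqref{eq:mahler_with_no_ast}, and \eqref{eq:psis_via_parallelepipeds} with $p$ replaced by $d+1-p$, the only difference being that you spell out the change of variable $t'=t^{n/m}$ and the $\liminf$/$\limsup$ sign swap that the paper leaves implicit.
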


  \begin{corollary} \label{cor:starred_belpha_via_psis}
    We have
    \[ (1+\beta_p^\ast)(m-n\apsi_{d+1-p})=(1+\alpha_p^\ast)(m-n\bpsi_{d+1-p})=d. \]
  \end{corollary}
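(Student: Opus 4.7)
The plan is to obtain the Corollary by direct algebraic combination of the two immediately preceding results, so there is no substantive obstacle — only a careful bookkeeping of the constants.

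First I would invoke Proposition \ref{prop:starred_belpha_via_starred_psis}, which supplies the two identities
\[ (1+\beta_p^\ast)(1+\bpsi_p^\ast)=d/m\quad\text{and}\quad(1+\alpha_p^\ast)(1+\apsi_p^\ast)=d/m. \]
Next I would apply Proposition \ref{prop:starred_psis_via_psis} to express the starred Schmidt--Summerer exponents in terms of the unstarred ones, namely $\bpsi_p^\ast=-\tfrac{n}{m}\apsi_{d+1-p}$ and $\apsi_p^\ast=-\tfrac{n}{m}\bpsi_{d+1-p}$.

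Substituting these into the previous factors yields
\[ 1+\bpsi_p^\ast=\frac{m-n\apsi_{d+1-p}}{m}\,,\qquad 1+\apsi_p^\ast=\frac{m-n\bpsi_{d+1-p}}{m}\,. \]
Plugging back and clearing the denominator $m$ then gives both of the claimed equalities
\[ (1+\beta_p^\ast)(m-n\apsi_{d+1-p})=(1+\alpha_p^\ast)(m-n\bpsi_{d+1-p})=d, \]
which completes the argument. The only thing to double-check is that the $\liminf$/$\limsup$ assignments match up correctly under the substitution: $\bpsi_p^\ast$ (a $\liminf$) pairs with $\apsi_{d+1-p}$ (a $\limsup$), matching the pairing of $\beta_p^\ast$ (regular) with $\apsi_{d+1-p}$ in the statement, and analogously for the uniform/$\liminf$ pairing. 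No further work is required.
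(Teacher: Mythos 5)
Your argument is correct and is exactly the paper's proof: the paper derives this Corollary by combining Proposition \ref{prop:starred_belpha_via_starred_psis} with Proposition \ref{prop:starred_psis_via_psis}, and your substitution and clearing of the factor $m$ fills in the (trivial) algebra. The pairing of $\liminf$/$\limsup$ quantities you double-check at the end is indeed the only point requiring care, and you have it right.
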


  \begin{proof}
    Follows from Propositions \ref{prop:starred_belpha_via_starred_psis} and \ref{prop:starred_psis_via_psis}.
  \end{proof}

  \begin{corollary} \label{cor:starred_times_nonstarred_equals_one}
    We have
    \[ \alpha_{d+1-p}\beta_p^\ast=1\quad\text{ and }\quad\alpha_{d+1-p}^\ast\beta_p=1. \]
  \end{corollary}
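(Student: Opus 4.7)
The plan is to combine Corollary \ref{cor:starred_belpha_via_psis} with Proposition \ref{prop:belpha_via_psis} (applied at index $d+1-p$) and to clear denominators using $n+m=d$. The second identity will then follow from the first by the $\Theta\leftrightarrow\tr\Theta$ symmetry discussed in Section \ref{sec:transposed}.

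First, I would use Proposition \ref{prop:belpha_via_psis} at index $d+1-p$ to solve for $\apsi_{d+1-p}$:
\[
1+\apsi_{d+1-p}=\frac{d}{n(1+\alpha_{d+1-p})},
\qquad\text{hence}\qquad
n\apsi_{d+1-p}=\frac{m-n\alpha_{d+1-p}}{1+\alpha_{d+1-p}},
\]
where I used $d-n=m$. Substituting this into $m-n\apsi_{d+1-p}$ and simplifying yields
\[
m-n\apsi_{d+1-p}=\frac{(m+n)\alpha_{d+1-p}}{1+\alpha_{d+1-p}}=\frac{d\,\alpha_{d+1-p}}{1+\alpha_{d+1-p}}.
\]

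Next, I would plug this expression into the first equality of Corollary \ref{cor:starred_belpha_via_psis}, namely $(1+\beta_p^\ast)(m-n\apsi_{d+1-p})=d$. The factor of $d$ cancels, leaving
\[
(1+\beta_p^\ast)\,\alpha_{d+1-p}=1+\alpha_{d+1-p},
\]
and subtracting $\alpha_{d+1-p}$ from both sides gives $\alpha_{d+1-p}\beta_p^\ast=1$, which is the first identity.

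For the second identity, I would invoke the transposition symmetry spelled out in Section \ref{sec:transposed}: every statement about $\Theta$ involving the quantities $(n,m,\beta_p,\alpha_p,\bpsi_p,\apsi_p,\ldots)$ becomes a valid statement about $\tr\Theta$ after swapping $n\leftrightarrow m$ and attaching stars to the Diophantine quantities. Applying this symmetry to the identity $\alpha_{d+1-p}\beta_p^\ast=1$ just proved immediately yields $\alpha_{d+1-p}^\ast\beta_p=1$. The only mild subtlety I anticipate is the bookkeeping in step one, making sure the algebraic identity $d-n=m$ is used in the correct place so that the factor $(m+n)=d$ appears cleanly; everything else is straightforward substitution.
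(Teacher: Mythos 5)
Your proposal is correct and takes essentially the same route as the paper, whose proof is the one-line remark that the corollary follows from Proposition \ref{prop:belpha_via_psis} and Corollary \ref{cor:starred_belpha_via_psis}; your computation of $m-n\apsi_{d+1-p}=d\,\alpha_{d+1-p}/(1+\alpha_{d+1-p})$ is exactly the intended algebra. Deriving the second identity via the $\Theta\leftrightarrow\tr\Theta$ symmetry rather than by reusing the second equalities of the cited results is an immaterial variation.
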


  \begin{proof}
    Follows from Proposition \ref{prop:belpha_via_psis} and Corollary \ref{cor:starred_belpha_via_psis}.
  \end{proof}

%  \[ (1+\beta_p^\ast)(d-n(1+\apsi_{d+1-p}))=d \]
%  \[ (1+\beta_p^\ast)(1-\frac{1}{1+\alpha_{d+1-p}})=1 \]
%  \[ (1+\beta_p^\ast)\frac{\alpha_{d+1-p}}{1+\alpha_{d+1-p}}=1 \]
%  \[ \alpha_{d+1-p}\beta_p^\ast=1 \]
%  \[ \alpha_{d+1-p}^\ast\beta_p=1 \]

  In order to obtain the corresponding relations between the exponents of the second type, let us go in the opposite direction and prove

  \begin{proposition} \label{prop:starred_equals_nonstarred}
    We have
    \[ \gb_p=\gb_{d-p}^\ast\quad\text{ and }\quad\ga_p=\ga_{d-p}^\ast. \]
  \end{proposition}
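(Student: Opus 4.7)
The plan is to combine Propositions \ref{prop:ba_via_Psis} and \ref{prop:starred_ba_via_starred_Psis} with Mahler's polar-reciprocal duality for successive minima. Rewriting Proposition \ref{prop:ba_via_Psis} as $1+\gb_p=d/(n(\varkappa_p+\bPsi_p))$ and Proposition \ref{prop:starred_ba_via_starred_Psis} as $1+\gb_{d-p}^\ast=d/(m(\varkappa_{d-p}^\ast+\bPsi_{d-p}^\ast))$, the desired equality $\gb_p=\gb_{d-p}^\ast$ becomes equivalent to
\[ n\varkappa_p+n\bPsi_p=m\varkappa_{d-p}^\ast+m\bPsi_{d-p}^\ast, \]
with an analogous reduction involving $\aPsi_p,\aPsi_{d-p}^\ast$ for the uniform exponents. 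A direct case split on whether $p\leq m$ or $p\geq m$ gives $n\varkappa_p=\min(np,m(d-p))=m\varkappa_{d-p}^\ast$, so it remains only to prove the symmetric identities $n\bPsi_p=m\bPsi_{d-p}^\ast$ and $n\aPsi_p=m\aPsi_{d-p}^\ast$.

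For this I would invoke the classical Mahler inequality $\lambda_p(M,\La)\lambda_{d+1-p}(M^\ast,\La^\ast)\asymp 1$, applied to $M=D_{\pmb\tau(s)}\cB_\infty^d$, whose polar reciprocal is $M^\ast=D_{-\pmb\tau(s)}\cB_\infty^d$. Comparing \eqref{eq:path} with \eqref{eq:path_ast} yields the key identification $-\pmb\tau(s)=\pmb\tau^\ast(s^\ast)$ under the reparametrization $s^\ast=ms/n$, so $M^\ast$ coincides with the parallelepiped governing the starred Schmidt--Summerer setup at parameter $s^\ast$. Taking logarithms and dividing by $s$, the Mahler inequality becomes
\[ \psi_p(\La,\gT,s)+\tfrac{m}{n}\,\psi_{d+1-p}^\ast(\La^\ast,\gT^\ast,s^\ast)=O(1/s). \]
Summing over $p=1,\ldots,p_0$, reindexing the starred side via $j=d+1-p$, and using Proposition \ref{prop:mink} to absorb the vanishing term $\Psi_d^\ast(s^\ast)=O(1/s^\ast)$ gives
\[ \Psi_{p_0}(\La,\gT,s)=\tfrac{m}{n}\,\Psi_{d-p_0}^\ast(\La^\ast,\gT^\ast,s^\ast)+O(1/s). \]

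Finally, taking $\liminf$ and $\limsup$ as $s\to+\infty$ (equivalently $s^\ast\to+\infty$) of both sides produces the desired identities $n\bPsi_p=m\bPsi_{d-p}^\ast$ and $n\aPsi_p=m\aPsi_{d-p}^\ast$, which feed back through Propositions \ref{prop:ba_via_Psis} and \ref{prop:starred_ba_via_starred_Psis} to yield $\gb_p=\gb_{d-p}^\ast$ and $\ga_p=\ga_{d-p}^\ast$. The only mildly delicate point is the bookkeeping around the reparametrization $s^\ast=ms/n$ and the sign identification $-\pmb\tau(s)=\pmb\tau^\ast(s^\ast)$; both are immediate from the explicit definitions, so there is no real obstacle beyond keeping track of indices.
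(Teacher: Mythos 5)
Your argument is correct, but it runs in exactly the opposite direction to the paper's, and the paper even flags this: it announces that it will ``go in the opposite direction'' and proves Proposition \ref{prop:starred_equals_nonstarred} directly, then \emph{deduces} the relation $\bPsi_p^\ast=\frac nm\bPsi_{d-p}$ (Corollary \ref{cor:starred_equals_nonstarred}) from it. The paper's proof is purely algebraic and exact: since the bases $\pmb\ell_1,\dots,\pmb\ell_m,\vec e_{m+1},\dots,\vec e_d$ and $\vec e_1,\dots,\vec e_m,\pmb\ell_{m+1},\dots,\pmb\ell_d$ are dual, the Hodge star sends $\vec L_\sigma\wedge\vec E_{\sigma'}$ to $\pm\vec E_{\overline\sigma}\wedge\vec L_{\overline\sigma'}$ and maps $\wedge^p(\Z^d)$ onto $\wedge^{d-p}(\Z^d)$, so the system \eqref{eq:ba_modified} defining $\gb_p,\ga_p$ is term-by-term the system defining $\gb_{d-p}^\ast,\ga_{d-p}^\ast$ — no implied constants, no limits. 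You instead prove the $\Psi$-identity first, via Mahler's duality $\lambda_p(M,\La)\lambda_{d+1-p}(M^\ast,\La^\ast)\asymp1$ applied pointwise to $\cB(s)$, the identification $-\pmb\tau(s)=\pmb\tau^\ast(ms/n)$, summation over $p$, and Proposition \ref{prop:mink}; this mirrors precisely how the paper handles the first-type exponents in Section \ref{sec:transposed} (Proposition \ref{prop:starred_psis_via_psis} and its corollaries). Your reduction $n\varkappa_p=m\varkappa_{d-p}^\ast$ is right, there is no circularity (Propositions \ref{prop:ba_via_Psis} and \ref{prop:starred_ba_via_starred_Psis} do not depend on the statement being proved), and you correctly sum the pointwise $O(1/s)$ relations \emph{before} taking $\liminf$ and $\limsup$ — the one spot where a careless argument at the level of the exponents themselves would break, since $\liminf$ is not additive. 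Two minor caveats: the polar reciprocal of $D_{\pmb\tau(s)}\cB_\infty^d$ is a cross-polytope $D_{-\pmb\tau(s)}\cB_1^d$, so your identification with $D_{-\pmb\tau(s)}\cB_\infty^d$ holds only up to constants depending on $d$ (harmless, and the paper makes the same normalization), and one must note that the starred successive minima are taken with respect to the dual lattice $\La^\ast=\tr T\Z^d$, which is exactly the lattice the paper attaches to $\tr\Theta$, so Mahler's theorem applies in the form you need. What the paper's route buys is an exact equivalence of the two Diophantine definitions; what yours buys is uniformity with the first-type case and no extra multilinear algebra beyond what Proposition \ref{prop:ba_via_Psis} already required.
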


  \begin{proof}
    Let $\vec L_\sigma$, $\vec E_\sigma$, $\cJ_k$, $\cJ'_k$ be as in Section \ref{sec:laurexp}.

    We remind that the bases $\pmb\ell_1,\ldots,\pmb\ell_m,\vec e_{m+1},\ldots,\vec e_d$ and $\vec e_1,\ldots,\vec e_m,\pmb\ell_{m+1},\ldots,\pmb\ell_d$ are dual. So, if $\sigma\in\cJ_k$, $\sigma'\in\cJ'_{k'}$, then
    \[ \ast(\vec L_\sigma\wedge\vec E_{\sigma'})=\pm\vec E_{\overline\sigma}\wedge\vec L_{\overline\sigma'}, \]
    where $\ast$ denotes the Hodge star operator,
    \[ \overline\sigma=\{1,\ldots,m\}\backslash\sigma,\qquad\overline\sigma'=\{m+1,\ldots,d\}\backslash\sigma', \]
    and the sign depends on the parity of the corresponding permutation. Hence for any $\sigma\in\cJ_k$, $\sigma'\in\cJ'_{d-p-k}$, and any $\vec Z\in\wedge^p(\Z^d)$ we have
    \[ |\vec L_\sigma\wedge\vec E_{\sigma'}\wedge\vec Z|=|\vec E_{\overline\sigma}\wedge\vec L_{\overline\sigma'}\wedge\ast\vec Z|. \]
    Thus,
    \begin{equation} \label{eq:max_equals_hodged_max}
      \max_{\begin{subarray}{c} \sigma\in\cJ_k \\ \sigma'\in\cJ'_{d-p-k} \end{subarray}}
      |\vec L_\sigma\wedge\vec E_{\sigma'}\wedge\vec Z|=
      \max_{\begin{subarray}{c} \sigma'\in\cJ'_{p-m+k} \\ \sigma\in\cJ_{m-k} \end{subarray}}
      |\vec L_{\sigma'}\wedge\vec E_\sigma\wedge\ast\vec Z|,
    \end{equation}
    for each $\vec Z\in\wedge^p(\Z^d)$.

    Set $k_0^\ast=\max(0,n-(d-p))$, $k_1^\ast=\min(n,p)$. Then $k_0^\ast=k_0+p-m$, $k_1^\ast=k_1+p-m$, and the inequality $k_0\leq k\leq k_1$ is equivalent to $k_0^\ast\leq p-m+k\leq k_1^\ast$. Therefore, it follows from \eqref{eq:max_equals_hodged_max} that \eqref{eq:ba_modified} is equivalent to
    \begin{equation} \label{eq:ba_hodged}
      \max_{\begin{subarray}{c} \sigma'\in\cJ'_k \\ \sigma\in\cJ_{p-k} \end{subarray}}
      |\vec L_{\sigma'}\wedge\vec E_\sigma\wedge\ast\vec Z|\leq t^{1-(k-k_0^\ast)(1+\gamma)},\qquad k=k_0^\ast,\ldots,k_1^\ast.
    \end{equation}

    It remains to apply Proposition \ref{prop:ba_substitution} and the fact that $\ast(\wedge^p(\Z^d))=\wedge^{d-p}(\Z^d)$.
  \end{proof}

  \begin{corollary} \label{cor:starred_ba_via_Psis}
    Set $\varkappa_p^{\ast\ast}=\min(d-p,\frac mnp)=\frac mn\varkappa_p^\ast$. Then
    \[ (1+\gb_p^\ast)(\varkappa_p^{\ast\ast}+\bPsi_{d-p})=(1+\ga_p^\ast)(\varkappa_p^{\ast\ast}+\aPsi_{d-p})=d/n. \]
  \end{corollary}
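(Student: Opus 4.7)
The plan is to combine Proposition \ref{prop:starred_equals_nonstarred} with Proposition \ref{prop:ba_via_Psis}, applied to the index $d-p$ in place of $p$. By Proposition \ref{prop:starred_equals_nonstarred} we have $\gb_p^\ast=\gb_{d-p}$ and $\ga_p^\ast=\ga_{d-p}$, which immediately converts any identity involving $\gb_{d-p}$ or $\ga_{d-p}$ into one involving the starred exponents $\gb_p^\ast$ or $\ga_p^\ast$.

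Next I would apply Proposition \ref{prop:ba_via_Psis} at index $d-p$, obtaining
\[ (1+\gb_{d-p})(\varkappa_{d-p}+\bPsi_{d-p})=(1+\ga_{d-p})(\varkappa_{d-p}+\aPsi_{d-p})=d/n. \]
So the only thing left to check is that the constant $\varkappa_{d-p}$ produced by Proposition \ref{prop:ba_via_Psis} coincides with the constant $\varkappa_p^{\ast\ast}$ in the statement. This is a direct computation: by definition of $\varkappa$,
\[ \varkappa_{d-p}=\min\!\left(d-p,\ \tfrac{m}{n}\bigl(d-(d-p)\bigr)\right)=\min\!\left(d-p,\ \tfrac{m}{n}p\right)=\varkappa_p^{\ast\ast}, \]
which matches the identity $\varkappa_p^{\ast\ast}=\tfrac mn\varkappa_p^\ast$ recorded in the corollary (since $\varkappa_p^\ast=\min(p,\tfrac nm(d-p))$). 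Substituting these equalities into the displayed identity above yields
\[ (1+\gb_p^\ast)(\varkappa_p^{\ast\ast}+\bPsi_{d-p})=(1+\ga_p^\ast)(\varkappa_p^{\ast\ast}+\aPsi_{d-p})=d/n, \]
which is exactly the claim.

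There is no real obstacle: the entire content of the corollary is bookkeeping between starred and unstarred quantities once the duality result of Proposition \ref{prop:starred_equals_nonstarred} is in hand. The only point that benefits from an explicit verification is the compatibility of the normalizing constants, which is the short computation above.
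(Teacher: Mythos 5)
Your proposal is correct and is precisely the argument the paper intends: the paper's own proof reads ``Follows from Propositions \ref{prop:ba_via_Psis} and \ref{prop:starred_equals_nonstarred},'' and you have simply made explicit the substitution $p\mapsto d-p$ and the verification that $\varkappa_{d-p}=\min(d-p,\tfrac mn p)=\varkappa_p^{\ast\ast}$. Nothing further is needed.
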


  \begin{proof}
    Follows from Propositions \ref{prop:ba_via_Psis} and \ref{prop:starred_equals_nonstarred}.
  \end{proof}

  \begin{corollary} \label{cor:starred_equals_nonstarred}
    We have
    \[ \bPsi_p^\ast=\dfrac nm\bPsi_{d-p}\quad\text{ and }\quad\aPsi_p^\ast=\dfrac nm\aPsi_{d-p}\,. \]
  \end{corollary}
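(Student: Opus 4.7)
The plan is to read the desired identities directly out of the two formulas already in hand for the starred regular (respectively uniform) Diophantine exponents of the second type: Proposition \ref{prop:starred_ba_via_starred_Psis} writes $(1+\gb_p^\ast)(\varkappa_p^\ast+\bPsi_p^\ast)=d/m$, while Corollary \ref{cor:starred_ba_via_Psis} writes $(1+\gb_p^\ast)(\varkappa_p^{\ast\ast}+\bPsi_{d-p})=d/n$. Since both identities pin down the same factor $(1+\gb_p^\ast)$, eliminating it must force an algebraic relation between $\bPsi_p^\ast$ and $\bPsi_{d-p}$, and the hope is that after using $\varkappa_p^{\ast\ast}=\tfrac{m}{n}\varkappa_p^\ast$ this relation collapses to exactly the statement of the corollary.

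The execution is a two-line calculation. First I would divide the two equations to obtain $\varkappa_p^\ast+\bPsi_p^\ast=\tfrac{n}{m}\bigl(\varkappa_p^{\ast\ast}+\bPsi_{d-p}\bigr)$. Then I would invoke the identity $\varkappa_p^{\ast\ast}=\tfrac{m}{n}\varkappa_p^\ast$ already recorded in Corollary \ref{cor:starred_ba_via_Psis}, which makes the two $\varkappa_p^\ast$ terms on the two sides cancel and leaves $\bPsi_p^\ast=\tfrac{n}{m}\bPsi_{d-p}$. The uniform statement follows by running the same argument verbatim with $\gb_p^\ast$ replaced by $\ga_p^\ast$ and $\bPsi$ replaced by $\aPsi$ throughout.

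The only genuine sanity check before dividing is to make sure neither of the factors $1+\gb_p^\ast$, $1+\ga_p^\ast$ vanishes; this is immediate from Remark \ref{rem:initial_inequalities} applied to the transposed system, which gives $\gb_p^\ast\geq\ga_p^\ast\geq\tfrac{d}{m\varkappa_p^\ast}-1>-1$, so both factors are strictly positive. I do not anticipate any real obstacle: Proposition \ref{prop:starred_ba_via_starred_Psis} and Corollary \ref{cor:starred_ba_via_Psis} already encapsulate all the geometric input (Mahler's compound-body theory and the Hodge star reformulation of Proposition \ref{prop:starred_equals_nonstarred}), and the corollary is then pure arithmetic bookkeeping on the multiplicative link between Diophantine and Schmidt–Summerer exponents.
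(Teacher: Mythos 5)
Your proposal is correct and is exactly the paper's argument: the corollary is stated there with the one-line proof ``Follows from Proposition \ref{prop:starred_ba_via_starred_Psis} and Corollary \ref{cor:starred_ba_via_Psis}'', and your elimination of the common factor $(1+\gb_p^\ast)$ together with $\varkappa_p^{\ast\ast}=\tfrac mn\varkappa_p^\ast$ is the intended bookkeeping. The positivity check on $1+\gb_p^\ast$ and $1+\ga_p^\ast$ is a nice touch the paper leaves implicit.
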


  \begin{proof}
    Follows from Proposition \ref{prop:starred_ba_via_starred_Psis} and Corollary \ref{cor:starred_ba_via_Psis}.
  \end{proof}

  \section{Main results in terms of Schmidt--Summerer's exponents} \label{sec:results_schmidted}

  It is interesting to rewrite \eqref{eq:dyson_transference} in terms of Schmidt--Summerer's exponents. By Propositions \ref{prop:starred_equals_nonstarred} and \ref{prop:ba_via_Psis} it becomes simply
  \begin{equation} \label{eq:Psi_very_dyson}
    \bPsi_{d-1}\leq\frac{\bPsi_1}{d-1}\,,
  \end{equation}
  which is one of the statements of Corollary \ref{cor:Psi_dyson}. But we already have an intermediate variant of this inequality! It is
  \begin{equation} \label{eq:Psi_inter_very_dyson}
    \frac{\bPsi_{p+1}}{d-p-1}\leq\frac{\bPsi_p}{d-p}\,,
  \end{equation}
  one of the statements of Corollary \ref{cor:Psi_inter_dyson}. Rewriting the corresponding statements of Corollary \ref{cor:Psi_inter_dyson} with $\La$ and $\gT$ defined by \eqref{eq:La}, \eqref{eq:path} in terms of intermediate Diophantine exponents gives Theorem \ref{t:inter_dyfel}.

  As we see, describing the splitting of Dyson's and Apfelbeck's inequalities in terms of Schmidt--Summerer's exponents given by Corollary \ref{cor:Psi_inter_dyson} is much more elegant, than in terms of Diophantine exponents. Its another attraction is its universality for all values of $n$, $m$ whose sum is equal to $d$. Moreover, Corollary \ref{cor:Psi_inter_dyson} holds actually for arbitrary lattices and paths, while Theorem \ref{t:inter_dyfel} is bound to the specific choice of those.

  Let us now translate Theorems \ref{t:inter_loranoyadenie}, \ref{t:inter_my_inequalities} into the language of Schmidt's exponents. We remind that, as we noticed in Remark \ref{rem:initial_inequalities},
  \[ -1\leq\bPsi_1\leq\aPsi_1\leq0. \]

  Theorem \ref{t:inter_loranoyadenie} turns into

  \begin{theorem} \label{t:inter_loranoyadenie_Psied}
    Suppose that the space of integer solutions of \eqref{eq:the_system} is not a one-dimensional lattice. Then
    \begin{equation} \label{eq:inter_loranoyadenie_Psied}
      \bPsi_2\leq
      \begin{cases}
        2\bPsi_1+d\cdot\dfrac{\aPsi_1-\bPsi_1}{n+n\aPsi_1}\,,\quad\text{ if }\ \aPsi_1\neq-1, \\
        2\bPsi_1+d\cdot\dfrac{\aPsi_1-\bPsi_1 \vphantom{\frac{\big|}{}} }{m-n\aPsi_1}\,.
      \end{cases}
    \end{equation}
  \end{theorem}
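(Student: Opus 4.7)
The plan is to deduce Theorem \ref{t:inter_loranoyadenie_Psied} as a purely algebraic translation of Theorem \ref{t:inter_loranoyadenie} using Proposition \ref{prop:ba_via_Psis}. The Diophantine hypothesis (that the space of integer solutions is not a one-dimensional lattice) is inherited unchanged from Theorem \ref{t:inter_loranoyadenie}, and no new number-theoretic input is needed beyond what is used to prove that theorem.

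First I would evaluate the normalizing constants. A short check shows $\varkappa_1 = \min(1, m(d-1)/n) = 1$ for all admissible $m, n$, while $\varkappa_2 = 2$ precisely when $m \geq 2$, and $\varkappa_2 = (d-2)/n$ when $m = 1$. Proposition \ref{prop:ba_via_Psis} then reads
\[
1 + \gb_p \,=\, \frac{d}{n(\varkappa_p + \bPsi_p)}, \qquad 1 + \ga_p \,=\, \frac{d}{n(\varkappa_p + \aPsi_p)},
\]
so that an inequality of the form $\gb_2 \geq X$ with $1 + X > 0$ is equivalent to $\bPsi_2 \leq d/(n(1+X)) - \varkappa_2$.

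The computational heart of the translation is the identity
\[
\gb_1 - \ga_1 \,=\, \frac{d(\aPsi_1 - \bPsi_1)}{n(1+\bPsi_1)(1+\aPsi_1)},
\]
which follows from expanding $(m - n\bPsi_1)(1+\aPsi_1) - (m - n\aPsi_1)(1+\bPsi_1) = d(\aPsi_1 - \bPsi_1)$. Armed with this, I would apply the equivalence above to the three bounds of Theorem \ref{t:inter_loranoyadenie}. For $m \geq 2$ and $\aPsi_1 \neq -1$, the choice $X = (\ga_1 - 1)/(2 + \gb_1 - \ga_1)$ yields $1 + X = (1 + \gb_1)/(2 + \gb_1 - \ga_1)$, and therefore
\[
\frac{d}{n(1+X)} - 2 \,=\, (2 + \gb_1 - \ga_1)(1 + \bPsi_1) - 2 \,=\, 2\bPsi_1 + \frac{d(\aPsi_1 - \bPsi_1)}{n(1+\aPsi_1)},
\]
which is the first branch of \eqref{eq:inter_loranoyadenie_Psied}. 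Taking instead $X = (1 - \ga_1^{-1})/(\gb_1^{-1} + \ga_1^{-1}) = \gb_1(\ga_1 - 1)/(\gb_1 + \ga_1)$ and running an analogous simplification produces the second branch.

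The case $m = 1$ needs a separate verification because the relevant Diophantine bound $X = (\gb_1 + \ga_1)/(1 - \ga_1)$ looks different and $\varkappa_2 = (d-2)/n \neq 2$. I expect to show that the same target formulas nonetheless emerge: expanding $(1 - \ga_1)(1 + \bPsi_1) - (d-2)/n$ and using $d = n+1$ together with the identity above should reproduce the right-hand side of \eqref{eq:inter_loranoyadenie_Psied} term by term, the discrepancy $2 - \varkappa_2 = d/n$ being exactly what compensates the different starting bound. The main obstacle is the bookkeeping: keeping track of signs, ensuring $1 + X > 0$ in each computation (otherwise the original inequality is vacuous in view of $\gb_2 \geq -1$ from Remark \ref{rem:initial_inequalities}), and handling the boundary case $\aPsi_1 = -1$, equivalent to $\ga_1 = \infty$, which is precisely what triggers the switch from the first to the second branch of the statement.
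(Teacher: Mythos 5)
Your proposal is circular within the logic of this paper. You propose to obtain Theorem \ref{t:inter_loranoyadenie_Psied} as an algebraic translation of Theorem \ref{t:inter_loranoyadenie} via Proposition \ref{prop:ba_via_Psis}; but Theorem \ref{t:inter_loranoyadenie} is itself one of the new results announced in Section \ref{sec:results}, and the paper's proof of it consists precisely in establishing Theorem \ref{t:inter_loranoyadenie_Psied} first (Section \ref{sec:proofs}) and then translating back. The two theorems are indeed equivalent reformulations --- your computation of $\varkappa_1=1$, $\varkappa_2$, and the identity $\gb_1-\ga_1=d(\aPsi_1-\bPsi_1)/\bigl(n(1+\bPsi_1)(1+\aPsi_1)\bigr)$ is correct and verifies this equivalence --- but for $m\geq2$ the Diophantine-exponent statement is not available from the prior literature (only the $m=1$ case is Laurent--Bugeaud's), so you cannot take it as an input. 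What your proposal supplies is the dictionary between the two languages, not a proof of either statement.

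The missing substance is the geometric argument. The paper proves the theorem directly for the lattice $\La$ and path $\gT$ of \eqref{eq:La}, \eqref{eq:path}: the key is Lemma \ref{l:main}, which compares $\psi_2(s)$ with $\psi_1(s)$ and $\psi_1(s')$ whenever $\lambda_1(\cB(s))\cB(s)\subseteq\lambda_1(\cB(s'))\cB(s')$ and $\lambda_1(\cB(s'))=\lambda_2(\cB(s'))$. One then shows (Corollary \ref{cor:for_inter_loranoyadenie}) that at every local minimum $s$ of $\psi_1$, where $\mu_s(\vec v_s)=\nu_s(\vec v_s)=\lambda_1(\cB(s))$, such parameters $s'\leq s\leq s''$ exist, by inflating $\lambda_1(\cB(s))\cB(s)$ in the last $n$ (resp.\ first $m$) coordinates until a second, non-collinear lattice point appears on the boundary; the bounds $s(1+\psi_1(s))\leq s'$ and $s''\leq s(1-(n/m)\psi_1(s))$ together with \eqref{eq:tending_to_infty} guarantee $s',s''\to\infty$, after which one passes to $\liminf$ and $\limsup$ along the local minima. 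None of this parallelepiped-inflation mechanism, which is where the hypothesis that the solution space is not a one-dimensional lattice actually enters, appears in your outline, so the proposal as it stands does not prove the theorem.
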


  Theorem \ref{t:inter_my_inequalities} turns into

  \begin{theorem} \label{t:inter_my_inequalities_Psied}
    We have
    \begin{equation} \label{eq:inter_my_inequalities_Psied}
      \aPsi_2\leq
      \begin{cases}
        \dfrac{(d-2)\aPsi_1}{(n-1)+n\aPsi_1}\,,\quad\text{ if }\ \aPsi_1\geq\dfrac{m-n}{2n}\,, \\
        \dfrac{(d-2)\aPsi_1 \vphantom{\frac{\big|}{}} }{(m-1)-n\aPsi_1}\,,\quad\text{ if }\ \aPsi_1\leq\dfrac{m-n}{2n}\,.
      \end{cases}
    \end{equation}
  \end{theorem}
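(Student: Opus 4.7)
The plan is to deduce Theorem \ref{t:inter_my_inequalities_Psied} directly from Theorem \ref{t:inter_my_inequalities} by unpacking the dictionary between Diophantine and Schmidt--Summerer exponents supplied by Propositions \ref{prop:belpha_via_psis} and \ref{prop:ba_via_Psis}; no new Diophantine input is needed. Using $\alpha_1 = \ga_1$ and $\apsi_1 = \aPsi_1$, Proposition \ref{prop:belpha_via_psis} yields
\[
\ga_1 \;=\; \frac{m-n\aPsi_1}{n(1+\aPsi_1)},
\]
and a short calculation shows that $\ga_1 \leq 1$ (respectively $\ga_1 \geq 1$) corresponds to $\aPsi_1 \geq (m-n)/(2n)$ (respectively $\aPsi_1 \leq (m-n)/(2n)$), which is exactly the case split appearing in Theorem \ref{t:inter_my_inequalities_Psied}.

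Next I would compute the constant $\varkappa_2 = \min(2,\,m(d-2)/n)$ entering Proposition \ref{prop:ba_via_Psis}. Thanks to the identity $m(d-2) - 2n = (m-2)(m+n)$, one has $\varkappa_2 = 2$ for $m \geq 2$ and $\varkappa_2 = (n-1)/n$ for $m=1$. In either case Proposition \ref{prop:ba_via_Psis} expresses $\ga_2$ as a strictly decreasing function of $\aPsi_2$ (granted that $\varkappa_2 + \aPsi_2 > 0$, which follows from Remark \ref{rem:initial_inequalities}), so substituting these formulas into the lower bounds for $\ga_2$ given by Theorem \ref{t:inter_my_inequalities} reduces the proof to straightforward algebraic manipulation.

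The heart of the work is then clearing denominators and isolating $\aPsi_2$. The only delicate point is sign-tracking: to flip the inequality correctly one needs $(n-1)+n\aPsi_1>0$ under the first case condition and $(m-1)-n\aPsi_1>0$ under the second. Both follow from the respective case hypothesis together with $\aPsi_1 \geq -1$; for instance in the first case
\[
(n-1)+n\aPsi_1 \;\geq\; (n-1) + \tfrac{m-n}{2} \;=\; \tfrac{d-2}{2} \;>\; 0.
\]
As a sanity check, at the boundary $\aPsi_1 = (m-n)/(2n)$ both formulas for $\aPsi_2$ yield the common value $(m-n)/n$, so the two cases glue continuously.

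The main obstacle is thus purely computational --- careful bookkeeping of signs and cross-multiplications --- rather than conceptual. A minor subtlety concerns the $m=1$ regime: there the second case of Theorem \ref{t:inter_my_inequalities} has vanishing numerator $m-1=0$ and carries no Diophantine content, which is mirrored by the second case of Theorem \ref{t:inter_my_inequalities_Psied} collapsing to $\aPsi_2 \leq -(n-1)/n = -\varkappa_2$, forced into equality by the lower bound $\aPsi_2 \geq -\varkappa_2$ of Remark \ref{rem:initial_inequalities}.
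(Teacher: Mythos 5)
Your translation dictionary is correct as far as it goes: the formula $\ga_1=\frac{m-n\aPsi_1}{n(1+\aPsi_1)}$, the identification of the case split $\ga_1\lessgtr1$ with $\aPsi_1\gtrless\frac{m-n}{2n}$, and the values of $\varkappa_2$ all check out, and this is precisely the computation by which Section \ref{sec:results_schmidted} asserts that Theorem \ref{t:inter_my_inequalities} ``turns into'' Theorem \ref{t:inter_my_inequalities_Psied}. But your logical direction is backwards, and that makes the argument circular. Theorem \ref{t:inter_my_inequalities} is itself a new main result of this paper (it is the announced splitting of \eqref{eq:my_inequalities_cases}) and has no independent proof anywhere; the paper's architecture is exactly the opposite of yours, namely Theorem \ref{t:inter_my_inequalities_Psied} is proved directly by parametric geometry of numbers in Section \ref{sec:proofs}, and Theorem \ref{t:inter_my_inequalities} is then \emph{deduced} from it via the dictionary. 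By deriving Theorem \ref{t:inter_my_inequalities_Psied} from Theorem \ref{t:inter_my_inequalities} you assume the very content you are asked to establish.

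The substantive work your proposal omits is the following. With $\La$ and $\gT$ fixed by \eqref{eq:La}, \eqref{eq:path}, one proves Lemma \ref{l:main}: if $\lambda_1(\cB(s))\cB(s)\subseteq\lambda_1(\cB(s'))\cB(s')$ and $\lambda_1(\cB(s'))=\lambda_2(\cB(s'))$, then $\psi_2(s)$ admits an explicit upper bound in terms of $\psi_1(s)$ and $\psi_1(s')$, with separate expressions for $s'\leq s$ and $s'\geq s$. Corollary \ref{cor:for_inter_my_inequalities} then produces, for every $s$, such an $s'$ by inflating the first-minimum box $\lambda_1(\cB(s))\cB(s)$ in the $\mu_s$- or $\nu_s$-direction until non-collinear lattice points appear on its boundary, combines the resulting bound with the sandwich \eqref{eq:psi_sandwiched_by_Psi} from Corollary \ref{cor:precise_inter_dyfel}, and sorts the outcome by the sign of $2-\frac{d}{n+n\psi_1(s')}$ to land in the correct branch of \eqref{eq:for_inter_my_inequalities}; finally one takes $\limsup$ over $s$, using \eqref{eq:tending_to_infty} to guarantee $s'\to\infty$. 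None of this is replaced by the algebraic translation. Two smaller points: your reduction also needs $\varkappa_2+\aPsi_2>0$, whereas Remark \ref{rem:initial_inequalities} only gives $\geq0$; and in the degenerate case where \eqref{eq:the_system} has a nonzero integer solution the paper argues via \eqref{eq:Psi_inter_dyson} rather than via Theorem \ref{t:inter_my_inequalities}, whose hypotheses say nothing in that situation.
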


  As we see, this point of view relieves us of singling out the case $m=1$. In the next Section we prove Theorems \ref{t:inter_loranoyadenie_Psied}, \ref{t:inter_my_inequalities_Psied}.

  \section{Proof of Theorems \ref{t:inter_loranoyadenie_Psied}, \ref{t:inter_my_inequalities_Psied}} \label{sec:proofs}

  Let $\La$ and $\gT$ be fixed by \eqref{eq:La} and \eqref{eq:path}. The following observation is the crucial point for proving Theorems \ref{t:inter_loranoyadenie_Psied}, \ref{t:inter_my_inequalities_Psied}.

  \begin{lemma} \label{l:main}
    Suppose $s,s'\in\R_+$ satisfy the conditions
    \begin{equation} \label{eq:main_subseteq}
      \lambda_1(\cB(s))\cB(s)\subseteq\lambda_1(\cB(s'))\cB(s'),
    \end{equation}
    \begin{equation} \label{eq:main_equals}
      \lambda_1(\cB(s'))=\lambda_2(\cB(s')).
    \end{equation}
    Then
    \begin{equation} \label{eq:main}
      \psi_2(s)\leq
      \begin{cases}
        \psi_1(s)+d\cdot\dfrac{\psi_1(s')-\psi_1(s)}{n+n\psi_1(s')}\,,\quad\text{ if }\ s'\leq s\ \text{ and }\ \psi_1(s')\neq-1, \\
        \psi_1(s)+d\cdot\dfrac{\psi_1(s')-\psi_1(s) \vphantom{\frac{\big|}{}} }{m-n\psi_1(s')}\,,\quad\text{ if }\ s'\geq s.
      \end{cases}
    \end{equation}
  \end{lemma}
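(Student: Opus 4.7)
The plan. Let $\vec v$ be a lattice vector attaining the first minimum of $\cB(s)$, so $|\vec v|_{\cB(s)}=\lambda_1(\cB(s))$, and write $\mu=\lambda_1(\cB(s'))$. The inclusion \eqref{eq:main_subseteq} forces $\vec v\in\mu\cB(s')$; meanwhile \eqref{eq:main_equals} says $\mu\cB(s')$ contains two linearly independent lattice vectors, and since these cannot both be proportional to $\vec v$, at least one of them, call it $\vec u$, is linearly independent from $\vec v$. Thus $\vec v$ and $\vec u$ are two linearly independent lattice vectors, both sitting in $\mu\cB(s')$. For the specific path \eqref{eq:path}, $\tau_i(s')-\tau_i(s)=s'-s$ for $i\leq m$ and $-m(s'-s)/n$ for $i>m$, so
\[
\mu\cB(s')\subseteq\mu\cdot\max\bigl(e^{s'-s},\,e^{m(s-s')/n}\bigr)\cdot\cB(s),
\]
and consequently $\lambda_2(\cB(s))\leq\mu\cdot\max\bigl(e^{s'-s},e^{m(s-s')/n}\bigr)$.

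Taking logarithms and dividing by $s$, with $\rho:=s'/s$, converts this into a bound on $\psi_2(s)$ which is linear in $\rho$: when $s'\leq s$ the second argument of the maximum dominates, yielding
\[
\psi_2(s)\leq\tfrac{m}{n}+\rho\bigl(\psi_1(s')-\tfrac{m}{n}\bigr),
\]
and when $s'\geq s$ the first argument dominates, yielding
\[
\psi_2(s)\leq\rho\bigl(1+\psi_1(s')\bigr)-1.
\]
Next I would pin down $\rho$ using \eqref{eq:main_subseteq} itself. For $s'\leq s$ the binding coordinates of that inclusion are the first $m$ (they dilate more strongly with $s$), which gives $s(1+\psi_1(s))\leq s'(1+\psi_1(s'))$; under the hypothesis $\psi_1(s')\neq-1$ in the regime $1+\psi_1(s')>0$ this rearranges to $\rho\geq(1+\psi_1(s))/(1+\psi_1(s'))$. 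For $s'\geq s$ the binding coordinates are the last $n$, giving $s(m/n-\psi_1(s))\geq s'(m/n-\psi_1(s'))$; Minkowski's first theorem applied to $\cB(\cdot)$ (of volume $2^d$ in the unimodular lattice $\La$) forces $\psi_1\leq0<m/n$, so both sides are positive and $\rho\leq(m/n-\psi_1(s))/(m/n-\psi_1(s'))$.

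Finally, the linear-in-$\rho$ bound has coefficient $\psi_1(s')-m/n<0$ in the first branch and $1+\psi_1(s')\geq 0$ in the second, so in each branch the worst choice of $\rho$ is exactly the endpoint extracted above. Substituting that endpoint into the corresponding linear bound and clearing denominators (using $m+n=d$) reproduces precisely the two fractions appearing in \eqref{eq:main}. The only real technical step is this last algebraic verification, which is routine but requires careful bookkeeping of signs; the conceptual heart of the argument is the production of the independent pair $\vec v,\vec u$ inside $\mu\cB(s')$ and the observation that \eqref{eq:main_subseteq} singles out precisely one coordinate direction on which it is binding, and hence pins down the admissible range of $\rho=s'/s$.
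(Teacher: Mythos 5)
Your argument is correct and follows essentially the same route as the paper's: the paper likewise extracts from \eqref{eq:main_subseteq} the binding coordinate inequality that pins down $s'/s$ (the first $m$ coordinates when $s'\leq s$, the last $n$ when $s'\geq s$) and from \eqref{eq:main_equals} the bound $\lambda_2(\cB(s))\leq\lambda_1(\cB(s'))\max\bigl(e^{s'-s},e^{m(s-s')/n}\bigr)$, then combines the two exactly as you do, and the algebra you defer does check out. If anything, your one-sided inequality for $\rho$ is slightly cleaner than the paper's, which writes an equality for $s(1+\psi_1(s))$ versus $s'(1+\psi_1(s'))$ that the hypotheses alone only yield in one direction --- the direction you use, and the only one needed given the signs of the coefficients.
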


  \begin{proof}
    Suppose that $s'\leq s$. Then it follows from \eqref{eq:main_subseteq} and \eqref{eq:main_equals} that
    \[ \lambda_1(\cB(s))e^s=\lambda_1(\cB(s'))e^{s'}\geq1 \]
    and
    \[ \lambda_2(\cB(s))e^{-ms/n}\leq\lambda_2(\cB(s'))e^{-ms'/n}=\lambda_1(\cB(s'))e^{-ms'/n}, \]
    i.e.
    \begin{equation} \label{eq:s_prime_small_via_s}
      s(1+\psi_1(s))=s'(1+\psi_1(s'))\geq0
    \end{equation}
    and
    \begin{equation} \label{eq:s_prime_small_ps_2_leq_psi_1}
      s(\psi_2(s)-m/n)\leq s'(\psi_1(s')-m/n)
    \end{equation}
    Combining \eqref{eq:s_prime_small_via_s} and \eqref{eq:s_prime_small_ps_2_leq_psi_1} we get the first inequality of \eqref{eq:main}.
%    \[ \psi_2(s)\leq\frac{m+m\psi_1(s')+(1+\psi_1(s))(n\psi_1(s')-m)}{n+n\psi_1(s')} \]
%    \[ \psi_2(s)\leq\frac{d\psi_1(s')-d\psi_1(s)+n\psi_1(s)+n\psi_1(s)\psi_1(s')}{n+n\psi_1(s')} \]

    Suppose now that $s'\geq s$. Then it follows from \eqref{eq:main_subseteq} and \eqref{eq:main_equals} that
    \[ \lambda_1(\cB(s))e^{-ms/n}=\lambda_1(\cB(s'))e^{-ms'/n}<1 \]
    and
    \[ \lambda_2(\cB(s))e^s\leq\lambda_2(\cB(s'))e^{s'}=\lambda_1(\cB(s'))e^{s'}, \]
    i.e.
    \begin{equation} \label{eq:s_prime_large_via_s}
      s(\psi_1(s)-m/n)=s'(\psi_1(s')-m/n)<0
    \end{equation}
    and
    \begin{equation} \label{eq:s_prime_large_ps_2_leq_psi_1}
      s(1+\psi_2(s))\leq s'(1+\psi_1(s'))
    \end{equation}
    Combining \eqref{eq:s_prime_large_via_s} and \eqref{eq:s_prime_large_ps_2_leq_psi_1} we get the second inequality of \eqref{eq:main}.
  \end{proof}

  For each $\vec z=\tr{(z_1,\ldots,z_d)}\in\R^d$ and each $s>0$ let us set
  \[ \mu_s(\vec z)=e^{-s}\max_{1\leq i\leq m}|z_i|\qquad\text{ and }\qquad\nu_s(\vec z)=e^{ms/n}\max_{m<i\leq d}|z_i|. \]
  Then
  \[ \cB(s)=\Big\{ \vec z\in\R^d \,\Big|\, \mu_s(\vec z)\leq1,\ \nu_s(\vec z)\leq1 \Big\}. \]
  The parallelepiped $\lambda_1(\cB(s))\cB(s)$ contains no non-zero points of $\La$ in its interior and contains at least one pair of such points in its boundary. Of these points let us choose an arbitrary point and denote it by $\vec v_s$. Obviously, the maximal of the quantities $\mu_s(\vec v_s)$, $\nu_s(\vec v_s)$ equals $\lambda_1(\cB(s))$.

  \begin{corollary} \label{cor:for_inter_loranoyadenie}
    For each $s>0$, such that
    \begin{equation} \label{eq:for_inter_loranoyadenie_condition}
      \mu_s(\vec v_s)=\nu_s(\vec v_s)=\lambda_1(\cB(s)),
    \end{equation}
    there are $s',s''>0$, such that
    \[ s(1+\psi_1(s))\leq s'\leq s\leq s''\leq s(1-(n/m)\psi_1(s)) \]
%    \[ \psi_1(s')\geq\psi_1(s)>-1,\qquad\psi_1(s'')\geq\psi_1(s)>-1, \]
    and
    \begin{equation} \label{eq:for_inter_loranoyadenie}
      \Psi_2(s)\leq
      \begin{cases}
        2\psi_1(s)+d\cdot\dfrac{\psi_1(s')-\psi_1(s)}{n+n\psi_1(s')}\,,\quad\text{ if }\ \psi_1(s')\neq-1, \\
        2\psi_1(s)+d\cdot\dfrac{\psi_1(s'')-\psi_1(s) \vphantom{\frac{\big|}{}} }{m-n\psi_1(s'')}\,.
      \end{cases}
    \end{equation}
  \end{corollary}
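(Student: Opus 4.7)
The plan is to construct two auxiliary parameters $s', s'' \in (0,\infty)$ with $s' \leq s \leq s''$, each satisfying the hypotheses \eqref{eq:main_subseteq} and \eqref{eq:main_equals} of Lemma \ref{l:main} paired with $s$; applying that lemma to each and then adding $\psi_1(s)$ to both sides (using $\Psi_2(s) = \psi_1(s) + \psi_2(s)$) produces the two branches of \eqref{eq:for_inter_loranoyadenie}. The hypothesis \eqref{eq:for_inter_loranoyadenie_condition}, which says that $\vec v_s$ saturates both the $\mu$-face and the $\nu$-face of $\lambda_1(\cB(s))\cB(s)$, is precisely what permits moving in either direction from $s$ while keeping $\vec v_s$ as a first-minimum witness.

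For $s^* \leq s$, condition \eqref{eq:for_inter_loranoyadenie_condition} gives $\mu_{s^*}(\vec v_s) = e^{s-s^*}\lambda_1(\cB(s))$ and $\nu_{s^*}(\vec v_s) = e^{m(s^*-s)/n}\lambda_1(\cB(s))$, so the $\cB(s^*)$-length of $\vec v_s$ equals $e^{s-s^*}\lambda_1(\cB(s))$; hence $\lambda_1(\cB(s^*)) \leq e^{s-s^*}\lambda_1(\cB(s))$, with equality at $s^* = s$. I would define $s'$ to be the infimum of those $s^* \leq s$ for which this estimate is still an equality. At $s^* = s'$ some lattice vector $\vec w$ linearly independent of $\vec v_s$ must also realize $\lambda_1(\cB(s'))$ — otherwise one could push $s^*$ slightly below $s'$ keeping equality, contradicting the definition — which gives \eqref{eq:main_equals}. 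The inclusion \eqref{eq:main_subseteq} follows coordinate-by-coordinate from the equality $\lambda_1(\cB(s'))e^{s'} = \lambda_1(\cB(s))e^s$ (by the defining relation) and from $\lambda_1(\cB(s'))e^{-ms'/n} \geq \lambda_1(\cB(s))e^{-ms/n}$ (because $s' \leq s$). Finally, Remark \ref{rem:initial_inequalities} gives $\lambda_1(\cB(s')) \leq 1$, which after taking logarithms rearranges to $s' \geq s(1+\psi_1(s))$; in particular $s' > 0$ whenever $\psi_1(s) > -1$, which holds for every sufficiently large $s$.

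The construction of $s''$ is symmetric. For $s^* \geq s$ the $\cB(s^*)$-length of $\vec v_s$ equals $e^{m(s^*-s)/n}\lambda_1(\cB(s))$, dominated now by the $\nu$-side, and $s''$ is the supremum of those $s^* \geq s$ for which $\vec v_s$ still realizes $\lambda_1(\cB(s^*))$. The same bookkeeping yields \eqref{eq:main_subseteq} and \eqref{eq:main_equals} for the pair $(s, s'')$, and $\lambda_1(\cB(s'')) \leq 1$ translates into $s'' \leq s(1-(n/m)\psi_1(s))$. Invoking Lemma \ref{l:main} with the pair $(s, s')$ then gives the first branch of \eqref{eq:for_inter_loranoyadenie}, and with $(s, s'')$ the second.

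The main obstacle I expect is the rigorous verification of \eqref{eq:main_equals} at the extremal parameters $s'$ and $s''$: one must rule out the pathological case in which no \emph{new}, linearly independent lattice vector appears as $s^*$ crosses $s'$ (or $s''$). This is handled by observing that the $\cB(s^*)$-length of each fixed nonzero lattice vector is the continuous convex function $\max(e^{-s^*}M,\,e^{ms^*/n}N)$ of $s^*$, so the set of lattice vectors witnessing $\lambda_1(\cB(s^*))$ is finite and can only change at isolated ``crossover'' values of $s^*$; at such a crossover below $s$ a witness linearly independent from $\vec v_s$ is forced to appear, as required.
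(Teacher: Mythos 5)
Your proof is correct and follows essentially the same route as the paper: your one‑parameter family of boxes $e^{s-s^*}\lambda_1(\cB(s))\,\cB(s^*)$ for $s^*\leq s$ (resp.\ $e^{m(s^*-s)/n}\lambda_1(\cB(s))\,\cB(s^*)$ for $s^*\geq s$) is exactly the paper's family $\cP_\nu$ (resp.\ $\cQ_\mu$) in a different parametrization, the extremal parameter at which a linearly independent witness appears is the same, and the bound $\lambda_1\leq 1$ you invoke is the same Minkowski estimate the paper uses to bound $\nu$ and $\mu$. Your continuity/local‑finiteness justification of \eqref{eq:main_equals} at the extremal parameter is in fact slightly more explicit than the paper's.
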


  \begin{proof}
    Let us show that the relation $\mu_s(\vec v_s)=\lambda_1(\cB(s))$ implies the existence of an $s'\leq s$ satisfying the conditions of Lemma \ref{l:main}. Denote $\lambda=\lambda_1(\cB(s))$. Let
    \[ \cP_\nu=\Big\{ \vec z\in\R^d \,\Big|\, \mu_s(\vec z)\leq\lambda,\ \nu_s(\vec z)\leq\nu\lambda \Big\} \]
    be the minimal (w.r.t inclusion) parallelepiped containing no non-zero points of $\La$ in its interior. The existence of such a parallelepiped follows from Minkowski's convex body theorem. It also implies that $1\leq\nu\leq\lambda^{-d/n}$. Then
    \[ \lambda\cB(s)\subseteq\cP_{\nu}=\lambda'\cB(s'), \]
%    \[ \overline\cP_{\nu'}=\Big\{ \vec z\in\R^d \,\Big|\, \max_{1\leq i\leq m}|z_i|\leq\lambda e^{s},\
%                                                          \max_{m<i\leq d}|z_i|\leq\nu\lambda e^{-ms/n} \Big\} \]
%    \[ \lambda'\cB(s')=\Big\{ \vec z\in\R^d \,\Big|\, \max_{1\leq i\leq m}|z_i|\leq\lambda' e^{s'},\
%                                                     \max_{m<i\leq d}|z_i|\leq\lambda' e^{-ms'/n} \Big\} \]
%    \[ \lambda e^{s}=\lambda' e^{s'},\qquad\nu e^{-ms/n}=(\lambda'/\lambda)e^{-ms'/n} \]
%    \[ e^{-ms'/n}=(\lambda'/\lambda)^{m/n}e^{-ms/n},\qquad\nu=(\lambda'/\lambda)^{d/n} \]
    where $\lambda'=\lambda\nu^{n/d}$, $s'=s-(n/d)\ln\nu$. For $\lambda'$, $s'$ we have
    \[ \lambda'\geq\lambda,\qquad s+\ln\lambda\leq s'\leq s. \]
    On the other hand, $\cP_\nu$ contains non-collinear points of $\La$ in its boundary, so $\lambda_1(\cB(s'))=\lambda_2(\cB(s'))=\lambda'$. Thus, $s$, $s'$ satisfy \eqref{eq:main_subseteq}, \eqref{eq:main_equals}.

    Now let us consider the relation $\nu_s(\vec v_s)=\lambda_1(\cB(s))$. By Minkowski's convex body theorem there is a $\mu$ in the interval $1\leq\mu\leq\lambda^{-d/m}$, such that the parallelepiped
    \[ \cQ_\mu=\Big\{ \vec z\in\R^d \,\Big|\, \mu_s(\vec z)\leq\mu\lambda,\ \nu_s(\vec z)\leq\lambda \Big\} \]
    contains no non-zero points of $\La$ in its interior, but contains non-collinear points of $\La$ in its boundary. Then
    \[ \lambda\cB(s)\subseteq\cQ_{\mu}=\lambda''\cB(s''), \]
%    \[ \overline\cQ_{\mu}=\Big\{ \vec z\in\R^d \,\Big|\, \max_{1\leq i\leq m}|z_i|\leq\mu\lambda e^{s},\
%                                                          \max_{m<i\leq d}|z_i|\leq\lambda e^{-ms/n} \Big\} \]
%    $1\leq\mu\leq\lambda^{-d/m}$
%    \[ \lambda'\cB(s')=\Big\{ \vec z\in\R^d \,\Big|\, \max_{1\leq i\leq m}|z_i|\leq\lambda' e^{s'},\
%                                                     \max_{m<i\leq d}|z_i|\leq\lambda' e^{-ms'/n} \Big\} \]
%    \[ \mu e^{s}=(\lambda'/\lambda)e^{s'},\qquad e^{-ms/n}=(\lambda'/\lambda)e^{-ms'/n} \]
%    \[ e^{s'}=(\lambda'/\lambda)^{n/m}e^{s},\qquad\mu=(\lambda'/\lambda)^{d/m} \]
%    $\lambda'=\lambda\mu^{m/d}$, $s'=s+(n/d)\ln\mu$
    where $\lambda''=\lambda\mu^{m/d}$, $s''=s+(n/d)\ln\mu$. For $\lambda''$, $s''$ we have
    \[ \lambda''\geq\lambda,\qquad s\leq s''\leq s-(n/m)\ln\lambda. \]
    Besides that, $s$, $s''$ also satisfy \eqref{eq:main_subseteq}, \eqref{eq:main_equals}, since $\lambda_1(\cB(s''))=\lambda_2(\cB(s''))=\lambda''$.

    It remains to apply Lemma \ref{l:main}.
  \end{proof}

  Having Corollary \ref{cor:for_inter_loranoyadenie}, it is easy now to prove Theorem \ref{t:inter_loranoyadenie_Psied}.

  First, let us notice that if the system \eqref{eq:the_system} has a non-zero integer solution, then it has two linearly independent integer solutions, so in this case $\aPsi_1=\bPsi_1=-1$, $\bPsi_2=-2$, which implies \eqref{eq:inter_loranoyadenie_Psied}.

  Next, let us suppose that the system \eqref{eq:the_system} has no non-zero integer solutions. Then there are infinitely many local minima of $\psi_1(s)$, each of them satisfies \eqref{eq:for_inter_loranoyadenie_condition}, and the sequence of these local minima tends to $\infty$. Moreover, $s'$ and $s''$ from Corollary \ref{cor:for_inter_loranoyadenie} tend to $\infty$ as $s$ tends to $\infty$. Indeed, since \eqref{eq:the_system} has no non-zero integer solutions, we have
  \[ e^{s(1+\psi_1(s))}=e^s\lambda_1(\cB(s))=\lambda_1(e^{-s}\cB(s))\to\infty\ \ \text{ as }\ \ s\to\infty, \]
  so
  \begin{equation} \label{eq:tending_to_infty}
    s(1+\psi_1(s))\to\infty\ \ \text{ as }\ \ s\to\infty.
  \end{equation}
  Particularly, it follows from \eqref{eq:tending_to_infty} that $\psi_1(s)$ is eventually greater than $-1$ (it can actually be shown that $\psi_1(s)>-1$ starting with the second local minimum of $\psi_1(s)$).
  Therefore,
  \begin{equation} \label{eq:for_inter_loranoyadenie_liminfsup}
    \bPsi_2\leq\liminf\Psi_2(s)\leq
    \begin{cases}
      2\liminf\psi_1(s)+d\cdot\limsup\dfrac{\psi_1(s')-\psi_1(s)}{n+n\psi_1(s')}\,, \\
      2\liminf\psi_1(s)+d\cdot\limsup\dfrac{\psi_1(s'')-\psi_1(s) \vphantom{\frac{\big|}{}} }{m-n\psi_1(s'')}\,,
    \end{cases}
  \end{equation}
  where the $\liminf$ and the $\limsup$ are taken over the set of local minima of $\psi_1(s)$. Since $\psi_1(s)$ is never positive, both denominators in \eqref{eq:for_inter_loranoyadenie_liminfsup} are eventually positive. Therefore, \eqref{eq:for_inter_loranoyadenie_liminfsup} implies \eqref{eq:inter_loranoyadenie_Psied}.

  \begin{corollary} \label{cor:for_inter_my_inequalities}
    Suppose that the system \eqref{eq:the_system} has no non-zero integer solutions. Then for each $s>0$ there is an $s'>0$, such that $s(1+\psi_1(s))\leq s'\leq s(1-(n/m)\psi_1(s))$, and
    \begin{equation} \label{eq:for_inter_my_inequalities}
      \Psi_2(s)\leq
      \begin{cases}
        \dfrac{(d-2)\psi_1(s')}{(n-1)+n\psi_1(s')}\,,\quad\text{ if }\ \psi_1(s')\geq\dfrac{m-n}{2n}\,, \\
        \dfrac{(d-2)\psi_1(s') \vphantom{\frac{\big|}{}} }{(m-1)-n\psi_1(s')}\,,\quad\text{ if }\ \psi_1(s')\leq\dfrac{m-n}{2n}\,.
      \end{cases}
    \end{equation}
  \end{corollary}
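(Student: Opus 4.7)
The plan is to run exactly the construction from the proof of Corollary \ref{cor:for_inter_loranoyadenie}, but now for an arbitrary $s>0$ rather than only at local minima of $\psi_1$. Given $s>0$, pick a lattice vector $\vec v_s\in\lambda_1(\cB(s))\cB(s)\cap\La$; since $\max(\mu_s(\vec v_s),\nu_s(\vec v_s))=\lambda_1(\cB(s))$, at least one of these two quantities saturates, and accordingly one performs either the $\nu$-expansion (yielding some $s'\leq s$) or the $\mu$-expansion (yielding some $s'\geq s$), exactly as in the proof of Corollary \ref{cor:for_inter_loranoyadenie}. Either way the resulting $s'$ lies in the stated interval $[s(1+\psi_1(s)),\,s(1-(n/m)\psi_1(s))]$, and the hypotheses of Lemma \ref{l:main} hold for the pair $(s,s')$.

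Next I would apply Lemma \ref{l:main} and substitute the identity that comes with the construction -- namely $s(1+\psi_1(s))=s'(1+\psi_1(s'))$ in the $s'\leq s$ case, and $s(m/n-\psi_1(s))=s'(m/n-\psi_1(s'))$ in the $s'\geq s$ case. A short calculation collapses both situations into the unified bound
\[ \Psi_2(s)\leq\frac{m-n}{n}\left(1-\frac{s'}{s}\right)+2\,\frac{s'}{s}\,\psi_1(s'), \]
which is affine in $t:=s'/s$ with slope $2\psi_1(s')-(m-n)/n$. This is the main computational content of the argument; I would verify it by direct substitution and by noticing that the same expression emerges in both cases.

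The final step is to remove the $t$-dependence by using the extreme values of $t$ compatible with the construction. The universal bound $\psi_1(s)\leq0$ (which follows from $\vol\cB(s)=2^d$ and Minkowski's convex body theorem), together with the identities of the previous paragraph, yields $t\leq 1/(1+\psi_1(s'))$ in the $s'\leq s$ case; similarly $\psi_1(s)\geq-1$ gives $t\leq d/(m-n\psi_1(s'))$ in the $s'\geq s$ case. The sign of $2\psi_1(s')-(m-n)/n$ then determines which endpoint realises the maximum of the affine function displayed above: the $\nu$-expansion combined with $\psi_1(s')\geq(m-n)/(2n)$ produces Formula~1, while the $\mu$-expansion combined with $\psi_1(s')\leq(m-n)/(2n)$ produces Formula~2, in each case after an elementary comparison of rational expressions.

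The step I expect to be the main obstacle is verifying the consistency of this case split: one has to check that for every $s$ the construction that is actually available (determined by which of $\mu_s(\vec v_s)$, $\nu_s(\vec v_s)$ saturates at $\lambda_1(\cB(s))$) produces an $s'$ whose $\psi_1(s')$ lands in the correct regime. This reduces to careful sign-tracking, making use of the fact that the $\psi_1(s')$ obtained from either construction necessarily lies in $[\psi_1(s),0]$ (a consequence of the explicit parametrisations $s'=s-(n/d)\ln\nu^\ast$ and $s''=s+(n/d)\ln\mu^\ast$ from the proof of Corollary \ref{cor:for_inter_loranoyadenie}, combined with $\psi_1(s)\leq 0$), so that the regime condition in \eqref{eq:for_inter_my_inequalities} is compatible with the forced choice of construction.
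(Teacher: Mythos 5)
Your construction of $s'$ and the ``unified bound''
\[ \Psi_2(s)\leq\frac{m-n}{n}\Bigl(1-\frac{s'}{s}\Bigr)+2\,\frac{s'}{s}\,\psi_1(s') \]
are both correct (the latter is just Lemma \ref{l:main} with $\psi_1(s)$ eliminated via \eqref{eq:s_prime_small_via_s}, resp.\ \eqref{eq:s_prime_large_via_s}). The gap is in the endgame. Which expansion is available is decided by which of $\mu_s(\vec v_s)$, $\nu_s(\vec v_s)$ attains $\lambda_1(\cB(s))$, and this is completely independent of whether $\psi_1(s')$ lies above or below $(m-n)/(2n)$; the ``consistency of the case split'' you hope to establish by sign-tracking is simply false, since $\psi_1(s')$ can be anywhere in $[\psi_1(s),0]$ for either expansion. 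In the two mismatched cases your affine function is maximised at the endpoint you have not controlled. Concretely: for the $\mu$-expansion with $\psi_1(s'')\geq(m-n)/(2n)$ the slope is nonnegative, so you need $f(t_{\max})$ with $t_{\max}=d/(m-n\psi_1(s''))$, which evaluates to $\bigl((m+3n)\psi_1(s'')-(m-n)\bigr)/\bigl(m-n\psi_1(s'')\bigr)$; for $m<n$ and $\psi_1(s'')$ near $0$ this is \emph{positive} and cannot imply Formula~1. Symmetrically, for the $\nu$-expansion with $\psi_1(s')\leq(m-n)/(2n)$ you need a lower bound on $t$, and the only ones available are $t\geq 0$ (giving $f(0)=(m-n)/n$, positive when $m>n$) or $t\geq1+\psi_1(s)$, which reintroduces $\psi_1(s)$.

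The missing ingredient is Corollary \ref{cor:precise_inter_dyfel}: $\tfrac{d-1}{d-2}\Psi_2(s)\leq\psi_1(s)\leq\tfrac12\Psi_2(s)$. The paper keeps $\psi_1(s)$ in the bound \eqref{eq:corollary_n} and eliminates it by this two-sided estimate, choosing the side according to the sign of the coefficient $2-d/(n+n\psi_1(s'))$; solving the resulting self-referential inequality for $\Psi_2(s)$ gives, from the $\nu$-expansion \emph{alone}, $\Psi_2(s)\leq2\psi_1(s')$ when $\psi_1(s')\geq(m-n)/(2n)$ and Formula~2 otherwise. The first of these is then upgraded to Formula~1 by the elementary comparison $2\psi_1(s')\leq(d-2)\psi_1(s')/\bigl((n-1)+n\psi_1(s')\bigr)$, valid exactly because $\psi_1(s')\leq0$ and $\psi_1(s')\geq(m-n)/(2n)$. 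The $\mu$-expansion is treated symmetrically. Thus each expansion by itself yields the full dichotomy \eqref{eq:for_inter_my_inequalities}, and no matching of expansion to regime is required. You should rebuild your final step along these lines (also covering the degenerate value $\psi_1(s')=-1$, where your upper bound $t\leq1/(1+\psi_1(s'))$ and the first formula of \eqref{eq:main} break down but the sandwich inequality settles the claim directly).
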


  \begin{proof}
    Assume that $\mu_s(\vec v_s)=\lambda_1(\cB(s))$. Then the same argument as in the proof of Corollary \ref{cor:for_inter_loranoyadenie} shows that there is an $s'$, such that $s(1+\psi_1(s))\leq s'\leq s$, and
    \begin{equation} \label{eq:corollary_n}
      \Psi_2(s)\leq2\psi_1(s)+d\cdot\dfrac{\psi_1(s')-\psi_1(s)}{n+n\psi_1(s')}\,,
    \end{equation}
    unless $\psi_1(s')=-1$.
    By Corollary \ref{cor:precise_inter_dyfel} we have
    \begin{equation} \label{eq:psi_sandwiched_by_Psi}
      \frac{d-1}{d-2}\Psi_2(s)\leq\psi_1(s)\leq\frac 12\Psi_2(s).
    \end{equation}
    If $\psi_1(s')=-1$, then \eqref{eq:psi_sandwiched_by_Psi} implies \eqref{eq:for_inter_my_inequalities}. Suppose that $\psi_1(s')\neq-1$. Then, taking into account that
    \begin{equation*} %\label{eq:positiveness_n}
      2-\frac{d}{n+n\psi_1(s')}\geq0\quad\text{ if and only if }\quad\psi_1(s')\geq\frac{m-n}{2n}\,,
    \end{equation*}
    we conclude from \eqref{eq:corollary_n} and \eqref{eq:psi_sandwiched_by_Psi} that
    \begin{equation} \label{eq:if_s_prime_is_less_than_s}
      \Psi_2(s)\leq
      \begin{cases}
        \qquad\ 2\psi_1(s')\,,\qquad\quad\,\ \text{ if }\ \psi_1(s')\geq\dfrac{m-n}{2n}\,, \\
        \dfrac{(d-2)\psi_1(s')}{(m-1)-n\psi_1(s')}\,,\quad\text{ if }\ \psi_1(s')\leq\dfrac{m-n \vphantom{\frac{\big|}{}} }{2n}\,.
      \end{cases}
    \end{equation}

    Assume now that $\nu_s(\vec v_s)=\lambda_1(\cB(s))$. Then the same argument as in the proof of Corollary \ref{cor:for_inter_loranoyadenie} shows that there is an $s''$, such that $s\leq s''\leq s(1-(n/m)\psi_1(s))$, and
    \begin{equation} \label{eq:corollary_m}
      \Psi_2(s)\leq2\psi_1(s)+d\cdot\dfrac{\psi_1(s'')-\psi_1(s)}{m-n\psi_1(s'')}\,.
    \end{equation}
    Taking into account that
    \begin{equation*} %\label{eq:positiveness_m}
      2-\frac{d}{m-n\psi_1(s'')}\geq0\quad\text{ if and only if }\quad\psi_1(s'')\leq\frac{m-n}{2n}\,,
    \end{equation*}
    we conclude from \eqref{eq:corollary_m} and \eqref{eq:psi_sandwiched_by_Psi} that
    \begin{equation} \label{eq:if_s_prime_is_greater_than_s}
      \Psi_2(s)\leq
      \begin{cases}
        \dfrac{(d-2)\psi_1(s'')}{(n-1)+n\psi_1(s'')}\,,\quad\text{ if }\ \psi_1(s'')\geq\dfrac{m-n}{2n}\,, \\
        \qquad\ 2\psi_1(s'')\,,\qquad\quad\ \text{ if }\ \psi_1(s'')\leq\dfrac{m-n \vphantom{\frac{\big|}{}} }{2n}\,.
      \end{cases}
    \end{equation}

    Since $\psi_1(s')$ and $\psi_1(s'')$ are negative, we have
    \[ 2\psi_1(s')\leq\dfrac{(d-2)\psi_1(s')}{(n-1)+n\psi_1(s')}\,,\qquad\text{ if }\ \psi_1(s')\geq\dfrac{m-n}{2n}\,, \]
    and
    \[ 2\psi_1(s'')\leq\dfrac{(d-2)\psi_1(s'')}{(m-1)-n\psi_1(s'')}\,,\quad\text{ if }\ \psi_1(s'')\leq\dfrac{m-n}{2n}\,. \]
    Therefore, \eqref{eq:if_s_prime_is_less_than_s} and \eqref{eq:if_s_prime_is_greater_than_s} imply the desired statement.
  \end{proof}

  Deriving Theorem \ref{t:inter_my_inequalities_Psied} from Corollary \ref{cor:for_inter_my_inequalities} is even easier than deriving Theorem \ref{t:inter_loranoyadenie_Psied} from Corollary \ref{cor:for_inter_loranoyadenie}.

  If the system \eqref{eq:the_system} has a non-zero integer solution, then $\aPsi_1=-1<\frac{m-n}{2n}$\,, and \eqref{eq:inter_my_inequalities_Psied} follows from \eqref{eq:Psi_inter_dyson}. Suppose now that \eqref{eq:the_system} has no non-zero integer solutions. Then it follows from \eqref{eq:tending_to_infty} that $s'$ from Corollary \ref{cor:for_inter_my_inequalities} tends to $\infty$ as $s$ tends to $\infty$. Hence, taking $\limsup$ of both sides in \eqref{eq:for_inter_my_inequalities}, we get \eqref{eq:inter_my_inequalities_Psied}.

  \section*{Acknowledgements}

  The author would like to thank the Department of Mathematics of the University of York for the warm welcome during his research visit in February -- March 2011, for this was the time when this paper was conceived.

\vskip 10mm

\noindent
Oleg N. {\sc German} \\
Moscow Lomonosov State University \\
Vorobiovy Gory, GSP--1 \\
119991 Moscow, RUSSIA \\
\emph{E-mail}: {\fontfamily{cmtt}\selectfont german@mech.math.msu.su, german.oleg@gmail.com}

\end{document}